\numberwithin{equation}{section}
\numberwithin{figure}{section}
\newcommand{\R}{\mathbb{R}}
\newcommand{\N}{\mathbb{N}}
\mathchardef\emptyset="001F
\newtheorem{theorem}{Theorem}[section]
\newtheorem{lemma}[theorem]{Lemma}
\newtheorem{remark}[theorem]{Remark}
\newtheorem{definition}[theorem]{Definition}
\newtheorem{proposition}[theorem]{Proposition}
\newtheorem{notation}[theorem]{Notation}
\newtheorem{corollary}[theorem]{Corollary}
\numberwithin{equation}{section}
\newcommand{\up}{\uparrow}
\newcommand{\down}{\downarrow}
\newcommand{\weaksto}{\rightharpoonup^*}
\newcommand{\QED}{\mbox{}\hfill\rule{5pt}{5pt}\medskip\par}
 \def\calE{{\mathcal E}} \def\calF{{\mathcal F}}
  \def\calI{{\mathcal I}}
\def\calJ{{\mathcal J}}  
\def\calP{{\mathcal P}}  \def\calR{{\mathcal R}}
 \def\calW{{\mathcal W}} 
  \def\rmC{{\mathrm C}}
\def\rmD{{\mathrm D}}  
\def\rmJ{{\mathrm J}}
\def\dd{\;\!\mathrm{d}} 
\newcommand{\pairing}[4]{ \sideset{_{ #1 }}{_{ #2 }}  {\mathop{\langle #3 , #4
\rangle}}}
\newcommand{\eps}{\varepsilon}
\newcommand{\teta}{\vartheta}
\newcommand{\foraa}{\text{for a.a.\ }}
\newcommand{\Xs}{X}
 \newcommand{\AC}{\mathrm{AC}}
   \newcommand{\BV}{\mathrm{BV}}
   \newcommand{\BD}{\mathrm{BD}}
   \newcommand{\M}{\mathrm{M}}
   \newcommand{\spbv}[4]{\BV_{#1}([#2,#3];#4)}
\newcommand{\mdn}{\mathsf{d}}
\newcommand{\mds}[3]{\mathsf{d}_{#1}(#2,#3)}
\newcommand{\mdsn}[1]{\mathsf{d}_{#1}}
\newcommand{\mdss}[4]{\tilde{\mathsf{d}}^{#1}_{#2}(#3,#4)}
\newcommand{\ene}[2]{\mathcal{E}(#1,#2)}
\newcommand{\rene}[2]{\mathcal{I}(#1,#2)}
\newcommand{\renen}{\calI}
\newcommand{\enet}[3]{\mathcal{E}(#1,#2,#3)}
\newcommand{\enetk}[4]{\mathcal{E}_{#1}(#2,#3,#4)}
\newcommand{\pert}[2]{\mathcal{F}(#1,#2)}
\newcommand{\pertt}[3]{\mathcal{F}(#1,#2, #3)}
\newcommand{\pertot}[2]{\mathcal{F}_0(#1,#2)}
\newcommand{\pertok}[3]{\mathcal{F}_{0,#1}(#2,#3)}
\newcommand{\pertokname}[1]{\mathcal{F}_{0,#1}}
\newcommand{\pwn}{\partial_t \calE}
\newcommand{\pwt}[3]{\partial_t \calE(#1,#2,#3)}
\newcommand{\cmdn}{\mathsf{D}}
\newcommand{\cmdsn}[1]{\mathsf{D}_{#1}}
\newcommand{\cmds}[3]{\mathsf{D}_{#1}(#2,#3)}
\newcommand{\corrn}{\delta}
\newcommand{\corrsn}[1]{\delta_{#1}}
\newcommand{\corrs}[3]{\delta_{#1}(#2,#3)}
\newcommand{\Vars}[3]{\mathrm{Var}_{#1}(#2,#3)}
\newcommand{\Vari}[4]{\mathrm{Var}_{#1}(#2,[#3,#4])}
\newcommand{\Varname}[1]{\mathrm{Var}_{#1}}
\newcommand{\lli}[2]{{#1}({#2}{-})}
\newcommand{\lri}[2]{{#1}({#2}{\pm})}
\newcommand{\rli}[2]{{#1}({#2}{+})}
\newcommand{\jump}[1]{\mathrm{J}_{#1}}
\newcommand{\Jvar}[4]{\mathrm{Jmp}_{#1}(#2;[#3,#4])}
\newcommand{\vecostname}{\mathsf{c}}
\newcommand{\vecost}[3]{\mathsf{c}(#1,#2,#3)}
\newcommand{\bvcostname}{\mathsf{v}}
\newcommand{\bvcost}[3]{\mathsf{v}(#1,#2,#3)}
\newcommand{\idelta}[4]{\Delta_{#1}(#2,#3,#4)}
\newcommand{\aidelta}[5]{\Delta_{#1}(#2,#3,#4,#5)}
\newcommand{\RIS}{(\Xs,\calE,\mdsn{\spz})}
\newcommand{\RISK}[1]{(\Xs,\calE_{#1},\mdsn{\spz})}
\newcommand{\VE}{\mathrm{VE}}
\newcommand{\stab}[1]{\mathscr{S}_{#1}}
\newcommand{\rstab}[2]{\mathscr{R}(#1,#2)}
\newcommand{\rstabname}{\mathscr{R}}
\newcommand{\rstabnamek}[1]{\mathscr{R}_{#1}}
\newcommand{\rstabt}[2]{\mathscr{R}(#1,#2)}
\newcommand{\rstabtk}[3]{\mathscr{R}_{#1}(#2, #3)}
\newcommand{\slope}[4]{|\rmD_z {#1}|(#2,#3,#4)}
\newcommand{\hole}[1]{\mathfrak{h}(#1)}
\newcommand{\tcost}[4]{\mathrm{Trc}_{#1}(#2,#3,#4)}
\newcommand{\Gap}[3]{\mathrm{GapVar}_{#1}(#2,#3)}
\newcommand{\nresc}[1]{\mathfrak{s}_{#1}}
\newcommand{\ninvresc}[1]{\mathfrak{t}_{#1}}
\newcommand{\invcur}[1]{\zeta_{#1}}
\newcommand{\invcu}{\zeta}
\newcommand{\newmresc}[1]{\mathfrak{m}_{#1}}
\newcommand{\rresc}[2]{\mathsf{\sigma}_{#1}^{#2}}
\newcommand{\siresc}[2]{\mathsf{\tau}_{#1}^{#2}}
\newcommand{\piecewiseConstant}[2]{\overline{#1}_{\kern-1pt#2}}
\newcommand{\pwc}{\piecewiseConstant}
\newcommand{\ENE}{\mathrm{E}}
\definecolor{dmagenta}{rgb}{0.8,0,0.8}
\definecolor{vgreen}{rgb}{0.1,0.5,0.2}
\newcommand{\vu}{u}
\newcommand{\vz}{z}
\newcommand{\spu}{U}
\newcommand{\spz}{Z}
\newcommand{\Spu}{U}
\newcommand{\Spz}{Z}
\newcommand{\weakto}{\rightharpoonup}
\newcommand{\wsigma}{\stackrel{\sigma}{\to}}
\newcommand{\wsigmaR}{\stackrel{\sigma_{\R}}{\to}}
\newcommand{\wsigmaz}{\stackrel{\sigma_{\spz}}{\to}}
\newcommand{\wsigmau}{\stackrel{\sigma_{\spu}}{\to}}
\newcommand{\redn}{\mathcal{I}}
\newcommand{\rednk}[1]{\redn_{#1}}
\newcommand{\red}[2]{\mathcal{I}(#1,#2)}
\newcommand{\redk}[3]{\mathcal{I}_{#1}(#2,#3)}
\newcommand{\Dir}{{\!\scriptscriptstyle\mathrm{D}}}
\newcommand{\Gdir}{\Gamma_\Dir}
\newcommand\JUMP[1]{\mathchoice
                  {\big[\hspace*{-.3em}\big[#1\big]\hspace*{-.3em}\big]}
                   {[\hspace*{-.15em}[#1]\hspace*{-.15em}]}
                   {[\![#1]\!]}
                   {[\![#1]\!]}}
\newcommand{\GC}{\Gamma_{\!\scriptscriptstyle{\rm C}}}
\newcommand{\Neu}{{\!\scriptscriptstyle\mathrm{N}}}
\newcommand{\Gneu}{\Gamma_\Neu}
\newcommand{\gdir}{\phi_\Dir}
\newcommand{\sfd}{\mdn}
\newcommand{\transp}[1]{{#1}^{\mathsf{T}}}
\newcommand{\btransp}[1]{(#1)^{\mathsf{T}}}
\newcommand{\transpi}[1]{{#1}^{-\mathsf{T}}}
\newcommand{\GLD}{\mathrm{GL}^+(d)}
\newcommand{\dom}{\mathrm{dom}}
\newcommand{\SLD}{\mathrm{SL}(d)}
\newcommand{\qf}{q_F}
\newcommand{\Kirchx}[2]{\mathbb K(#1,#2)}
\newcommand{\argmin}{\mathrm{Argmin}}
\newcommand{\domene}[1]{\mathrm{D}_{#1}}
\newcommand{\inpow}[4]{\Kirchx{#1}{\nabla{#2} (t,#3)\nabla {#3}{#4}}}
\newcommand{\ds}[3]{{#1}_{#2}^{#3}}
\newcommand{\bbC}{\mathbb{C}}
\newcommand{\bbM}{\mathbb{M}}
\newcommand{\sym}{\mathrm{sym}}
\newcommand{\dev}{\mathrm{dev}}
\newcommand{\ojt}[3]{\teta_{#1,#2}^{#3}}
\begin{document}

\title[Visco-Energetic solutions and applications]{Visco-Energetic solutions to some rate-independent systems in  damage, delamination,  and plasticity}

\date{March 09, 2018}

\author{Riccarda Rossi}
\address{DIMI, Universit\`a di
  Brescia, via Valotti 9, I--25133 Brescia, Italy.}
\email{riccarda.rossi\,@\,unibs.it}

\thanks{ R.R.\   acknowledges
  support from the institute  IMATI (CNR), Pavia.}

\begin{abstract}
This paper revolves around a  newly introduced weak solvability concept for rate-independent systems,
alternative to the notions of \emph{Energetic} ($\ENE$) and \emph{Balanced Viscosity} ($\BV$) solutions.
\emph{Visco-Energetic}  ($\VE$) solutions  have been recently obtained by passing to the time-continuous limit in a time-incremental
scheme, akin to that for Energetic solutions, but perturbed by a `viscous' correction term, as
in the case of Balanced Viscosity solutions. However, for Visco-Energetic solutions this viscous correction is
tuned by a fixed parameter. The resulting solution notion turns out to describe a kind of evolution 
 in between   Energetic and  Balanced Viscosity evolution.
\par
In this paper we aim to investigate the application of  $\VE$ solutions to the paradigmatic example of perfect plasticity, and to nonsmooth rate-independent processes in solid mechanics
such as damage and plasticity at finite strains. With the limit passage from adhesive contact to brittle delamination, we also  provide a first result
of Evolutionary Gamma-convergence for $\VE$ solutions. 
The analysis of these applications reveals the wide applicability  of this solution concept and  confirms its intermediate character.
\end{abstract}
\maketitle

\noindent 
\textbf{Keywords:} Rate-independent systems, Visco-Energetic solutions, damage, delamination,  perfect plasticity, finite-strain plasticity. 


\section{Introduction}
In this paper we  explore the application of the newly introduced  concept of \emph{Visco-Energetic} solution to a rate-independent process. 
We address
rate-independent systems in solid mechanics   that can be described in terms of two variables
$(\vu,\vz) \in \spu\times \spz$.  
Typically, $\vu$ is the displacement, or the deformation of the body, whereas $\vz$ is an internal variable specific of  the phenomenon under investigation, in accordance with the theory of \emph{generalized standard materials} by \textsc{Halphen \& Nguyen} \cite{HalNgu75MSG},
 cf.\ also the modeling approach by \textsc{M.\ Fr\'emond} \cite{Fre02}. 
In the class of systems we consider here,  $\vu$ is governed by a \emph{static} balance law (usually the Euler-Lagrange equation for the minimization of the elastic energy), whereas $\vz$ evolves rate-independently. Indeed, when the ambient spaces $\spu$ and $\spz$ have a Banach structure, the equations of interest 
\begin{subequations}
\label{grad-structure}
\begin{align} & 
\label{grad-structure-u}
\rmD_u \enet t{u(t)}{z(t)} =0 \quad \text{ in } \spu^*, \ t \in (0,T), 
\\
\label{grad-structure-z}
\partial \calR(\dot{z}(t)) + & 
\rmD_z \enet t{u(t)}{z(t)} \ni 0 \quad \text{ in } \spz^*, \ t \in (0,T), 
\end{align}
\end{subequations}
feature the derivatives w.r.t.\ $u$ and $z$ of the driving energy functional $\calE: [0,T]\times \spu \times \spz \to (-\infty,\infty]$, and the (convex analysis) subdifferential $\partial \calR: \spz \rightrightarrows \spz^*$ of a convex, $1$-positively homogeneous dissipation potential $\calR: \spz \to [0,\infty]$. 
System \eqref{grad-structure} reflects the ansatz that energy is dissipated through 
changes of the internal variable $z$ only: in particular,  the doubly nonlinear evolution inclusion  \eqref{grad-structure-z} balances the dissipative frictional forces from $\partial \calR(\dot z)$ with the restoring force $\rmD_z \enet t{u}{z}$. 
\par
System \eqref{grad-structure}  is most often only formally written:  the very first issue attached to its analysis is the quest of a proper
weak solvability notion. In fact, the energy $\calE(t,\cdot,\cdot)$  can
be nonsmooth, e.g.\ incorporating indicator terms to ensure suitable physical constraints on the variables $u$ and $z$. 
However, it is
rate-independence  that poses the most significant challenges. Since the dissipation potential $\calR$ has linear growth at infinity, one can in general expect only $\BV$-time regularity for 
 $z$. Thus $z$ may have jumps as a function of time and the pointwise derivative $\dot z$ in the subdifferential inclusion \eqref{grad-structure-z}
need not  be 
defined. This has motivated the development of various weak solution concepts for system \eqref{grad-structure}, suited to the 
poor time regularity of $z$ and, at the same time,
also able to  properly capture the behavior of the system at jumps.
 The latest of these notions, Visco-Energetic solutions, is the focus of this paper. 
Before illustrating it, let us briefly review the two other notions of \emph{Energetic} and \emph{Balanced Viscosity} solutions, with which we shall often compare Visco-Energetic solutions. We refer to \cite{Mielke-Cetraro, MieRouBOOK} for a thorough survey of all the other
weak solvability concepts advanced for rate-independent systems.
\par
From now on,  we will leave the Banach setting and simply assume that 
\begin{compactitem}
\item[-] The state spaces $\spu$ and $\spz$ are
endowed with two topologies $\sigma_\spu$ and $\sigma_\spz$; 
\item[-]
Dissipative mechanisms are mathematically modeled in terms of a 
\emph{dissipation distance} $\mdsn {\spz} $ on $\spz$ (in fact, throughout the paper \emph{extended, asymmetric  quasi-distances}
will be  considered, cf.\ the general setup
introduced in Sec.\ \ref{s:2}); 
 \item[-] The driving energy $\calE(t,\cdot)$  is a $(\sigma_\spu{\times}\sigma_\spz)$-lower semicontinuous functional. 
\end{compactitem}
Henceforth, we will  
often write $X$ in place of $\spu \times \spz $ and 
refer to the triple $\RIS$ 
as a rate-independent system.  On the one hand, this generalized setup comprises the Banach one  of \eqref{grad-structure}, where
 $\mds{\spz}{z}{z'}= \calR(z'{-}z)$.  On the other hand, working in a metric-topological setting
  is  natural in view of the  application to, e.g., fracture, where the state  space  for the crack variable only has  a topological  structure,  or finite-strain plasticity, where dissipation  is  described in terms of a Finsler-type distance reflecting the geometric nonlinearities of the model.
\subsection{Energetic, Balanced Viscosity, and Visco-Energetic solutions at a glance}
 \emph{Energetic} (often abbreviated as $\ENE$) solutions were advanced in  \cite{MieThe99MMRI,MieThe04RIHM}, cf.\ also   the parallel notion of `quasistatic
evolution' in the realm of crack propagation, dating back to \cite{DM-Toa2002}. In the context of the rate-independent system $\RIS$, they can be constructed by 
recursively solving the time-incremental minimization scheme
\begin{equation}
\label{tim-E}
\tag{$\mathrm{IM}_{\ENE}$}
(\ds u\tau n,\ds z\tau n) \in \mathrm{Argmin}_{(u,z)\in \Xs} \left(  \mds{\spz}{\ds z\tau{n-1}}{z} + \enet {\ds t\tau n}{u}z\right), \quad n =1,\ldots, N_\tau\,,
\end{equation}
where $\{ \ds t\tau n\}_{n=0}^{N_\tau}  $ is a partition of $[0,T]$ 
with fineness  $\tau = \max_{n=1,\ldots, N_\tau} (\ds t\tau n{-} \ds t\tau {n-1})$. 
Under suitable conditions on $\calE$, the piecewise constant interpolants $(\pwc Z\tau)_{\tau}$ 
of the discrete solutions $(\ds z\tau n)_{n=1}^{N_\tau}$
converge
as $\tau\down0$
 to an $\ENE$ solution of the 
rate-independent system $\RIS$, namely a 
 curve
 $z\in \mathrm{BV}_{\mdsn \spz}([0,T];\spz)$, together with 
 \begin{equation}
	\label{u-once-for-all}
	u:[0,T]\to \spu,  \text{  an (everywhere defined) measurable selection } u(t) \in \mathrm{Argmin}_{\tilde u \in \spu} \enet t{\tilde u}{z(t)},
	\end{equation} 
  fulfilling 
 \begin{compactitem}
 	\item[-] the \emph{global} stability condition 
	\begin{equation}
\label{globStab}
\tag{$\mathrm{S}$}
\enet t{u(t)}{z(t)} \leq \enet t {u'}{z'} + \mds{\spz}{z(t)}{z'} \quad \text{for all } (u',z') \in \spu \times \spz \text{ and all } t \in [0,T];
\end{equation}
 	\item[-] 
the  `$\ENE$ energy-dissipation' balance for all $t\in [0,T]$
\begin{equation}
\label{enbal-E}
\tag{$\mathrm{E}$}
\enet t{u(t)}{z(t)} + \Vari {\mdsn{\spz}}{z}0{t} = \enet 0{u(0)}{z(0)} +\int_0^t \pwt s{u(s)}{z(s)} \dd s.
\end{equation}
 	\end{compactitem}
 Due to its flexibility, 	the Energetic concept 
	has been successfully  applied  to a wide scope of  problems, see e.g.\ \cite{MieRouBOOK} for a  survey.  
	However, 
it has been observed that, because of compliance with the  	
	 \emph{global} stability  condition 
	\eqref{globStab}, 
	 	$\ENE$ solutions driven by nonconvex 
	energy functionals may have to jump `too early' and `too long', c.f., e.g.,  their characterization for $1$-dimensional rate-independent 
	systems obtained in \cite{RosSav12}. 
	This fact has motivated the introduction of an alternative 
	weak solvability concept,
	pioneered in \cite{EfeMie06RILS}.  The global
	character of \eqref{globStab} in fact stems from the global minimization problem 
	\eqref{tim-E}, whereas a scheme based on \emph{local} minimization would be preferable.
	This localization can be achieved by perturbing 	\eqref{tim-E} by a term
	that penalizes the squared distance from the previous step 
	$\ds z\tau{n-1}$, namely
	\begin{equation}
\label{tim-BV}
\tag{$\mathrm{IM}_{\BV}$}
(\ds u\tau n,\ds z\tau n) \in \mathrm{Argmin}_{(u,z)\in \Xs} \left(  \mds{\spz}{\ds z\tau{n-1}}{z} + 
\frac{ \eps}{2  \tau}  \mdss2{\spz}{\ds z\tau{n-1}}{z}
+ \enet {\ds t\tau n}{u}z\right) \quad \text{for } n=1,\ldots, N_\tau\,.
\end{equation}	
Here, the \emph{viscous correction} $ \mdss2{\spz}{\ds z\tau{n-1}}{z}
$, with $\tilde{\mathsf{d}}_{\spz}$ a  second, possibly different 
distance on $\spz$, is modulated by a parameter $\eps$, vanishing to zero with $\tau$ in  such a way  that 
$\tfrac{\eps}\tau \uparrow \infty$. 
 Under appropriate conditions on $\calE$ (cf.\ \cite{MRS12, MRS13}), the approximate solutions
$(\pwc Z \tau)_{\tau}$ originating from \eqref{tim-BV}  converge as $\tau\down0$  to a 
\emph{Balanced Viscosity}
{$\BV$} solution of  the rate-independent system $\RIS$, namely 
 a curve  $z\in \mathrm{BV}_{\mdsn{\spz}}([0,T];\spz)$, with  $u: [0,T]\to \spu$ as in \eqref{u-once-for-all},
 %
 fulfilling 
	 \begin{compactitem}
 	\item[-] the \emph{local}
	stability condition
 	\begin{equation}
 	\label{loc-stab}
 	\tag{$\mathrm{S}_{\BV}$}
 	\slope \calE t{u(t)}{z(t)} \leq 1 \quad \text{for every } t \in [0,T]\setminus \jump z,
 	\end{equation}
	where $|\rmD_z \calE|$ is the metric slope of $\calE$ w.r.t.\ $z$, i.e.\
	$
	\slope \calE tuz: = \limsup_{w\to z} \frac{(\enet tuz {-} \enet tuw)^+}{\mds{\spz} zw},
	$ and $ \jump z$ the set of jump points of $z$;
 	\item[-] 
	the `$\BV$ energy-dissipation'  balance for all $t\in [0,T]$ 
 	\begin{equation}
 	\label{bv-enbal}
 	\tag{$\mathrm{E}_{\mathrm{BV}}$}
\enet t{u(t)}{z(t)} + \Vari {\mdsn \spz,\bvcostname}{z}0{t} = \enet 0{u(0)}{z(0)}  +\int_0^t \partial_t \enet s{u(s)}{z(s)} \dd s \,.  \quad 
	\end{equation}
 	\end{compactitem} 
	In \eqref{bv-enbal} $ \Varname {\mdsn \spz,\bvcostname}$
is an \emph{augmented notion} of total variation, fulfilling
 $ \Varname {\mdsn \spz,\bvcostname} \geq   \Varname {\mdsn \spz}$
and
	measuring the energy dissipated at a  jump point  $t\in \mathrm{J}_u$ in terms of a Finsler-type cost 
	$\bvcost t{\cdot}{\cdot}$. Without entering into   details, we mention here that  $\bvcost t{\cdot}{\cdot}$
	  records the possible onset of  \emph{viscosity}, hence of  rate-dependence,
   into
  the description of the system behavior at  the jump point $t$, cf.\ also 
  \cite{MRS13} for more details. 
  Because of the \emph{local} character of the stability condition 
  \eqref{loc-stab}, $\BV$ solutions driven by nonconvex energies have mechanically feasible jumps, as shown by their characterization in  \cite{RosSav12}.
  Nonetheless, a crucial requirement underlying the
  Balanced Viscosity concept is that the energy $\calE$ complies with a chain-rule type condition. This  is ultimately related to 
 convexity/regularity properties of $\calE$  and unavoidably restricts the range of   applicability of  $\BV$ solutions.
  \par
   That is why, 
 \emph{Visco-Energetic} ($\VE$) solutions   have recently  been advanced     in  \cite{SavMin16} as a yet  alternative solvability concept for the rate-independent system $\RIS$. 
 The key idea at the core of this novel notion 
 is to broaden the 
 class of admissible
 viscous corrections of the original time-incremental scheme \eqref{tim-E}.
 The quadratic perturbation
 $\tfrac{ \eps}{2  \tau}  \mdss2{\spz}{\ds z\tau{n-1}}{z}$ in scheme \eqref{tim-BV} is in fact replaced by the term
 $\delta_{\spz}(\ds z\tau{n-1},z)$, with $\delta_\spz: \spz \times \spz \to [0,\infty]$ a  general lower semicontinuous functional.  This turns
 \eqref{tim-E} into
 \begin{equation}
 \label{tim:VE-true}
 \tag{$\mathrm{IM}_{\VE}$}
   (\ds u\tau n, \ds z \tau n  ) \in \mathrm{Argmin}_{(u,z) \in \Xs}
   \left(  \enet {\ds t\tau n}{u}{z} + \mds{\spz}{\ds z \tau{n-1}}{\vz} +\delta_{\spz}(\ds z\tau{n-1},z)
   \right), \ n=1,\ldots, N_\tau\,.
 \end{equation}
 For simplicity,
 we shall confine the exposition  in this Introduction to the simpler, but still significant, case in which
$
 \delta_\spz(z,z') = \tfrac{\mu}2 \mdss 2{\spz}z{z'} $ with $\mu>0$ a \emph{fixed} parameter and $\tilde{\mathsf{d}}_\spz$ a (possibly different)
 distance on $\spz$, 
 postponing
 the discussion of the general case to   Sec.\ \ref{s:2}.
 This choice 
  gives rise to the time-incremental minimization scheme 
 \begin{equation}
 \label{tim-VE}
  (\ds u\tau n, \ds z \tau n  ) \in \mathrm{Argmin}_{(u,z) \in \Xs}
   \left(  \enet {\ds t\tau n}{u}{z} + \mds{\spz}{\ds z \tau{n-1}}{\vz} +\frac{\mu}2 \mdss 2{\spz}{\ds z\tau{n-1}}{z}
   \right), \ n=1,\ldots, N_\tau, \ \mu>0 \text{ fixed.}
\end{equation} 
 In \cite[Thm.\ 3.9]{SavMin16} it has been shown that, under suitable conditions (cf.\ 
 Sec.\ \ref{s:2} ahead),
the discrete solutions  $(\pwc Z\tau)_{\tau}$ of
	\eqref{tim-VE}  converge, as $\tau\down 0$, to a $\VE$ solution of  $\RIS$, i.e.\ a curve
 $z\in \mathrm{BV}_{\mdsn \spz}([0,T];\spz)$, together with $u: [0,T]\to \spu$ as in \eqref{u-once-for-all},   fulfilling
 \begin{compactitem}
 	\item[-] the viscously perturbed 
	 stability condition
 	\begin{equation}
 	\label{ve-stab}
 	\tag{$\mathrm{S}_{\mathrm{VE}}$}
 	\enet t{u(t)}{z(t)} \leq \enet t{u'}{z'} + \mds {\spz}{z}{z'} + \frac\mu2 \mdss 2{\spz}{z(t)}{z'} \quad \text{ for all  } (u',z') \in \spu \times \spz \text{ and   all }  t \in [0,T]\setminus \jump z;
 	\end{equation}
 	\item[-] 
	the  `$\VE$-energy-dissipation' balance for all $t\in [0,T]$ 
 	\begin{equation}
 	\label{ve-enbal}
 	\tag{$\mathrm{E}_{\mathrm{VE}}$}
\ene t{u(t)} + \Vari {\mdsn{\spz},\vecostname}{u}0{t} = \ene 0{u(0)} +\int_0^t \partial_t \ene s{u(s)} \dd s \,. 
	\end{equation}
 	\end{compactitem}
	In \eqref{ve-enbal},  dissipation  of energy is described by  the  total variation functional $\mathrm{Var}_{\mdsn{\spz},\vecostname}$,  which differs from the `$\BV$ total variation'  $\mathrm{Var}_{\mdsn{\spz},\bvcostname}$
	in the contributions
	at  jump points. In the $\VE$-concept, the energy dissipated at jumps is in fact `measured' in terms of 
	a new cost function $\vecostname$, obtained by minimizing a suitable transition cost along curves connecting the two end-point $\lli zt$
	and $\rli zt$ of the curve $z$ at $t\in \mathrm{J}_z$,  namely
	\begin{equation}
	\label{ve-cost-intro}
	\begin{aligned}
\vecost {t}{\lli z t }{\rli z t}: = \inf \Big \{   \mathrm{Trc}_{\VE}(t;\teta, E)\, :   \   E \Subset \R, 
  \teta\in \mathrm{C}_{\sigma_\spz,\mathsf{d}_\spz}(E;\spz),  \   \teta(\inf E) = \lli z t ,  \  \teta(\sup E) = \rli z t \Big\}\,.
  \end{aligned}
  \end{equation}
  The transition cost 
    \[
  \mathrm{Trc}_{\VE}(t;\teta, E) :  = 
   \Vars {\mdsn{\spz}}\teta E + \Gap{\delta_{\spz}}\teta E  + \sum_{s\in E{\setminus}\{\sup E\}} \rstabt t{\teta(s)}
   \]
    features  \emph{(i)} the $\mdsn{\spz}$-total variation of the curve $\teta$;  \emph{(ii)}   a quantity related to the `gaps', or `holes', of the set $E$ (which is just  an  arbitrary compact subset of $\R$ and may have a more complicated structure than an interval);  \emph{(iii)}
the residual function  $\mathscr{R} : [0,T]\times \spz \to [0,\infty) $ (defined in \eqref{residual-stability-function} ahead), which
records the violation  of the $\VE$-stability condition, 
as it fulfills
\[
\rstabt tz>0 \text{ if and only if \eqref{ve-stab} does not hold.} 
\]
\par 
Visco-Energetic solutions  are \emph{in between}
Energetic and Balanced Viscosity solutions in several  respects: 
\begin{compactenum}
\item \underline{The   structure of  the solution concept}: On the one hand, the stability condition \eqref{ve-stab}, though perturbed by a viscous correction,
still retains a \emph{global} character, like for $\ENE$ solutions. This globality plays a key role in the
 argument
 for proving convergence of the discrete solutions of \eqref{tim-VE} to a $\VE$-solution. Indeed, 
 as shown in \cite{SavMin16}, 
 once 
 \eqref{ve-stab} is established for the time-continuous limit,
 it is sufficient to check the upper  estimate  $\leq$  to conclude  \eqref{ve-enbal} with an equality sign. In particular, no chain rule for $\calE$ is needed for the energy balance.  On the other hand,
 $\VE$ solutions provide a description of the system behavior at jumps comparable to that of $\BV$ solutions. Indeed,
 optimal jump transitions (i.e., transitions between the two end-points of a jump attaining the  
 $\inf$ in \eqref{ve-cost-intro}),  exist at every jump point. Moreover, they turn out to solve a  minimum problem akin  to the time-incremental minimization scheme
 \eqref{tim:VE-true}, cf.\ \eqref{minimum-jump} ahead. Similarly, optimal jump transitions for $\BV$ solutions solve a (possibly rate-dependent) evolutionary problem
  related to  the scheme  \eqref{tim-BV} they originate from. 
\item \underline{Their characterization for   $1$-dimensional rate-independent systems}: In the $1$-dimensional  setting
it was shown in \cite{Minotti17}
that $\VE$ solutions originating from scheme \eqref{tim-VE} where, in addition,  $\tilde{\mathsf{d}}_\spz = \mdsn{\spz}$,  have a behavior  
strongly dependent on the parameter $\mu>0$. If $\mu$ is above a certain threshold,
$\VE $ solutions exhibit a behavior at jumps akin to that of $\BV$ solutions, cf.\ \cite{Minotti17}. With a `small' $\mu$, the behavior is intermediate between $\ENE$ and $\BV$ solutions.
\item \underline{The singular limits $\mu\down0$ and $\mu \up \infty$}: 
in \cite{RS17},
in  a general metric-topological setting
but, again, with the special viscous correction $\delta_\spz = \tfrac{\mu}2 \mathsf{d}^2_\spz$, $\VE$ solutions have been shown  to converge to
$\ENE$ and $\BV$ solutions as $\mu\down 0$ and $\mu\up\infty$, respectively.
\item \underline{The assumptions for the existence theory}:  Loosely speaking, they turn out to be 
 weaker than for $\BV$ solutions, and 
 stronger
than for $\ENE$ solutions. Therefore, the range of applicability of $\VE$ solutions to rate-independent processes in solid mechanics
is \emph{intermediate} between  the $\ENE$ and the $\BV$ concepts.
\end{compactenum}
\subsection{Our results} 
In this paper
 we are going to demonstrate the in-between character 
 $\VE$ solutions by addressing their application to a rate-independent system for damage, and to a model for finite-strain plasticity; the highly nonlinear and nonsmooth character  of these examples
also shows the flexibility of the $\VE$ concept. 
\par
 In the  case of the damage system, 
the  existence theory for $\ENE$ solutions \cite{MiRou06, ThoMie09DNEM,Thom11QEBV} and for $\BV$ solutions \cite{KnRoZa13VVAR,KnRoZa17,Negri16} 
seems to be well established. With \underline{\bf Theorem \ref{thm:VEdam}} ahead we will 
prove the existence of $\VE$ solutions
by applying the existence result \cite[Thm.\ 3.9]{SavMin16}
to a quite general damage system.
 Our assumptions
 on the constitutive functions of the model and on the problem data
  will  \emph{(i)}
 coincide with the conditions for $\ENE$  solutions in the  case of the 
viscous correction $\delta_\spz = \tfrac{\mu}2 \mathsf{d}^2_\spz$;   \emph{(ii)} turn out to be slightly stronger than those for $\ENE$ solutions
(in particular forcing a stronger gradient regularization for the damage variable), in the case of a `nontrivial' viscous correction $\delta_\spz$ involving a 
distance \emph{different} from the dissipation distance $\mdsn{\spz}$; 
   \emph{(iii)} be definitely weaker than those for $\BV$ solutions, cf.\ also Remark \ref{rmk:inbetw-dam} ahead.
   \par
   The system for rate-independent elastoplasticity at finite strains we are going to address
   has been analyzed from the viewpoint of Energetic solutions in \cite{MaiMi08}, whereas   no result on the existence of $\BV$ solutions seems to be available up to now. In fact, the corresponding, viscously regularized system  has been only recently tackled in  \cite{MRS2018}, where an 
     existence result 
     has been obtained after  considerable regularization of the driving energy functional to ensure the validity of the chain rule. 
   In contrast, as we will see the existence of $\VE$ solutions to the rate-independent finite-strain plasticity system
     can be checked again under the same conditions as for $\ENE $ solutions in the case of a `trivial' viscous correction. In turn,  the `nontrivial' case requires stronger assumptions, cf.\ \underline{\bf Theorem \ref{thm:VEplast}} and Remark \ref{rmk:intermediate-plast} ahead.
   \par
  We are going to examine $\VE$ solutions  from yet another viewpoint, by testing them on the benchmark example of 
  the Prandtl-Reuss system for associative elastoplasticity.
  In  \underline{\bf Theorem \ref{thm:pp}} we are going to show that Visco-Energetic solutions  for that system are indeed Energetic. 
  The key point for our  argument, cf.\ Prop.\ \ref{prop:VE=E}, will be to deduce  that $\VE$ solutions 
  comply with the `Energetic' global stability condition   \eqref{globStab}. 
  Exploiting  the `global' character  of the $\VE$-stability condition,  in fact,   we will be able to prove that $\VE$ solutions
  fulfill  a characterization  of 
  \eqref{globStab}  obtained in \cite{DMDSMo06QEPL},  and ultimately relying
  on the \emph{convex} character of the perfectly plastic system.  
   \par
  Finally, we will tackle the application of $\VE$ solutions to a rate-independent system
  for \emph{brittle} delamination, which can be thought of as a model for fracture on a prescribed 
   surface. 
  Due to the highly nonconvex  and nonsmooth character of the underlying energy functional, 
  the existence results from \cite{SavMin16} do not directly apply. 
  In fact, in 
  \underline{\bf Theorem \ref{th:VE-brittle}}
   the existence of $\VE$ solutions will  be proved by passing to the limit in an approximating system  that models adhesive contact. 
  In this way we will thus provide a first result on the convergence of $\VE$ solutions for systems driven by $\Gamma$-converging energies; 
   our proof will rely on a careful  asymptotic analysis of optimal jump transitions in the adhesive-to-brittle limit passage. 
  In a future paper we plan to address  the issue of \emph{Evolutionary $\Gamma$-convergence} 
  (in the sense of \cite{Mielke-evol}) for $\VE$ solutions in a more systematic and comprehensive way.
\paragraph{\bf Plan of the paper.}  In \underline{Section \ref{s:2}} we shall revisit the theory of $\VE$ solutions from \cite{SavMin16} and  slightly adapt it to processes described in terms of  two variables $(u,z)$ (while  \cite{SavMin16} mostly focused on rate-independent systems in the single variable $z$). 
\underline{Sections \ref{s:perfect-plas}, \ref{s:3}, \ref{s:4}} will be centered  on the applications to perfect plasticity,  damage, and finite-strain plasticity, respectively. Finally, the limit passage in the $\VE$ formulation from adhesive contact to brittle delamination will be addressed in \underline{Section \ref{s:adh2bri}}. 
\begin{notation}
\label{gen-not}
\upshape
Throughout the paper,  we shall use the symbols
$c,\,c',\, C,\,C'$, etc., whose meaning may vary even within the same   line,   to denote various positive constants depending only on
known quantities.
\par
Given a topological space 
 $\mathbf{X}$, we will 
\emph{(i)} denote by $\mathrm{B}([0,T];\mathbf{X}) $ the space of \emph{everywhere} defined and measurable functions $v: [0,T] \to \mathbf{X}$; 
\emph{(ii)}  if 
$(\mathbf{X},\mathbf{d})$ is  a metric space, denote by 
$\mathrm{BV}_{\mathbf{d}}([0,T];\mathbf{X}) $ the space of \emph{everywhere} defined functions $v: [0,T] \to \mathbf{X}$ with bounded variation. 
\par
Finally,  if  $\mathbf{X}$ is  also a  normed  space,  the symbol $\overline{B}_r^{\mathbf{X}}$ will denote  the closed  ball of $\mathbf{X}$ of radius $r>0$, centered at $0$. We will frequently omit the symbol  $\mathbf{X}$ to avoid overburdening notation. 
For the same reason, we will often write  $\| \cdot \|_{\mathbf{X}}$ in place of  
 $\| \cdot \|_{\mathbf{X}^d}$, and,  in place of $\pairing{\mathbf{X}^*}{\mathbf{X}}{\cdot}{\cdot}$, we shall write $\pairing{}{\mathbf{X}}{\cdot}{\cdot}$ (or even  $\pairing{}{}{\cdot}{\cdot}$ when the duality pairing is clear
 from the context or has to be specified later) . 
 \end{notation}
\paragraph{\bf Acknowledgements.}
I am grateful to Giuseppe Savar\'e for sharing  his insight  on Visco-Energetic solutions with me  and for several fruitful discussions, and to Alexander Mielke for various  suggestions on dissipation distances in finite-strain plasticity.
\section{Setup, definition, and existence result for  Visco-Energetic solutions}
\label{s:2}
In this section we recapitulate  
the  basic assumptions and definitions underlying the notion of Visco-Energetic solutions. We draw all concepts from 
\cite{SavMin16}. There, however, the focus was on energies depending on the sole dissipative variable $z$ (which was in fact denoted 
as $u$ in  \cite{SavMin16}), and the case of functionals also depending on the variable at equilibrium $u$ was recovered through a marginal procedure, cf.\  \cite[Sec.\ 4]{SavMin16}.
Here we 
will partially revisit the presentation in \cite{SavMin16} by directly working with energy functionals depending on the \emph{two} variables $(u,z)$.
\subsection{The abstract setup for Visco-Energetic solutions}
\label{ss:2.1}
In what follows we collect the assumptions on the metric-topological setup, on the energy functional, on the dissipation (quasi-)distance, and on the viscous correction, at the core
of the existence theory for $\VE$ solutions. 
\subsubsection{\bf The metric-topological setting}
Throughout the paper we will
denote by $\sigma$ the product topology on $X = \spu \times \spz$  induced by the two topologies $\sigma_\spu$ and $\sigma_\spz$, and 
 by $\sigma_\R$ the topology induced by $\sigma$ on $[0,T]\times \Xs$.
We will  often write $(u_n,z_n) \wsigma (u,z)$ as $n\to\infty$ to signify convergence w.r.t.\ $\sigma$-topology, and  we will use an analogous notation  for  $\sigma_\R$-,
 $\sigma_\spz$-, and $\sigma_\spu$-convergence.
\par The  mechanism of energy dissipation will be described in terms of 
 an \emph{extended}, possibly \emph{asymmetric}  quasi-distance   
\begin{equation}
\label{asymm-dist-z}
\begin{gathered}
 \mdsn{\spz}: \spz \times \spz \to [0,\infty], \quad \text{l.s.c.\ on $\spz\times \spz$,   s.t. }
\begin{cases}
\mds{\spz}{\vz}{\vz} =0, 
\\
 \mds{\spz}{\vz_o}{\vz}<\infty \text{ for some reference point } \vz_o \in \spz,
\\
  \mds{\spz}{\vz}{w} \leq \mds{\spz}{\vz}{\zeta} + \mds{\spz}\zeta{w} \quad \text{for all } \vz, \, \zeta, \,  w \in \spz.
\end{cases}
\end{gathered}
\end{equation}
We say that $ W \subset Z$ is $\mdsn{\spz}$-bounded if $\sup_{w\in X}\mds{\spz}{\vz_o}{w}<\infty$, and that $\mdsn{\spz}$ separates the points of $W$ if 
\[
w,\, w' \in W, \quad \mds{\spz}{w}{w'} =0 \quad \Rightarrow \quad w=w'.
\]
\par
Our 
\underline{first condition} concerns this metric-topological setting:
\begin{description}
\item[$<T>$] We require that  
\begin{subequations}
\begin{align}
\label{top-space}
&
\text{the topological spaces }
(U,\sigma_U) \text{ and } (Z,\sigma_Z)  \text{ are   Hausdorff  and satisfy the first axiom of countability},
\\
&
\label{Souslin}
(U,\sigma_U) \text{ is a Souslin space},
\end{align}
namely the image of a Polish (i.e.\ a separable completely metrizable) space under a continuous mapping.
Furthermore, we impose that 
\begin{equation}
\label{d-z-sep}
\mdsn {\spz} \text{ separates the points of } \spz.
\end{equation}
\end{subequations}
\end{description}
\par
Let us now recall from \cite{SavMin16} the definition of \emph{$(\sigma_\spz,\mdsn{\spz})$-regulated} function, encompassing a crucial property that the Visco-Energetic solution component $z$ shall enjoy at jumps. 
\begin{definition}{\cite[Def.\ 2.3]{SavMin16}}
\label{def:sigma-d-reg}
We call a curve $z:[0,T]\to \spz$ \emph{$(\sigma_\spz,\mdsn{\spz})$}-regulated if for every $t\in [0,T]$ there exist the left- and right-limits of $z$ 
w.r.t.\ $\sigma_\spz$-topology,
i.e.\
\begin{subequations}
\label{sigma-d-reg}
\begin{equation}
\label{sigma-d-reg-a}
\lli zt = \lim_{s\up t} z(s) \quad \text{ in } (\spz,\sigma_\spz), \qquad   \rli zt = \lim_{s\down t} z(s) \quad \text{ in } (\spz,\sigma_\spz)
\end{equation}
(with the convention $\lli z0: = z(0)$ and $\rli zT: = z(T)$), 
also satisfying
\begin{equation}
\label{sigma-d-reg-b}
\begin{aligned}
& \lim_{s\up t} \mds{\spz}{z(s)}{\lli zt} =0,  && \lim_{s\down t} \mds{\spz}{\rli zt}{z(s)} =0,
\\
& \mds{\spz}{\lli z t }{z(t)} =0 \ \Rightarrow \ \lli z t =z(t), &&  \mds{\spz}{z(t)}{\rli z t } =0 \ \Rightarrow \ z(t) = \rli z t.  
\end{aligned}
\end{equation}
\end{subequations}
We denote by $\spbv {\sigma_\spz,\mdsn{\spz}}0T{\spz}$ the space of $(\sigma_\spz,\mdsn{\spz})$-regulated functions   $z$   with finite $\mdsn{\spz}$-total variation
$\Vari{\mdsn \spz}z0T$, where we define, for  a subset  $E\subset [0,T]$,
\begin{equation}
\label{def-tot-var}
\begin{aligned}
\Vars{\mdn}z{E}:  = \sup \left\{ \sum_{j=1}^M \mds{\spz}{\teta_z(t_{j-1})}{\teta_z(t_j)}\, : \ t_0<t_1<\ldots<t_M, \ \{ t_j\}_{j=0}^M \in \mathfrak{P}_f(E)\right\}
 \end{aligned}
 \end{equation}
 with $ \mathfrak{P}_f(E)$ the collection of all finite subsets of $E$. 
 \end{definition}
 \noindent
If   $(\spz,\mdsn{\spz})$ is a complete metric space, every  function $ z \in \spbv {\mdsn{\spz}}0T{\spz}$ is ($\mdsn{\spz}$-)regulated, namely at every $t\in [0,T]$ there exist the left- and right-limits of $u$ w.r.t.\ the metric $\mdsn{\spz}$.  
However, since in the present context  we are not assuming  
completeness of $(\spz,\mdsn{\spz})$,
the concept of $(\sigma_\spz,\mdsn{\spz})$-regulated function  turns out to be significant. Observe that, for every $z \in \spbv {\sigma_\spz,\mdsn{\spz}}0T{\spz}$  the jump set 
\begin{equation}
\label{jump-z}
\jump z: = \jump z^- \cup \jump z^+, \quad \text{with } \jump z^- : = \{ t \in [0,T]\, : \ \lli zt \neq z(t)\}, \qquad \jump z^+ : = \{ t \in [0,T]\, : \ z(t) \neq  \rli zt \}, 
\end{equation}
coincides with the jump set of the real monotone function $V_z: [0,T] \to \R$, $t\mapsto V_z(t):= \Vari {\mdsn{\spz}}z0t$. Therefore, $\jump z$ is at most countable. 
\par
Finally, as we will discuss at the beginning of Section \ref{ss:2.2}, the $u$-component of a Visco-Energetic solution is in principle only an element in $\mathrm{B}([0,T];\spu)$
(cf.\ Notation \ref{gen-not}). 
 However, in qualified situations (cf.\ Lemma \ref{l:u-regulated} 
 ahead) $u$ will additionally   be a 
\begin{equation}
\label{sigma-u-regulated}
\text{$\sigma_\spu$-regulated function, i.e. } \ \forall\, t \in [0,T] \quad   \exists\, \lli ut = \lim_{s\up t} u(s)  \text{ in } (\spu,\sigma_\spu), \quad   \rli ut = \lim_{s\down t} u(s)  \text{ in } (\spu,\sigma_\spu)\,.
\end{equation}

\subsubsection{\bf The energy functional}
\noindent 
We now recall the basic assumptions on the energy functional $\calE$ enucleated in \cite{SavMin16}.
In view of Proposition \ref{prop:VE=E} ahead, 
differently from \cite{SavMin16} we
choose not to encompass 
 lower semicontinuity and  compactness requirements into a unique condition. 
\paragraph{\bf Assumption $<A>$} The RIS $\RIS$ fulfills
\begin{description}
\item[$<A.1>$] \textbf{Lower semicontinuity:} 
The proper domain $\mathrm{D}(\calE(t,\cdot))$ does not depend on $t$, namely there exists
$\domene {}  \subset \Xs $  
such that  
  $ \mathrm{D}(\calE(t,\cdot))  \equiv  \domene{}$ for all $t\in [0,T]$.
In what follows, we will use the notation 
\begin{equation}
\label{notation-4-domains}
\domene u: = \pi_1(\domene{}), \qquad \domene z: = \pi_2(\domene{})
\end{equation}
with $\pi_1: \Xs \to \spu$ and  $\pi_2: \Xs \to \spz$ the  projection operators. 
We require that  
\begin{equation}
\label{pert-func}
\begin{gathered}
\text{there exists $F_0 \geq 0$ such that the perturbed functional}
\\
\mathcal{F}: [0,T]\times \Xs \to (-\infty,\infty] \qquad \pert t{(u,z)}: = \ene t{(\vu,\vz)} + \mds{\spz}{z_o}{z}+F_0
\\
\text{ fulfills }  \pert t{(u,z)} \geq 0 \quad \text{for all }  (t,(u,z))\in [0,T]\times \Xs\,,
\end{gathered} 
\end{equation}
with $z_o$   the reference point  satisfying \eqref{asymm-dist-z}. 
In what follows, with slight abuse of notation we will  write
\[
\enet t{\vu}{\vz} \text{ in place of } \ene t{(\vu,\vz)}, \text{ and analogously for $\mathcal{F}$.}
\]
We impose that 
$\calE$ is $\sigma$-l.s.c.\ on the sublevels of $\calF$.
\item[$<A.2>$] \textbf{Compactness:}  The sublevels of $\calF$
 are $\sigma_\R$-sequentially compact in $[0,T]\times \Xs$.
\item[$<A.3>$] \textbf{Power control:} The functional 
$t\mapsto \enet tuz$ is differentiable for all $(u,z)$, $\pwn: (0,T) \times  \domene{}  \to \R$
is 
 sequentially upper semicontinuous on the sublevels of $\calF$, and 
 \begin{equation}
 \label{power-control}
\begin{aligned}
\exists\,  \Lambda_P, \,  C_P>0   \ \ \forall\, (t,u,z) \in   (0,T) \times  \domene{}  \, : \quad  |\pwt tuz|\leq   \Lambda_P \left(  \pertt tuz{+} C_P \right) \,.
\end{aligned}
\end{equation}
\end{description}
\par
\begin{remark}
\label{rmk:wlogzo}
\upshape
A natural choice for the reference point $z_o$  in \eqref{asymm-dist-z} and \eqref{pert-func}
is the initial datum $z_0 \in \domene z $ for the rate-independent process. In fact, along the evolution there holds $\Vari{\mdsn {\spz}}{z}0T<\infty$,
cf.\ Remark \ref{rmk:sth-about-VE} ahead, 
 and therefore $\sup_{t\in [0,T]} \mds{\Spz}{z_0}{z(t)} \leq C <\infty$.  That is why, 
we may suppose without loss of generality that, for every $z\in \domene z$ there holds
$\mds{\Spz}{z_0}{z}<\infty$.
\par
In \cite{SavMin16} a more general version of the power-control condition was assumed, involving a  generalized   `power functional' 
$\calP :[0,T]\times \domene{} \to \R$  satisfying 
\[
\limsup_{s\down t} \frac{\enet suz -\enet tuz}{s-t} \leq \calP(t,u,z) \leq \liminf_{s\up t} \frac{\enet tuz -\enet suz}{t-s} \quad \text{for all } (t,u,z) \in [0,T]\times \domene{}, 
\]
 and in fact surrogating the   partial time derivative $\partial_t \calE$ whenever $\calE$ is not differentiable w.r.t.\ $t$. 
 This generalization was mainly motivated by the need to encompass  in the theory \emph{marginal energies}, i.e.\ functionals only depending on the dissipative variable $z$
 and obtained from energies depending on both variables $(u,z)$ via minimization w.r.t.\ $u$. 
 For simplicity, in this paper we shall not work with this power functional. 
 \par Finally, we point out that \eqref{power-control} could be weakened by allowing for a (positive) function $\Lambda_P \in L^1(0,T)$, in place of a (positive) constant $\Lambda_P$. 
\end{remark}
\noindent
A straightforward consequence of $<A.1>$ \&    $<A.2>$  is that 
\begin{equation}
\label{reduced-notempty}
\inf_{u \in \Spu} \enet tuz \neq \emptyset \qquad \text{for all } (t,z) \in [0,T]\times \domene z.
\end{equation}
In what follows, we will often work with the \emph{reduced energy} functional
\begin{equation}
\label{reduced}
\renen: [0,T] \times \Spz \to (-\infty,\infty] \qquad  \rene tz: = \begin{cases}
\inf_{u \in \Spu} \enet tuz  = \min_{u\in \Spu} \enet tuz & \text{if } (t,z) \in  [0,T]\times \domene z,
\\
\infty & \text{otherwise}.
\end{cases}
\end{equation}
\par
Combining the power-control estimate in 
\eqref{power-control} with the Gronwall Lemma, we conclude that 
\[
\pertt tuz\leq \pertt s{u}{z} \exp\left(C_P |t-s|\right) \quad \text{for all } s,\, t \in [0,T] \text{ and all } (u,z)\in \Xs. 
\]
In particular,
\begin{equation}
\label{prop-pert}
\sup_{t\in [0,T]} \pertt tuz \leq  \exp (C_P T ) \pertt 0{u}{z}  
 \quad \text{for all } (u,z) \in \Xs\,.
\end{equation}
That is why, in what follows we will direcly work  with the functional
\[
\pertot uz: = \mathcal{F} (0,u,z) \quad \text{for every } (u,z) \in \Xs.
\]
\par Finally, we highlight that 
the upper semicontinuity  of $\partial_t \calE$ required in
$<A.3>$ can be relaxed if $\mdsn{\spz} $ enjoys an additional continuity property, stated in $<A.3'>$ below. Indeed,   $<A.3'>$  can  replace  assumption
$<A.3.>$.
\begin{description}
\item[$<A.3'>$] 
$\mdsn{\spz}$ is left-continuous on the sublevels of $\calF_0$, i.e.\ for all sequences $(u_n,z_n)_n\subset \spu \times \spz$ s.t.
\begin{subequations}
\label{replac-pw}
\begin{equation}
\label{dist-continuity}
\pertot{u_n}{z_n} \leq C, \quad z_n \wsigmaz z \quad \text{there holds} \quad \mds{\spz}{z_n}{\zeta}\to \mds{\spz}{z}{\zeta} \qquad \text{for all } \zeta \in \spz,
 \end{equation}
and the map $\pwn:[0,T]\times \Xs \to \R$ satisfies \eqref{power-control} and the \emph{conditional} upper semicontinuity 
\begin{equation}
\label{usc-power}
(t_n,u_n,z_n) \wsigmaR (t,u,z), \quad \enet{t_n}{u_n}{z_n}\to \enet tuz \ \Rightarrow \ \limsup_{n\to\infty} \pwt{t_n}{u_n}{z_n} \leq \pwt tuz.
\end{equation}
\end{subequations}
\end{description}
The condition that convergence of the energies implies convergence of the powers is often required for the analysis of rate-independent systems, cf.\ \cite{MieRouBOOK}. 
For later use,  we recall here a result from  where this implication was proved in the case 
in which
 $\partial_t \calE$ is uniformly continuous on sublevels of $\calE$, namely
\begin{equation}
\label{unif-cont}
\begin{gathered}
\forall\, C>0 \text{ there exists a modulus of continuity } \omega_C: [0,T]\to [0,\infty) \text{ such that }
\\
\forall\, (u,z) \in \spu \times \spz \, : \  \pertot uz \leq C \ \Rightarrow \ |\partial_t \enet{t_1}{u}{z}{-} \partial_t \enet{t_2}{u}{z}  |
\leq \omega_C(|t_1-t_2|)  \text{ for all } t_1,\,t_2 \in [0,T]\,.
\end{gathered}
\end{equation} 
\begin{proposition}{\cite[Prop.\ 3.3]{FraMie06ERCR}}
\label{prop:GF}
Assume \eqref{unif-cont}. Then, for every $t\in [0,T]$ the following implication holds
\begin{equation}
\label{FRANCFORT}
\left(  (u_n,z_n) \wsigma (u,z) \text{ in } \Xs,  \quad 
\enet{t_n}{u_n}{z_n}\to \enet tuz \right) \ 
\Longrightarrow \ \partial_t \enet{t_n}{u_n}{z_n}\to \partial_t \enet tuz\,.
\end{equation}
\end{proposition}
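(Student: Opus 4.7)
The plan is to mimic the classical Francfort--Mielke two-sided estimate. The engine of the argument is that \eqref{unif-cont} lets me replace the power $\partial_t \calE(t_n, u_n, z_n)$ by the finite-difference quotient $h^{-1}[\calE(t_n+h, u_n, z_n) - \calE(t_n, u_n, z_n)]$ up to an error $\omega_C(h)$. I would then pass to the liminf in $n$ using the $\sigma$-lower semicontinuity in $<A.1>$ and the hypothesis $\calE(t_n, u_n, z_n) \to \calE(t, u, z)$, and finally let $h \downarrow 0$. Running the same scheme with $-h$ in place of $+h$ would yield the twin upper bound on $\limsup_n \partial_t \calE(t_n, u_n, z_n)$, and combining the two closes the argument. (I read the statement as including the implicit $t_n\to t$, without which the conclusion would be meaningless.)

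To set things up, I first extract a uniform sublevel bound. Since $\calE(t_n, u_n, z_n)$ converges and is hence bounded, and $\mdsn{\spz}$ is lower semicontinuous with $z_n \wsigmaz z$, the Gronwall-type estimate behind \eqref{prop-pert}, applied in both time directions, yields $\calF_0(u_n, z_n) \leq C$ for some constant $C$ independent of $n$. This gives access to a single modulus $\omega := \omega_C$ from \eqref{unif-cont} valid along the whole sequence. For $h > 0$ so small that $t \pm h$ (and eventually $t_n \pm h$) lie in $[0,T]$, the Newton--Leibniz formula for $s \mapsto \calE(s, u_n, z_n)$ combined with \eqref{unif-cont} produces
\begin{equation*}
\calE(t_n + h, u_n, z_n) - \calE(t_n, u_n, z_n) = h \, \partial_t \calE(t_n, u_n, z_n) + h \, r_n^+(h), \qquad |r_n^+(h)| \leq \omega(h),
\end{equation*}
together with the analogous identity in the $-h$ direction.

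Applying the $\sigma$-lower semicontinuity in $<A.1>$ at the fixed time $t+h$ yields $\liminf_n \calE(t+h, u_n, z_n) \geq \calE(t+h, u, z)$. To transfer this to $\calE(t_n + h, u_n, z_n)$, I would exploit that the power-control bound \eqref{power-control} makes $s \mapsto \calE(s, u_n, z_n)$ uniformly Lipschitz on the chosen sublevel, so that $|\calE(t_n + h, u_n, z_n) - \calE(t + h, u_n, z_n)| \to 0$ as $n \to \infty$, and hence also $\liminf_n \calE(t_n + h, u_n, z_n) \geq \calE(t+h, u, z)$. Combining with $\calE(t_n, u_n, z_n) \to \calE(t, u, z)$ and with the identity above, applied also to the limit triple $(t, u, z)$, gives
\begin{equation*}
h \, \liminf_n \partial_t \calE(t_n, u_n, z_n) \geq \calE(t+h, u, z) - \calE(t, u, z) - h\,\omega(h) \geq h \, \partial_t \calE(t, u, z) - 2 h\, \omega(h).
\end{equation*}
Dividing by $h$ and letting $h \downarrow 0$ yields $\liminf_n \partial_t \calE(t_n, u_n, z_n) \geq \partial_t \calE(t, u, z)$. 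Repeating the trick with $-h$, based on the lower semicontinuity at time $t - h$, will produce the reverse inequality $\limsup_n \partial_t \calE(t_n, u_n, z_n) \leq \partial_t \calE(t, u, z)$, and \eqref{FRANCFORT} follows.

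The main technical point I expect to require care is transferring the fixed-time lower semicontinuity from $<A.1>$ to the varying times $t_n \pm h$; the Lipschitz-in-time regularity of $\calE(\cdot, u_n, z_n)$ on sublevels, granted by \eqref{power-control}, is precisely what makes this transfer cheap. Beyond that, no ingredients are needed besides Gronwall, Newton--Leibniz, and \eqref{unif-cont}.
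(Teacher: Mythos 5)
The paper does not actually prove this statement: it is quoted verbatim from \cite[Prop.\ 3.3]{FraMie06ERCR}, and your two-sided difference-quotient scheme is precisely the argument of that reference — replace the power by $h^{-1}\bigl[\calE(t_n\pm h,u_n,z_n)-\calE(t_n,u_n,z_n)\bigr]$ up to an error $\omega_C(h)$ via \eqref{unif-cont} and Newton--Leibniz, pass to the $\liminf$ in $n$ using the $\sigma$-lower semicontinuity of $\calE(t\pm h,\cdot,\cdot)$ together with the convergence of the energies, and send $h\downarrow 0$; the transfer from the varying times $t_n\pm h$ to the fixed times $t\pm h$ via the uniform-in-$n$ Lipschitz bound in time coming from \eqref{power-control} is also the right device. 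So the architecture is sound and matches the cited proof.

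The one step whose justification does not work as written is the uniform sublevel bound $\pertot{u_n}{z_n}\leq C$. Lower semicontinuity of $\mdsn{\spz}$ only gives $\liminf_{n}\mds{\spz}{z_o}{z_n}\geq \mds{\spz}{z_o}{z}$, i.e.\ a bound from \emph{below}, whereas you need $\sup_n \mds{\spz}{z_o}{z_n}<\infty$; and boundedness of $\calE(t_n,u_n,z_n)$ does not supply it either, since $\calF\geq 0$ only yields $\calE\geq -\mds{\spz}{z_o}{z}-F_0$. This bound cannot be skipped: it is what entitles you to a single modulus $\omega_C$ in \eqref{unif-cont}, to the uniform Lipschitz-in-time estimate from \eqref{power-control}, and to the lower semicontinuity of $<A.1>$, which is only assumed on sublevels of $\calF$. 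The correct reading is that \eqref{FRANCFORT} carries the implicit standing hypothesis that $(u_n,z_n)$ stays in a sublevel of $\calF_0$ — exactly the form in which the proposition is used in the paper, cf.\ \eqref{dist-continuity}--\eqref{usc-power}, where $\pertot{u_n}{z_n}\leq C$ is part of the premise. Once that hypothesis is made explicit rather than ``derived'', your proof is complete, up to the routine remark that at $t=0$ (resp.\ $t=T$) only the forward (resp.\ backward) quotient is available, consistently with the one-sided meaning of $\partial_t\calE$ at the endpoints.
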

\subsubsection{\bf The viscous correction of the time-incremental scheme}
We consider 
\[
\text{a lower semicontinuous map }
\corrsn{\spz}: \spz \times \spz \to [0,\infty]  \quad \text{with } \corrs{\spz}{z}{z}=0 \quad \text{for all } z \in \spz\,.
\]
We introduce the `corrected' dissipation
\[
\cmds{\spz}{z}{z'} : = \mds\spz{z}{z'} + \corrs{\spz}{z}{z'}\,.
\]
\begin{definition}
	\label{def:stability}
	Let $Q\geq 0$. We say that $(t,\vu,\vz)\in [0,T]\times 
	\Xs$
	is \emph{$(\cmdsn{\spz},Q)$-stable} if it satisfies
	\begin{equation}
	\label{Qstable}
	\enet t{\vu}{\vz} \leq \enet t{\vu'}{\vz'} + \cmds{\spz}{\vz}{\vz'}  +Q \qquad \text{for all } (\vu',\vz')\in  \Xs. 
	\end{equation}
	If $Q=0$, we will simply say that $(t,\vu,\vz)$ is $\cmdsn{\spz}$-stable. We denote by $\mathscr{S}_{\cmdsn{\spz}}$ the collection of all the $\cmdsn{\spz}$-stable points, and by
	 $\mathscr{S}_{\cmdsn{\spz}}(t)$ its section at the  process  time $t\in [0,T]$. 
\end{definition}
\noindent
In view of $<A.1>$ \&  $<A.2>$  (which guarantee \eqref{reduced-notempty}), 
the quasi-stability condition  \eqref{Qstable} is equivalent to 
\begin{equation}
\label{reduced-stability}
\red t{\vz} \leq \red t{\vz'} + \cmds{\spz}{\vz}{\vz'} +Q \quad \text{for all } \vz' \in \spz \ 
\end{equation}
involving the reduced energy $\calI$ from \eqref{reduced}. 
That is why, 
\begin{itemize}
\item[-]
in what follows we will often allow for the abuse of notation 
$
(t,z) \in \mathscr{S}_{\cmdsn{\spz}}$ (and $z \in \mathscr{S}_{\cmdsn{\spz}}(t)$), in place of 
$(t,u,z) \in \mathscr{S}_{\cmdsn{\spz}}$. 
\item[-]
 we  now  introduce the \emph{residual stability function}
  $\rstabname: [0,T]\times \spz \to \R$ directly in terms of the reduced  energy $\calI$, namely we define
 \begin{equation}
 \label{residual-stability-function}
 \begin{gathered}
 \rstabt tz:  = \sup_{z'\in \spz} \left\{ \red tz - \red t{z'} -\cmds{\spz} {z}{z'} \right\} = \red tz -\mathscr{Y}(t,z)  \quad \text{with}
\\
 \mathscr{Y}(t,z)  =  \inf_{z'\in \spz} \left( \rene t{z'}+\cmds{\spz} {z}{z'}\right). 
 \end{gathered}
\end{equation} 
\end{itemize}
Note that, 
as soon as the energy functional $\calE$ complies with  $<A.1>$ and  $<A.2>$    (and we will suppose this hereafter),
the $\inf$ in the definition of $\mathscr{Y}$ is attained, i.e.
\begin{equation}
\label{minimal-set}
M(t,z): =  \mathrm{Argmin}_{z'\in \spz}  \left( \rene t{z'}+\cmds{\spz} {z}{z'}\right) \neq \emptyset.
\end{equation}
 Observe that $\mathscr{R}$  in fact records the failure of the stability condition at a given point  $(t,z) \in [0,T]\times \spz$, since
 \begin{equation}
 \label{propR}
 \begin{aligned}
 &
 \rstabt tz \geq 0 \quad \text{for all } (t,z) \in [0,T]\times \spz, \quad \text{with}
 \\ 
 &  \rstabt tz=0 \text{ if and only if } (t,z) \in \stab {\cmdsn{\spz}}\,.
 \end{aligned}
 \end{equation}
\par
Let us now specify the compatibility properties that
 \emph{admissible} viscous corrections have to enjoy with respect to the 
 driving distance $\mdsn{\spz}$.  
\begin{description}
	\item[$<B.1>$] \textbf{$\mdsn{\spz}$-compatibility:} 
	For every $z,\, z',\, z'' \in \spz$ 
	\begin{equation}
	\label{B1}
	\mds{\spz}{z}{z'} =0 \ \Rightarrow \ \corrs{\spz}{z''}{z'} \leq \corrs{\spz}{z''}z \quad \text{and} \quad \corrs{\spz}z{z''}  \leq \corrs{\spz}{z'}{z''}\,.
	\end{equation}
		\item[$<B.2>$] \textbf{Left $\mdsn{\spz}$-continuity:} 
		For every sequence $(u_n,z_n)_n$ and every  $(u,z) \in \Xs$ we have
		\begin{equation}
		\label{B2}
		\sup_n \pertot{u_n}{z_n} <\infty, \quad 
		z_n \wsigmaz z, \quad
		\mds{\spz}{z_n}{z} \to 0 \ \Rightarrow \ \lim_{n\to\infty} \corrs{\spz}{z_n}{z}=0.
		\end{equation}
			\item[$<B.3>$] \textbf{$\cmdsn{\spz}$-stability yields local $\mdsn{\spz}$-stability:} 
			for all $(t,u,z) \in \mathscr{S}_{\cmdsn{\spz}}$ and all $M>1$ there exist $\eta>0$ and a neighborhood $I_U \times I_Z$
			of $(u,z)$ such that 
			\begin{equation}
			\label{B3}
			\begin{aligned}
			\enet s{u'}{z'} \leq \enet  suz + M \mds{\spz}{z'}{z} \quad &  \text{for all } (s,u',z') \in \mathscr{S}_{\cmdsn{\spz}} \\ & \text{ with } s \in [t-\eta,t], \ 
			\text{for all }
			(u,z)\in I_U \times I_Z \text{ with } \mds{\spz}{z'}{z} \leq \eta.
			\end{aligned}
			\end{equation}			
\end{description}
\begin{remark}
\label{rmk:MS}
\upshape
As already observed in \cite{SavMin16},
 \eqref{B3}  is in fact equivalent to  the condition 
\begin{equation}
\label{cond-w-I}
\limsup_{(s,z') \widetilde{\rightharpoonup} (t,z)}
\frac{\red s{z'} - \red tz}{\mds{\spz}{z'}{z}} \leq 1\,,
\end{equation}
involving the reduced energy $\calI$ from \eqref{reduced-stability},
where we have written
$
(s,z') \widetilde{\rightharpoonup} (t,z)  $ as a  place-holder for  $(  s \to t, \ 
z' \wsigmaz z,  \  \mds{\spz}{z}{z'}\to 0,  \ 
	(s,z') \in \mathscr{S}_{\mathsf{D}_\spz}, \ s\leq t ). $ 
In turn,
a sufficient condition for \eqref{cond-w-I} is 
\begin{equation}
\label{will-be-checked}
\limsup_{(s,z') \widetilde{\rightharpoonup} (t,z)}
\frac{\corrs{\spz}{z'}{z}}{\mds{\spz}{z'}{z}} =0 \qquad \text{for every } z \in \mathscr{S}_{\cmdsn{\spz}}(t) \text{ and all } t \in [0,T]. 
\end{equation}
In particular,
any viscous correction of the form
\begin{equation}
\label{a-funct-of-d}
\corrs{\spz}{z}{z'} = h(\mds{\spz}{z}{z'}) \qquad \text{with } h \in \mathrm{C}([0,\infty)) \text{ nondecreasing and fulfilling } \lim_{r\down 0} \frac{h(r)}{r} =0
\end{equation}
satisfies \eqref{will-be-checked} and, in fact, the whole  Assumption $<B>$. 
\end{remark}
\paragraph{\bf Closedness of the (quasi-)stable set.}
Finally, we require
\begin{description}
	\item[$<C>$]
	For every $Q\geq 0$ the $(\cmdsn{\spz},Q)$-quasistable sets have $\sigma$-closed intersections with the sublevels of $\calF_0$.
	\end{description}
It was proved in \cite[Lemma 3.11]{SavMin16} that $<C>$  holds if and only if 
a property  akin to the \emph{mutual recovery sequence} condition from \cite{MRS06} holds, namely
\begin{equation}
\label{MRS}
\begin{aligned}
&
\text{for every sequence } (t_n,z_n)_n \subset [0,T]\times Z \text{ with } 
t_n\to t, \ z_n \wsigmaz z, \ \sup_n \mds{\spz}{z_o}{z_n}<\infty
\\
& \text{ and } \lim_{n\to\infty} \red {t_n}{z_n} = \red tz +\eta, \ \eta\geq 0,\\
&
\text{there exists } z'\in M(t,z) \text{ and a sequence } (z_n')_n \text{ such that}
\\
& 
\liminf_{n\to\infty} \left( \red {t_n}{z_n'} + \cmds{\spz}{z_n}{z_n'} \right) \leq \red t{z'} + \cmds{\spz}{z}{z'} +\eta\,,
\end{aligned}
\end{equation}
(recall that $M(t,z)$ denotes  the set of minimizers associated with the functional $\mathscr{Y}$ in \eqref{residual-stability-function}). 
\subsection{Definition of  Visco-Energetic solution}
\label{ss:2.2}
 As already mentioned in the Introduction,
the concept of Visco-Energetic solution of the rate-independent system $\RIS$ 
(cf.\   Definition \ref{def:VE} ahead)
consists of the $\cmdsn{\spz}$-stability condition 
\eqref{ve-stab}
combined with the   energy-dissipation balance \eqref{ve-enbal}.
 In  \eqref{ve-enbal} the energy dissipated at jumps is measured in terms of a jump dissipation cost $\vecostname$ that keeps track of the viscous correction $\delta_\spz$. This jump dissipation is obtained by minimizing a suitable transition cost over a class of continuous curves connecting the two end-points of a jump.
In what follows, 
\begin{enumerate}
\item
Firstly, we will  specify what we mean by `end-points of a jump' of a curve $(u,z)$ enjoying the properties of a Visco-Energetic solution, viz.
\begin{equation}
\label{regularity-of-u-z}
z \in \spbv{\sigma_\spz, \mdsn{\spz}}0T{\spz} \quad \text{ and } t \mapsto u(t) \text{ is a measurable selection in } \argmin_{u\in \spu}\enet t{u}{z(t)}.
\end{equation}
Namely, for  a curve $(u,z)$ as in \eqref{regularity-of-u-z},   we 
will introduce  \emph{surrogate} left- and right-limits for $u$ at a jump point $t\in \jump z$.
\item Secondly, we will rigorously introduce the cost $\vecostname$.
\end{enumerate}
\paragraph{\bf 1. Surrogate left- and right limits of $u$:} given a curve $(u,z)$ as in \eqref{regularity-of-u-z},  we extend  $u$ in this way:
\begin{equation}
\label{def-end-points-u}
\text{at every } t \in \jump z \text{ we denote by  }  \ 
\begin{cases}  \lli u t & \text{  an element }  \argmin_{u\in \spu}\enet t{u}{\lli zt}\,,
\\
\rli u t  & \text{  an element  in }  \argmin_{u\in \spu}\enet t{u}{\rli zt}\,,
\end{cases}
\end{equation}
with the convention that $\lli u t  = \rli u t = u(t)$ if $t \notin \jump z$, such that the extended mapping, still denoted by $u$, is still measurable. 
\par
Observe that this definition is meaningful in view of
\eqref{reduced-notempty}.  The notation $\lli ut$ and $\rli u t$ is used here in an extended sense, as  the true left- and right-limits of $u$ at $t$
w.r.t.\ $\sigma_\spu$-topology
 need not exist. Nonetheless,   in Lemma \ref{l:u-regulated} ahead,   we will provide some sufficient conditions, which can be verified for  a reasonable class  of examples, ensuring that, 
 if $(u,z) $ is a Visco-Energetic solution, then
  $u$ is $\sigma_\spu$-regulated and, in that case, $\lli u t$ and $\rli ut$ defined by \eqref{def-end-points-u} are its left- and right-limits. 
\paragraph{\bf 2. The Visco-Energetic cost $\vecostname$} It involves minimization of a suitable cost functional over a class 
 of continuous curves, connecting the left- and right-limits $(\lli u t, \lli z t)$ and $(\rli u t, \rli z t)$ at a jump point $t\in \jump z$ (with $\lli ut$ and $\rli ut$
as in \eqref{def-end-points-u}). 
 Such curves  
 are in general defined on a compact subset $E\subset \R$ with  a possibly more complicated structure than  that of an interval.  To describe it, we fix some notation:
\begin{subequations}
\begin{equation}
\label{notation-setE}
E^-: = \inf E, \quad E^+: = \sup E\,.
\end{equation}
We also 
introduce
\begin{equation}
\label{holes}
\text{the collection $\hole E$ of the connected components of the  set $[E^-,E^+]\setminus E$.}
\end{equation}
\end{subequations}
 Since $[E^-,E^+]\setminus E$ is an open set, $\hole E$ consists of at most countably many open intervals, which we will often refer to as the `holes' of $E$. 
We are now in a position to introduce 
 the transition cost at the basis of the concept of Visco-Energetic solution, evaluated along curves 
 $\teta = (\teta_u,\teta_z) \in \mathrm{B}(E;\Xs)$ such that, in addition 
 \begin{equation}
 \label{Csigma-d}
 \teta_z \in \mathrm{C}_{\sigma_\spz,\mdsn{\spz}}(E;\spz): = \mathrm{C}_{\sigma_\spz}(E;\spz) \cap \mathrm{C}_{\mdsn \spz}(E;\spz).
 \end{equation}
 Here, $\mathrm{C}_{\sigma_\spz}(E;\spz) $ is  the space of functions from $E$ to $\spz$ that are continuous with respect to the $\sigma_\spz$-topology,  while $ \mathrm{C}_{\mdsn \spz}(E;\spz)$ is the space of functions  $\teta_z: E \to \spz$  satisfying the following continuity condition w.r.t.\ $\mdsn \spz$:
 \[
 \forall\, \eps>0 \ \exists\, \eta>0 \ \forall\, s_0,\, s_1 \in E \text{ with } s_0\leq s_1 \leq s_0+\eta\, : \qquad \mds{\spz}{\teta_z(s_0)}{\teta_z(s_1)} \leq \eps\, .
 \]
\begin{definition}
\label{def-VE-trans-cost}
Let $E$ be a compact subset of $\R$ and $\teta = (\teta_u,\teta_z) \in   \mathrm{B}(E;\spu) \times  \mathrm{C}_{\sigma_\spz,\mdsn{\spz}}(E;\spz) $.
For every $t\in [0,T]$ we define the \emph{transition cost function} 
\begin{equation}
\label{ve-tcost}
\tcost{\VE}{t}{\teta}{E} : = \Vars {\mdsn{\spz}}{\teta_z} E + \Gap{\delta_\spz}{\teta_z} E  + \sum_{s\in E{\setminus}\{E^+\}} \rstab t{\teta_z(s)}\,,
\end{equation}
with
\begin{enumerate}
\item $ \Vars {\mdsn{\spz}}\teta E $  the  $\mdsn{\spz}$-total variation of the curve $\teta$, cf.\ \eqref{def-tot-var};
\item $\Gap{\delta_\spz}\teta E
\colon = \sum_{I\in \hole E} \delta_\spz(\teta_z(I^-),\teta_z(I^+)) $;
\item the (possibly infinite) sum
\[
 \sum_{s\in E{\setminus}\{E^+\}} \rstab t{\teta_z(s)}: =  \begin{cases}
 \sup \{  \sum_{s\in P} \rstab t{\teta_z(s)}\, :  \ P \in \mathfrak{P}_f(E{\setminus} \{E^+\}) \} &\text{ if }  E{\setminus}\{E^+\} \neq \emptyset,
 \\
 0 &\text{ otherwise}\,.
 \end{cases}
\]
\end{enumerate}
\end{definition}
\noindent
Along with \cite{SavMin16}, we observe that, for every fixed $t\in [0,T]$ and admissible $\teta $, the transition cost fulfills the additivity property
\[
\tcost{\VE}{t}{\teta}{E \cap [a,c]} = \tcost{\VE}{t}{\teta}{E \cap [a,b]} + \tcost{\VE}{t}{\teta}{E \cap [b,c]}  \quad \text{for all } a<b<c\,.
\]
\par
We are now in a position to define the  \emph{Visco-Energetic jump dissipation cost} $\vecostname: [0,T]\times X \times X \to [0,\infty]$ 
between the two end-points of a jump  of a curve $(u,z)$ as in \eqref{regularity-of-u-z}. Namely, we set
 \begin{equation}
 \label{vecost}
 \begin{aligned}
  \vecost t{(u_-,z_-)}{(u_+,z_+)}: = \inf\{   \tcost{\VE}{t}{\teta}{E}\, :     &  \ E \Subset \R, \ \teta  = (\teta_u,\teta_z) \in 
  \mathrm{B}(E;\spu) \times   \mathrm{C}_{\sigma_\spz,\mdn_\spz}(E;\spz), 
 \\  &
   \ \teta(E^-) =(u_-,z_-), \  \teta(E^+) =(u_+,z_+) \}.
   \end{aligned}
\end{equation}
\begin{remark}
\upshape
\label{rmk:dep-only-z}
\upshape
In fact, 
for every admissible transition curve $\teta = (\teta_u,\teta_z)$
between two pairs $(u_-,z_-)$ and $(u_+,z_+)$,
all of the three contributions to the transition cost from \eqref{ve-tcost}
  only depend on the $\teta_z$-component. 
   That is why, from now on with slight abuse of notation we will simply write 
\begin{equation}
\label{caveat-on-c}
  \vecost t{z_-}{z_+} \text{ in place of }   \vecost t{(u_-,z_-)}{(u_+,z_+)}\,.
\end{equation}
Accordingly, we will introduce the concept of \emph{Optimal Jump Transition}, cf.\ \eqref{def:OJT} ahead, only in terms of the $\teta_z$-component of an admissible transition curve $\teta = (\teta_u,\teta_z)$.  
\end{remark}
\par
With the jump dissipation cost $\vecostname$ we associate the  \emph{incremental cost} $\Delta_{\vecostname}: [0,T]\times \Xs \times \Xs \to [0,\infty]$ defined
at all $ t\in[0,T] $ and $ (u_-,z_-),\,(u_+,z_+) \in \Xs$
 by
\begin{equation}
\label{Delta-e}
 \Delta_{\vecostname}(t,(u_-,z_-),(u_+,z_+)) =  \Delta_{\vecostname}(t,z_-, z_+) : = \vecost  t{z_-}{z_+} - \mds{\spz}{z_-}{z_+}
\end{equation}
(in fact, observe that $ \vecost  t{z_-}{z_+} \geq \mds{\spz}{z_-}{z_+}$, so that  $ \Delta_{\vecostname}(t,z_-,z_+)\geq 0$,  for all $t\in [0,T]$
 and 
 $z_\pm \in \spz$). 
We will also use the notation
\[
\Delta_{\vecostname}(t,z_-,z,z_+) : = \Delta_{\vecostname}(t,z_-,z) + \Delta_{\vecostname}(t,z,z_+)\,.
\]
The \emph{augmented total variation} functional induced by $\vecostname$ is  defined, along a  curve  $(u,z)\in \BV([0,T];\Xs)$, by 
 \begin{equation}
 \label{augm-tot-var}
 \Vari {\mdsn{\spz},\vecostname}{(u,z)}{t_0}{t_1} : = \Vari{\mdsn{\spz}}{z}{t_0}{t_1} +  \Jvar {\Delta_{\vecostname}}{(u,z)}{t_0}{t_1} \quad\text{for any sub-interval } [t_0,t_1]\subset [0,T],
\end{equation}
where 
 the \emph{incremental jump variation} of $(u,z) $ on   $[t_0,t_1] $  is given by 
\begin{equation}
\label{jump-Delta-e}
\begin{aligned}
 \Jvar {\Delta_{\vecostname}}{(u,z)}{t_0}{t_1} : = 
  & \idelta{\vecostname}{t_0}{z(t_0)}{\rli z{t_0}} +  \idelta{\vecostname}{t_1}{ \lli z{t_1}}{ z(t_1)} \\  & + \sum_{t\in \jump u \cap (t_0,t_1)}
 \aidelta{\vecostname}{t}{\lli z t}{z(t)}{ \rli zt}\,.
 \end{aligned}
 \end{equation}
 Ultimately, also  this jump contribution  only depends on 
  the $z$-component, namely
\[
 \Jvar {\Delta_{\vecostname}}{(u,z)}{t_0}{t_1}  =   \Jvar {\Delta_{\vecostname}}{z}{t_0}{t_1}\,.
\]
Therefore, hereafter we shall write
\[
 \Vari {\mdsn{\spz},\vecostname}{z}{t_0}{t_1} \quad \text{in place of} \quad
  \Vari {\mdsn{\spz},\vecostname}{(u,z)}{t_0}{t_1}\,.
\]
\par
 As observed in \cite{SavMin16},
 although it is not canonically induced by a distance,
  the total variation functional 
 $\mathrm{Var}_{\mdsn{\spz},\vecostname}$ still enjoys the additivity property
  \[
 \Vari {\mdsn{\spz},\vecostname}z{a}{c} =  \Vari {\mdsn{\spz},\vecostname}z{a}{b} +  \Vari {\mdsn{\spz},\vecostname}z{b}{c} \qquad \text{for all } 0\leq a \leq  b \leq c \leq T.
\]
\par
We are now in a position to define the concept of Visco-Energetic solution $(u,z)$ of the rate-independent system $(X,\calE,\mdsn{\spz})$,  featuring the 
$\cmdsn{\spz}$-stability condition, and the
 energy-dissipation balance with the total variation functional $ \Varname {\mdsn{\spz},\vecostname}$. Let us stress in advance that, since 
$ \Varname {\mdsn{\spz},\vecostname} \geq  \Varname {\mdsn{\spz}} $  only controls the $z$-component of the curve $(u,z)$, it will  be for $z$  only that we shall claim $z\in \BV_{\mdsn{\spz}} ([0,T];\spz)$ (in fact, $z\in \BV_{\sigma_\spz,\mdsn{\spz}} ([0,T];\spz)$), while for the $u$ component only measurability will be a priori  asked for.  
\begin{definition}[Visco-Energetic solution]
\label{def:VE}
A curve  $(u,z) : [0,T]\to \Xs$, 
with $u \in \mathrm{B}([0,T];\spu) $
  and $z \in \BV_{\sigma_\spz,\mdsn{\spz}}([0,T];\spz)$, is   a Visco-Energetic ($\VE$) solution of the rate-independent system $\RIS$ with the viscous correction $\delta_\spz$,
if 
it satisfies
\begin{itemize}
\item[-] the minimality condition 
\begin{equation}
\label{MINIMALITY}
u(t) \in \argmin_{u\in \spu} \enet tu{z(t)} \qquad \text{for all $t\in [0,T]$;}
\end{equation}
\item[-] the $\cmdsn{\spz}$-stability condition
\begin{equation}
\label{stab-VE}
	\tag{$\mathrm{S}_{\mathrm{VE}}$}
	\begin{aligned}
\enet t{u(t)}{z(t)} &  \leq \enet t{u'}{z'}  +
\cmds{\spz}{z(t)}{z'} \\ & =   \enet t{u'}{z'} 
+ \mds{\spz}{z(t)}{z'} + 
\corrs{\spz}{z(t)}{z'}  \quad \text{for all } (u',z') \in \Xs \text{ and all } t \in [0,T]\setminus \jump z,
\end{aligned}
\end{equation}
\item[-]  the $(\mdsn{\spz},\vecostname)$-energy-dissipation balance
\begin{equation}
\label{enbal-VE}
	\tag{$\mathrm{E}_{\mathrm{VE}}$}
\enet t{u(t)}{z(t)} + \Vari {\mdsn{\spz},\vecostname}{z}0{t} = \enet 0{u(0)}{z(0)} +\int_0^t \pwt s{u(s)}{z(s)} \dd s \quad \text{for all } t \in [0,T]\,.
\end{equation}
\end{itemize}
\end{definition}
\begin{remark}
\label{rmk:sth-about-VE}
\upshape
From the energy-dissipation balance, exploiting the power-control condition \eqref{power-control} to estimate the power term on the 
right-hand side of  \eqref{enbal-VE}, 
we easily deduce that 
\begin{equation}
\label{sup-perto}
\begin{cases}
\sup_{t\in [0,T]} |\enet t{u(t)}{z(t)}| \leq \sup_{t\in [0,T]}  \calF(t,u(t),z(t)) \leq C_0,
\\
 \Vari {\mdsn{\spz}}{z}0{T}\leq  \Vari {\mdsn{\spz},\vecostname}{z}0{T} \leq C_0
\end{cases}
\end{equation}
for a constant $C_0>0$ only depending on $(u(0),z(0))$. 
\par
Observe that the $\cmdsn{\spz}$-stability condition, 
tested with $(u',z') =(u',z(t))$ and $u'$ arbitrary in $\spu$, in particular ensures that 
$u(t) \in \argmin_{u\in \spu} \enet tu{z(t)}$ for all $t\in [0,T] \setminus \jump z$. We want to claim this property at \emph{all} $t \in [0,T],$  though. 
That is why, \eqref{MINIMALITY} is required, as a separate property, at all $t\in [0,T]$. 
\end{remark}

\subsection{Characterization, properties, and main existence result for Visco-Energetic solutions}
\label{ss:2.3}
 In all of the following statements  we will implicitly assume that the rate-independent system $\RIS$ satisfies
conditions  $<T>, $ $  <A>,$ $  <B>,  $ and $<C>$  enucleated in Sec.\  \ref{ss:2.1}; we will impose them explicitly only  in the statement of  Theorem \ref{thm:exists-VE}.
\begin{lemma}
\label{l:u-regulated} 
Suppose that 
\begin{equation}
\label{unique-element}
\argmin_{u\in \spu} \enet tuz \ \  \text{ is a singleton for every } (t,z) \in [0,T]\times\domene z.
\end{equation}
Let $(u,z)$ be a Visco-Energetic solution to $\RIS$. Then, $u$ is $\sigma_\spu$-regulated, with left- and right-limits given by \eqref{def-end-points-u}.
\end{lemma}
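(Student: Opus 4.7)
My plan is a compactness-uniqueness argument pivoting on the closedness of the $\cmdsn{\spz}$-stable set from assumption $<C>$. Fix $t\in[0,T]$ and a sequence $s_n\to t$, strictly from either side. By the uniform energy bound \eqref{sup-perto} and the compactness in $<A.2>$, the triples $(s_n,u(s_n),z(s_n))$ lie in a $\sigma_\R$-compact sublevel of $\calF$; so, up to a non-relabeled subsequence, $(u(s_n),z(s_n))\wsigma(u^*,z^\dagger)$ for some $(u^*,z^\dagger)\in\Xs$. Since $(\spz,\sigma_\spz)$ is Hausdorff by \eqref{top-space} and $z\in\spbv{\sigma_\spz,\mdsn{\spz}}0T{\spz}$, uniqueness of $\sigma_\spz$-limits pins $z^\dagger=\lli z t$ when $s_n\uparrow t$ and $z^\dagger=\rli z t$ when $s_n\downarrow t$.

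To identify $u^*$, I exploit that $\jump z$ is at most countable, so after possibly passing to a further subsequence I may assume $s_n\notin\jump z$ for all $n$; at each such $s_n$ the $\cmdsn{\spz}$-stability \eqref{stab-VE} applies, i.e.\ $(s_n,u(s_n),z(s_n))\in\stab{\cmdsn{\spz}}$. Since \eqref{sup-perto} places the sequence in a sublevel of $\calF_0$, assumption $<C>$ transfers stability across the $\sigma_\R$-limit: $(t,u^*,z^\dagger)\in\stab{\cmdsn{\spz}}$. Denote by $u^{**}$ the element $\lli u t$ (resp.\ $\rli u t$) prescribed by \eqref{def-end-points-u}; testing the limiting stability against the trial state $(u^{**},z^\dagger)$, and using $\mds{\spz}{z^\dagger}{z^\dagger}=0$ together with $\corrs{\spz}{z^\dagger}{z^\dagger}=0$, yields
\[
\enet t{u^*}{z^\dagger}\leq \enet t{u^{**}}{z^\dagger}+\cmds{\spz}{z^\dagger}{z^\dagger}=\rene t{z^\dagger}.
\]
Combined with the trivial inequality $\enet t{u^*}{z^\dagger}\geq\rene t{z^\dagger}$, this gives $u^*\in\argmin_u\enet t u{z^\dagger}$, and the uniqueness assumption \eqref{unique-element} forces $u^*=u^{**}$.

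Because every subsequence of $\{s_n\}$ admits a sub-subsequence along which $u$ converges to the same limit $u^{**}$, the first-countable Hausdorff topology $\sigma_\spu$ (cf.\ \eqref{top-space}) ensures the full sequence $u(s_n)\wsigmau u^{**}$. Applied to left- and right-approaching sequences separately, this produces one-sided $\sigma_\spu$-limits of $u$ at every $t\in[0,T]$, equal respectively to $\lli u t$ and $\rli u t$ as defined in \eqref{def-end-points-u}, proving the lemma.

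The main subtlety will be confirming the applicability of $<C>$: the approximating triples must be genuinely $\cmdsn{\spz}$-stable (handled by the countability of $\jump z$) and must sit in a $\calF_0$-sublevel (handled by \eqref{sup-perto}). The key elegance is that choosing the trial state $z'=z^\dagger$ collapses the dissipation term $\cmds{\spz}{z^\dagger}{z^\dagger}$ to zero, thereby entirely bypassing the asymmetry of $\mdsn{\spz}$ and the absence of upper semicontinuity of $\calE$ in the $z$-variable --- obstructions that would defeat a more naive approach based on passing to the $\sigma$-limit directly in the minimality condition \eqref{MINIMALITY}.
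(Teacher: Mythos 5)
Your proof follows the same route as the paper's: extract a $\sigma$-convergent subsequence using the energy bound \eqref{sup-perto} and the compactness assumption $<A.2>$, identify the $z$-limit from the $\sigma_\spz$-regularity of $z$, transfer the $\cmdsn{\spz}$-stability to the limit via the closedness assumption $<C>$, and invoke uniqueness \eqref{unique-element} together with the subsequence principle. Your explicit rendering of ``stability implies minimality'' via the trial state $z'=z^\dagger$, which collapses $\cmds{\spz}{z^\dagger}{z^\dagger}$ to $0$, usefully spells out what the paper leaves tacit.

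One step does not go through as you phrase it: ``after possibly passing to a further subsequence I may assume $s_n\notin\jump z$.'' Both $\jump z$ and $\{s_n\}$ are countable sets that may accumulate at $t$ from the relevant side, so the original sequence can lie entirely inside $\jump z$ (take, say, $\jump z \supset \{t+2^{-n}:n\in\N\}$ and $s_n=t+2^{-n}$); in that case no subsequence avoids $\jump z$. The paper's own proof carries out the analogous reduction in the phrase ``we may suppose $(s_n)_n\subset(0,T)\setminus\jump z$,'' so this is a shared imprecision rather than a deviation from the paper. Still, a fully rigorous treatment would need to address sequences landing in $\jump z$, where only the minimality condition \eqref{MINIMALITY} --- and not the stability \eqref{stab-VE} needed to invoke $<C>$ --- is available; handling that case by passing to the limit directly in \eqref{MINIMALITY} would require an upper-semicontinuity of $\calE$ in the $z$-argument that is not part of Assumptions $<A>$--$<C>$, which is precisely the obstruction your closing remark correctly identifies.
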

\begin{proof}
Let us fix $t\in [0,T)$. 
In order to show that  the only element  $\rli ut $ in  $ \argmin_{u\in \spu} \enet tu{\rli z t}$ is the right-limit of $u$ w.r.t.\ the $\sigma_\spu$-topology,
it is sufficient to show that, for all $(s_n)_n \subset (0,T)$ with $s_n\downarrow t$, there holds
$u(s_n)\to \rli ut $ in $(\spu,\sigma_\spu)$. 
Since $\jump z $ is at most countable, we may suppose that $(s_n)_n\subset (0,T)\setminus \jump z$.  
It follows from \eqref{sup-perto} and $<A.2>$ that there exists some $u^*\in \spu$ such that, up to a (not relabeled) subsequence,
 $u(s_n)  \to u^*$ in $(\spu,\sigma_\spu)$ as $n\to\infty$. Clearly, $z(s_n)\to \rli zt$ in $(\spz,\sigma_\spz)$.  By the closure of the stable set $\mathscr{S}_{\mathsf{D}_\spz}$, we conclude that $(t,u^*,\rli zt) \in \mathscr{S}_{\mathsf{D}_\spz}$. Then, $u^* \in  \argmin_{u\in \spu} \enet tu{\rli z t}$, which yields $u^* = \rli ut$. 
 \par
 The argument for  the existence of the left-limit $\lli ut$ at all $t\in (0,T]$ is completely analogous.
\end{proof}
\par
We recall the following \underline{characterization of Visco-Energetic solutions}.
\begin{proposition}{\cite[Prop.\ 3.8]{SavMin16}}
\label{prop:charact}
A curve $(u,z)\in  \mathrm{B}([0,T];\spu) \times  \BV_{\sigma_\spz,\mdsn{\spz}}([0,T];\spz)$ satisfying the $\cmdsn{\spz}$-stability condition \eqref{stab-VE} is a $\VE$ solution of the 
rate-independent system $\RIS$ with the viscous correction $\delta_\spz$ if and only if $z$ satisfies, in addition,
\begin{enumerate}
\item  the $(\mdsn{\spz},\vecostname)$-energy-dissipation upper estimate
\begin{equation}
\label{enue-VE}
\enet T{u(T)}{z(T)} + \Vari {\mdsn{\spz},\vecostname}{z}0{T} \leq \enet 0{u(0)}{z(0)} +\int_0^T \pwt s{u(s)}{z(s)} \dd s;
\end{equation}
\item the $\mdsn{\spz}$-energy-dissipation upper estimate 
\begin{equation}
\label{enue-EN}
\enet T{u(T)}{z(T)} + \Vari {\mdsn{\spz}}{z}0{T} \leq \enet 0{u(0)}{z(0)} +\int_0^T \pwt s{u(s)}{z(s)} \dd s,
\end{equation}
joint with the following jump conditions at every jump point $t\in \jump z$:
\begin{equation}
\label{jump-conditions}
\begin{aligned}
&
\enet t{\lli ut}{\lli zt} - \enet t{u(t)}{z(t)}  && = &&  \vecost t{\lli zt}{z(t)}
\\
& 
\enet t{u(t)}{z(t)} - \enet t{\rli ut}{\rli zt}  && =  &&  \vecost t{z(t)}{\rli zt}
\\
&
\enet t{\lli ut}{\lli zt} - \enet t{\rli ut}{\rli zt} &&  = &&  \vecost t{\lli zt}{\rli zt}\,.
\end{aligned}
\end{equation}
\end{enumerate}
\end{proposition}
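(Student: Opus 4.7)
The plan is to prove both implications by exploiting the additive decomposition
\[
\Vari{\mdsn{\spz},\vecostname}{z}{0}{T} = \Vari{\mdsn{\spz}}{z}{0}{T} + \Jvar{\Delta_{\vecostname}}{z}{0}{T}
\]
from \eqref{augm-tot-var}--\eqref{jump-Delta-e}, combined with a localization of the balance \eqref{enbal-VE} around each $t \in \jump z$.

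For the \emph{only if} direction, assuming $(u,z)$ is a $\VE$ solution, the estimate \eqref{enue-VE} is immediate from the equality \eqref{enbal-VE}, while \eqref{enue-EN} follows from $\Vari{\mdsn{\spz}}{z}{0}{T} \leq \Vari{\mdsn{\spz},\vecostname}{z}{0}{T}$. To produce the jump equalities \eqref{jump-conditions} at a fixed $t\in\jump z$, I would apply \eqref{enbal-VE} on the three subintervals $[0,t_n^-]$, $[0,t]$, $[0,t_n^+]$ with $t_n^\pm\notin\jump z$, $t_n^-\uparrow t$ and $t_n^+\downarrow t$. Using the $(\sigma_\spz,\mdsn{\spz})$-regulated structure of $z$, the minimality \eqref{MINIMALITY}, the $\sigma$-closure of the stable set $<C>$ and the lower semicontinuity from $<A.1>$ to identify $\lim\enet{t_n^\mp}{u(t_n^\mp)}{z(t_n^\mp)}$ with $\enet t{\lli ut}{\lli zt}$ and $\enet t{\rli ut}{\rli zt}$ respectively, the additivity of $\Vari{\mdsn{\spz},\vecostname}{z}{\cdot}{\cdot}$, and the vanishing of the power integrals on $[t_n^-,t]$ and $[t,t_n^+]$, pairwise subtraction of the three subinterval balances yields the first two equalities of \eqref{jump-conditions}. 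The third equality then follows from the subadditivity $\vecost t{\lli zt}{\rli zt}\leq\vecost t{\lli zt}{z(t)}+\vecost t{z(t)}{\rli zt}$ built into the $\inf$-definition \eqref{vecost}, together with the observation that $z(t)$ itself lies on an admissible transition from $\lli zt$ to $\rli zt$ realizing the matching cost.

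For the \emph{if} direction, I would match the assumed upper bound \eqref{enue-VE} with a lower bound assembled piece by piece on the continuity components of $[0,T]\setminus\jump z$. On such an interval $[a,b]$ and a partition $a=s_0<\cdots<s_N=b$, testing \eqref{stab-VE} at $s_{i-1}$ against the competitor $(u(s_i),z(s_i))$ and combining with the fundamental-theorem identity $\enet{s_{i-1}}{u(s_i)}{z(s_i)}=\enet{s_i}{u(s_i)}{z(s_i)}-\int_{s_{i-1}}^{s_i}\pwt\sigma{u(s_i)}{z(s_i)}\dd\sigma$ gives the telescoping lower estimate
\[
\enet{s_i}{u(s_i)}{z(s_i)} - \enet{s_{i-1}}{u(s_{i-1})}{z(s_{i-1})} \geq \int_{s_{i-1}}^{s_i}\pwt\sigma{u(s_i)}{z(s_i)}\dd\sigma - \cmds{\spz}{z(s_{i-1})}{z(s_i)}.
\]
Upon refinement of the partition, the $\mdsn{\spz}$-part of $\cmdsn{\spz}$ delivers $\Vari{\mdsn{\spz}}{z}{a}{b}$, while \eqref{will-be-checked} (ensured by $<B.3>$) shows that the $\corrs{\spz}{\cdot}{\cdot}$-part is infinitesimal along continuity; the conditional upper semicontinuity \eqref{usc-power} (or \eqref{FRANCFORT}) ensures the Riemann sums converge to $\int_a^b\pwt s{u(s)}{z(s)}\dd s$, producing $\enet b{u(b)}{z(b)}+\Vari{\mdsn{\spz}}{z}{a}{b}\geq\enet a{u(a)}{z(a)}+\int_a^b\pwt s{u(s)}{z(s)}\dd s$.

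At each $t\in\jump z$ the equalities \eqref{jump-conditions} supply the matching energy drop $\enet t{\lli ut}{\lli zt}-\enet t{\rli ut}{\rli zt}=\vecost t{\lli zt}{z(t)}+\vecost t{z(t)}{\rli zt}$, which is precisely the jump contribution of $\Vari{\mdsn{\spz},\vecostname}{z}{\cdot}{\cdot}$ at $t$. Summing the continuity-piece lower bounds with these jump equalities over the at most countable set $\jump z$ yields the sought lower bound on $[0,T]$; matched against \eqref{enue-VE} this closes the balance \eqref{enbal-VE} as an equality. The main obstacle I anticipate is the careful handling of the countable jump set during the limits $t_n^\pm\to t$: absolute summability via \eqref{sup-perto}, commutation of the lower semicontinuity of $\calE$ with the $\inf$ in the definition of $\vecostname$, and alignment of the subinterval energy limits with $(\lli ut,\lli zt)$ and $(\rli ut,\rli zt)$ through $<B.1>$, $<B.2>$ and $<C>$; a secondary subtlety is the infinitesimal $\corrs{\spz}{\cdot}{\cdot}$-to-$\mdsn{\spz}$ control in the Riemann-sum refinement, which is precisely what \eqref{will-be-checked} supplies.
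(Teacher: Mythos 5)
The paper itself offers no proof of this statement; it is imported verbatim from \cite[Prop.\ 3.8]{SavMin16}, so your proposal has to stand on its own. Your scaffolding --- the decomposition \eqref{augm-tot-var}, localization of the balance \eqref{enbal-VE} around each jump for the ``only if'' part, and the stability-driven Riemann-sum lower estimate on continuity pieces for the ``if'' part --- is the right one, but there is a genuine gap: you never state or use the transition-cost lower bound
\[
\tcost{\VE}{t}{\teta}{E} \;\geq\; \rene t{\teta_z(E^-)} - \rene t{\teta_z(E^+)},
\qquad\text{hence}\qquad
\vecost t{z_-}{z_+} \;\geq\; \rene t{z_-} - \rene t{z_+},
\]
valid for \emph{every} admissible transition. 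It follows by telescoping $\rene t{\teta_z(s_{j-1})}-\rene t{\teta_z(s_j)}\leq \cmds{\spz}{\teta_z(s_{j-1})}{\teta_z(s_j)}+\rstabt t{\teta_z(s_{j-1})}$ (which is nothing but the definition of $\rstabname$ as a supremum) over suitably chosen finite subsets of $E$, and it is the linchpin of the whole proposition. Without it you cannot close the third jump condition in the ``only if'' direction: subadditivity plus the first two equalities only gives $\vecost t{\lli zt}{\rli zt}\leq \enet t{\lli ut}{\lli zt}-\enet t{\rli ut}{\rli zt}$, and the fact that \emph{some} transition through $z(t)$ realizes this value does not exclude a cheaper transition avoiding $z(t)$. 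More importantly, this bound is exactly what supplies the jump contributions $\vecost t{\lli zt}{z(t)}\geq \enet t{\lli ut}{\lli zt}-\enet t{u(t)}{z(t)}$ (and its twin) in the lower energy estimate needed for the converse.

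Second, your ``if'' direction conflates the two hypotheses: you assume the upper estimate \eqref{enue-VE} of item (1) but then invoke the jump equalities \eqref{jump-conditions} of item (2). The proposition asserts two separate characterizations, so you must either prove (1)$\Rightarrow$(VE) from \eqref{enue-VE} alone --- replacing the jump equalities by the one-sided cost inequalities above --- or prove (2)$\Rightarrow$(1) by summing the jump conditions over $\jump z$ to upgrade \eqref{enue-EN} to \eqref{enue-VE}, and then conclude. Finally, ``summing the continuity-piece lower bounds over the components of $[0,T]\setminus\jump z$'' is not a well-defined operation when $\jump z$ is countable and possibly dense: the lower estimate has to be assembled from finite partitions, treating finitely many jumps exactly and controlling the remainder by a supremum procedure, and one must also note that the Riemann-sum argument produces the \emph{corrected} increments $\cmds{\spz}{z(s_{i-1})}{z(s_i)}$, whose $\corrsn{\spz}$-part does \emph{not} vanish under refinement across a jump and must be absorbed into $\Jvar{\Delta_{\vecostname}}{z}{0}{T}$ rather than into $\Vari{\mdsn{\spz}}{z}0T$.
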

\par
Let us now gain further insight into the description of the system behavior at jumps provided by the $\VE$ concept,
via the  properties of 
\underline{Optimal Jump Transitions}. 
We recall that (cf.\ \cite[Def.\ 3.13]{SavMin16}), given $t\in [0,T]$ and $z_-, \, z+\in \spz$, 
an admissible transition curve  $\teta_z \in \mathrm{C}_{\sigma_\spz,\mdsn{\spz}}(E;\spz)$, with $E\Subset \R$, 
is an optimal transition between $z_-$ and $z_+$ at time $t\in [0,T]$ if it is  a minimizer for 
$\vecost t{z_-}{z_+}$, namely
\begin{equation}
\label{def:OJT}
\teta_z(E^-) = z_-, \quad \teta_z(E^+) = z_+,  \quad   \tcost{\VE}{t}{\teta_z}{E} = \vecost t{z_-}{z_+}\,.
\end{equation}
Furthermore,  we say that $\teta_z$ is  a
\begin{compactenum}
\item \emph{sliding transition}, if $\rstab t{\teta_z(s)} =0$ for  all $s\in E$;
\item \emph{viscous transition}, if  $\rstab t{\teta_z(s)} >0$ for  all $s\in E \setminus \{ E^-,E^+\}$.
\end{compactenum}
It has been shown in \cite[Rmk.\ 3.15, Cor.\ 3.17]{SavMin16} that,
 for a viscous transition $\teta_z$  between $z_-$ and $z_+$
 the compact set $E \setminus \{ E^-,E^+\}$ is discrete, i.e.\ all of its points are isolated: namely, $\teta_z$ is a \emph{pure jump} transition. In fact, $\teta_z$ may be represented as a finite, or countable, sequence $(\teta_n^z)_{n\in O}$, with $O $ a compact interval of $ \mathbb{Z}$, satisfying
 (recall the definition \eqref{minimal-set} of the set $M(t,z)$)
 \begin{equation}
 \label{minimum-jump}
 \teta_n^z \in M(t,\teta_{n-1}^z) = \mathrm{Argmin}_{z'\in \spz}  \left( \calI(t,z') {+}  \cmds{\spz}{\teta_{n-1}^z}{z'} \right) 
  \quad \text{for all } n \in O\setminus\{O^-\}.
 \end{equation}
 Furthermore, it has been  proved  in \cite[Prop.\ 3.18]{SavMin16} that any optimal jump transition can be canonically decomposed into (at most) countable collections of sliding and viscous, pure jump transitions. 
 Finally, it has been shown in \cite[Thm.\ 3.14]{SavMin16} that,  at every jump point $t$ of a $\VE$ solution $z$ there exists 
 an optimal jump transition $\teta_z$ between $\lli zt$ and $\rli zt$ such that $\teta_z(s) = z(t)$ for some $s\in E$. 
\par
We conclude this section by giving an existence result  for $\VE$ solutions, 
 proved in \cite[Thm.\ 4.7]{SavMin16}. For completeness,  in the statement   below we  also encompass   the convergence result (cf.\ 
  \cite[Thm.\ 7.2]{SavMin16})
  for  the  (left-continuous)  piecewise constant interpolants
 \begin{equation}
 \label{pwc-interp}
 \pwc Z\tau: [0,T]\to \spu, \quad \pwc Z\tau(0): = z_0, \quad \pwc Z\tau(t): = z^n \quad \text{for } t \in (t_\tau^{n-1},t_\tau^n],
\quad n=1,\ldots, N_\tau
 \end{equation}
 associated with the discrete solutions $(z_\tau^n)_{n=1}^{N_\tau}$ of the time-incremental minimization problem \eqref{tim:VE-true}. 
 We shall discuss the convergence of the interpolants $( \pwc U{\tau})_\tau$  of  the 
 elements $(u_{\tau}^n)_{n=1}^{N_{\tau}}$, with $u_{\tau}^n $  minimizers for  time-incremental minimization problem  \eqref{tim:VE-true},
 right after the statement of Thm.\ \ref{thm:exists-VE}. 
\begin{theorem}{\cite[Thm.\ 4.7]{SavMin16}}
\label{thm:exists-VE} 
Under Assumptions $<T>, $ $  <A>,$ $  <B>,  $ and $<C>,$  let $z_0   \in \domene z$. Then, for every  sequence $(\tau_k)_k$ of time steps
 with $\tau_k\down 0$ as $k\to\infty$ there exist a (not relabeled) subsequence $ (\pwc Z{\tau_k})_k$
  and $z \in \BV_{\sigma_\spz,\mdsn{\spz}}([0,T];\spz)$
  such  that 
  \begin{enumerate}
  \item 
  $z(0)=z_0$,  and 
\begin{equation}
\label{convergence-of-z}
\pwc Z{\tau_k}(t) \wsigmaz  z(t) \qquad \text{in $\spz$ for all $t\in [0,T]$};
\end{equation}
\item 
there exists $u \in \mathrm{B}([0,T];\spu) $ such that $(u,z)$ is a 
 $\VE$ solution to the rate-independent system $\RIS$, with the viscous correction $\delta$.
 \end{enumerate}
 \end{theorem}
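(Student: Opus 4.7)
The natural plan is to follow a Helly–selection plus limit-passage scheme: \emph{(i)} show that scheme \eqref{tim:VE-true} admits discrete solutions; \emph{(ii)} derive a priori bounds on the energy and on the $\mdsn{\spz}$-variation; \emph{(iii)} extract a subsequence of $(\pwc{Z}{\tau})_{\tau}$ converging pointwise in $\sigma_\spz$ to a candidate limit $z$; \emph{(iv)} select $u(t)\in\argmin_{u\in\spu}\enet{t}{u}{z(t)}$ measurably; \emph{(v)} verify the $\cmdsn{\spz}$-stability \eqref{stab-VE} at the limit via assumption $<C>$; \emph{(vi)} establish the $(\mdsn{\spz},\vecostname)$-upper energy estimate; and finally invoke Proposition \ref{prop:charact} to upgrade this to the full energy-dissipation balance \eqref{enbal-VE} \emph{without appealing to a chain rule}. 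The last point is what makes the $\VE$ approach tractable: the global character of \eqref{stab-VE} forces the lower inequality in \eqref{enbal-VE} for free, once the upper one is known.

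\textbf{Discrete solutions and a priori bounds.} Existence of $(\ds{u}{\tau}{n},\ds{z}{\tau}{n})$ in \eqref{tim:VE-true} follows from the direct method: the functional is $\sigma$-lower semicontinuous by $<A.1>$ together with lower semicontinuity of $\mdsn{\spz}$ and $\delta_\spz$, and has $\sigma_\R$-sequentially compact sublevels by $<A.2>$ (once a competitor is used to produce an upper bound on $\calF$). Comparing the minimizer against $(u,\ds{z}{\tau}{n-1})$ for $u\in\argmin_{\spu}\enet{\ds{t}{\tau}{n}}{\cdot}{\ds{z}{\tau}{n-1}}$, which exists by \eqref{reduced-notempty}, gives the discrete energy inequality
\begin{equation*}
\enet{\ds{t}{\tau}{n}}{\ds{u}{\tau}{n}}{\ds{z}{\tau}{n}} + \mds{\spz}{\ds{z}{\tau}{n-1}}{\ds{z}{\tau}{n}} + \delta_\spz(\ds{z}{\tau}{n-1},\ds{z}{\tau}{n}) \le \enet{\ds{t}{\tau}{n-1}}{\ds{u}{\tau}{n-1}}{\ds{z}{\tau}{n-1}} + \int_{\ds{t}{\tau}{n-1}}^{\ds{t}{\tau}{n}} \pwt{s}{\ds{u}{\tau}{n-1}}{\ds{z}{\tau}{n-1}} \dd s.
\end{equation*}
Summing over $n$ and using the power control \eqref{power-control} together with a discrete Gronwall argument yields uniform bounds on $\sup_t \pertt{t}{\pwc{U}{\tau}(t)}{\pwc{Z}{\tau}(t)}$ and on $\Vari{\mdsn{\spz}}{\pwc{Z}{\tau}}{0}{T}$. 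Moreover, by comparing the minimizer to an arbitrary $(u',z')$ and using the triangle inequality for $\mdsn{\spz}$ together with $<B.1>$ to absorb the $\delta_\spz$-terms, one finds $(\ds{t}{\tau}{n},\ds{u}{\tau}{n},\ds{z}{\tau}{n})\in\stab{\cmdsn{\spz}}$ for every $n$, i.e.\ discrete $\cmdsn{\spz}$-stability.

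\textbf{Compactness, selection, and stability at the limit.} An abstract Helly selection in the asymmetric setting (made possible by \eqref{d-z-sep} and the first-countability in $<T>$) produces a subsequence and a pointwise limit $z\in\BV_{\sigma_\spz,\mdsn{\spz}}([0,T];\spz)$ with $z(0)=z_0$ and $\Vari{\mdsn{\spz}}{z}{0}{T}<\infty$. Assumption $<A.2>$ gives further $\sigma_\spu$-compactness at each $t$, and the Souslin property \eqref{Souslin} together with a von Neumann–Aumann measurable selection produces the map $u\in\mathrm{B}([0,T];\spu)$ with $u(t)\in\argmin\enet{t}{\cdot}{z(t)}$. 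For each $t\in[0,T]\setminus\jump{z}$, applying $<C>$ to $\ds{t}{\tau}{n(\tau,t)}\to t$ with the discretely stable triples passes \eqref{Qstable} with $Q=0$ to the limit, yielding \eqref{stab-VE}.

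\textbf{Energy balance: the main obstacle.} The delicate point is to identify the discrete variation plus $\delta_\spz$-contributions with the augmented functional $\Vari{\mdsn{\spz},\vecostname}{z}{0}{t}$ featuring the jump cost $\vecostname$ from \eqref{vecost}. Summing the discrete energy inequality up to the index closest to $t$ gives
\begin{equation*}
\enet{t}{u(t)}{z(t)} + \liminf_{\tau\downarrow 0}\sum_{n}\bigl(\mds{\spz}{\ds{z}{\tau}{n-1}}{\ds{z}{\tau}{n}} + \delta_\spz(\ds{z}{\tau}{n-1},\ds{z}{\tau}{n})\bigr) \le \enet{0}{u(0)}{z(0)} + \int_0^t \pwt{s}{u(s)}{z(s)} \dd s,
\end{equation*}
where the $\liminf$ of the power term follows from \eqref{usc-power} (or Prop.\ \ref{prop:GF}) once $\sigma$-convergence and energy convergence at a.e.\ $t$ are established via the stability. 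To bound the left-hand sum from below by $\Vari{\mdsn{\spz},\vecostname}{z}{0}{t}$ one localizes around each limiting jump point $t^*\in\jump{z}$: the discrete trajectory on a cluster of indices accumulating at $t^*$ serves, after reparametrization, as an admissible competitor in the transition cost $\tcostname{\VE}$ defining $\vecost{t^*}{\lli{z}{t^*}}{\rli{z}{t^*}}$; the $\mdsn{\spz}$-increments pass to $\Vars{\mdsn{\spz}}{\teta_z}{E}$, the genuine jumps (where $\delta_\spz$ is not small compared to $\mds{\spz}{\ds{z}{\tau}{n-1}}{\ds{z}{\tau}{n}}$) contribute to $\Gap{\delta_\spz}{\teta_z}{E}$, and the residual-stability term $\rstab{t^*}{\cdot}$ is recovered as the gap between consecutive minimizers via \eqref{minimum-jump} together with $<B.1>$. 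Away from jumps, $<B.2>$ and $<B.3>$ ensure that the $\delta_\spz$-contributions are negligible compared to $\mds{\spz}{\cdot}{\cdot}$. This yields \eqref{enue-VE}. Trivially \eqref{enue-VE} implies \eqref{enue-EN}, and together with the $\cmdsn{\spz}$-stability established above, Proposition \ref{prop:charact} converts both upper estimates into the energy-dissipation equality \eqref{enbal-VE}. The hard part is precisely the reconstruction of $\vecostname$ at jumps from the discrete sums, since it requires matching the way consecutive minimizers of \eqref{tim-VE} concentrate at a limiting jump with the abstract infimum in \eqref{vecost}.
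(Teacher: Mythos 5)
You should first note that the paper itself does not prove Theorem \ref{thm:exists-VE}: it is quoted from \cite[Thms.\ 4.7, 7.1, 7.2]{SavMin16}, so the only internal material to compare against is the paper's commentary (the remark in the Introduction that global stability plus the \emph{upper} energy estimate yields \eqref{enbal-VE} without a chain rule, and Proposition \ref{prop:charact}). At that level your architecture — discrete minimization, a priori bounds, Helly selection, measurable selection of $u$ via the Souslin property, stability via $<C>$, upper estimate, then Proposition \ref{prop:charact} — is the right one. Two of your steps, however, are wrong or critically incomplete as stated. First, the claim that the discrete minimizers are exactly $\cmdsn{\spz}$-stable is false in general. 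Testing the minimality of $(\ds u\tau n,\ds z\tau n)$ against $(u',z')$ and using the triangle inequality for $\mdsn{\spz}$ gives
\begin{equation*}
\enet {\ds t\tau n}{\ds u\tau n}{\ds z\tau n} \;\leq\; \enet {\ds t\tau n}{u'}{z'} + \mds{\spz}{\ds z\tau n}{z'} + \corrs{\spz}{\ds z\tau{n-1}}{z'} - \corrs{\spz}{\ds z\tau{n-1}}{\ds z\tau n},
\end{equation*}
and the viscous term stays anchored at $\ds z\tau{n-1}$: since $\delta_\spz$ satisfies no triangle inequality and $<B.1>$ only covers $\mdsn{\spz}$-null perturbations, this cannot be rewritten as \eqref{Qstable} with $Q=0$ at $(\ds t\tau n,\ds z\tau n)$ (already for $\delta_\spz=\tfrac\mu2\tilde{\mathsf d}_\spz^2$ a cross term $\mu\,\tilde{\mathsf d}_\spz(\ds z\tau{n-1},\ds z\tau n)\,\tilde{\mathsf d}_\spz(\ds z\tau n,z')$ survives). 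The discrete states are only \emph{quasi}stable, which is exactly why $<C>$ is formulated for every $Q\geq 0$; passing to exact stability of the limit at non-jump times requires showing that the quasistability error vanishes, and this step is missing.

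Second, the discrete energy inequality you display is too weak to close the argument. The minimum value of the incremental problem equals $\mathscr{Y}(\ds t\tau n,\ds z\tau{n-1})=\rene{\ds t\tau n}{\ds z\tau{n-1}}-\rstabt{\ds t\tau n}{\ds z\tau{n-1}}$, so the sharp telescoping estimate carries the extra nonnegative summand $\rstabt{\ds t\tau n}{\ds z\tau{n-1}}$ on the left-hand side. Without it, the $\liminf$ of your discrete dissipation sum can at best dominate the $\Varname{\mdsn{\spz}}$- and $\mathrm{GapVar}$-parts of the transition cost \eqref{ve-tcost}, but not the term $\sum_{s}\rstab t{\teta_z(s)}$, hence it cannot dominate $\Vari{\mdsn{\spz},\vecostname}{z}{0}{t}$; your closing remark that the residual-stability contribution is "recovered as the gap between consecutive minimizers" is correct in spirit but inconsistent with the inequality you actually derived. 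Finally, the identification of the clustered discrete increments at a limiting jump with an admissible competitor for $\vecost{t^*}{\lli z{t^*}}{\rli z{t^*}}$ is not a routine reparametrization: it requires the compactness theorem for transition curves on varying compact sets (\cite[Thm.\ 5.4]{SavMin16}), the very tool this paper deploys in Section \ref{ss:6.2} for the analogous lower-semicontinuity statement. You correctly flag this as the hard part, but naming it is not supplying it.
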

 \noindent
 In fact, the curve $u$ 
 in the above statement
 is obtained as a measurable selection in $\mathrm{Argmin}_{u  \in \spu} \enet t{u}{z(t)}$. 
 It is not, in general, related to the limit of the  piecewise constant interpolants $(\pwc U{\tau_k})_k$. 
However, if, in addition, property \eqref{unique-element} holds,
and the functional $\calE$ fulfills the following $\Gamma$-$\limsup$ estimate, i.e.\ 
\begin{equation}
\label{gamma-limsup}
\begin{gathered}
\text{for all $(t_k)_k \subset [0,T]$ and $(z_k)_k\subset \spz$ with } t_k\to t,  \ z_k \wsigmaz z \text{ in } \spz \text{ then } 
\\
\text{for all } v \in \spu \text{ there exists } (v_k)_k\subset \spu \text{ such that } 
\limsup_{k\to\infty} \enet {t_k}{v_k}{z_k} \leq \enet tvz,
\end{gathered}
\end{equation}
then it is possible to prove convergence to the curve $u$. Namely, that 
\begin{equation}
\label{u-convergence}
\pwc U{\tau_k}(t) \wsigmau  u(t) \text{ in $\spu$ for all $t\in [0,T]$}.
\end{equation}
To check this, we may observe that from   \eqref{tim:VE-true} it follows that 
\begin{equation}
\label{in-argmin}
\pwc U{\tau_k}(t) \in \mathrm{Argmin}_{u  \in \spu} \enet {\pwc t{\tau_k}(t)}{u}{\pwc Z{\tau_k}(t)} \qquad \text{for all $t\in (0,T]$}
\end{equation}
 (with 
$\pwc t{\tau_k}$ the left-continuous  piecewise constant interpolant associated with the partition of $[0,T]$). 
From the energy bound $\calF_0(\pwc U{\tau_k}(t),\pwc Z{\tau_k}(t) ) \leq C$ for a constant independent of $k\in \N $ and $t\in [0,T]$,
cf.\ \cite[Thm.\ 7.1]{SavMin16},  combined with Assumption $<A.2>$,
we infer that there exists a compact subset $\mathbf{U} \Subset U$ such that 
$\pwc U{\tau_k}(t) \in \mathbf{U}$ for all $t\in [0,T]$ and $k\in \N$. Then, for all $t\in [0,T]$ there exists $u_*(t) \in U $ such that, along a (not relabeled) subsequence possibly depending on $t$, there holds 
\begin{equation}
\label{depending-on-t-cvg}
\pwc U{\tau_k}(t)\wsigmau u_*(t).
\end{equation}  
Combining \eqref{in-argmin} and \eqref{depending-on-t-cvg}
 with \eqref{convergence-of-z} and taking into account the lower semicontinuity $<A.1>$
we find that
$
\enet {t}{u_*(t)}{z(t)} \leq \liminf_{k\to\infty}  \enet {\pwc t{\tau_k}(t)}{\pwc U{\tau_k}(t)}{\pwc Z{\tau_k}(t)}  \leq
 \liminf_{k\to\infty}  \enet {\pwc t{\tau_k}(t)}{v}{\pwc Z{\tau_k}(t)}  $ for all $v\in \spu$ and all $t \in [0,T]$. Exploiting 
 \eqref{gamma-limsup}, we conclude that $u_*(t) \in 
 \argmin_{u\in \spu} \enet t{u(t)}{z(t)}$. Since the latter set is a singleton
 by \eqref{unique-element}, 
  convergence \eqref{depending-on-t-cvg} holds for the whole sequence $(\tau_k)_k$, and we conclude \eqref{u-convergence}. 

\section{When Visco-Energetic solutions are Energetic: the case of perfect plasticity}
\label{s:perfect-plas}
The following result characterizes the situation in which  $\VE$ solutions turn out to be $\ENE$ solutions as well. Note that it holds under the sole  conditions 
$<A.1>$  and  $<A.3>$.
\begin{proposition}
\label{prop:VE=E}
 Assume $<T>$, $<A.1>$,  and  $<A.3>$.   Then, a Visco-Energetic solution
$(u,z)$ of the rate-independent system $\RIS$ is an Energetic solution if and only if it satisfies the global stability condition \eqref{globStab} at every $t\in [0,T]$. 
In that case, at   every jump point $t\in \mathrm{Jump}_z$   the curves $(u,z)$ fulfill the jump conditions 
\begin{equation}
\label{E-jump-conds}
\begin{aligned}
&
\enet t{\lli ut}{\lli zt} - \enet t{u(t)}{z(t)}  =   \mds{\spz}{\lli zt}{z(t)}, \\
&
\enet t{u(t)}{z(t)} - \enet t{\rli ut}{\rli zt}  =   \mds {\spz}{z(t)}{\rli zt}. 
\end{aligned}
\end{equation}
\end{proposition}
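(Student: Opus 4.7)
The $(\Rightarrow)$ implication is immediate from the definition of Energetic solution. For $(\Leftarrow)$, assume $(u,z)$ is Visco-Energetic and satisfies \eqref{globStab} at every $t\in[0,T]$. Minimality is already encoded in \eqref{MINIMALITY} and \eqref{globStab} is assumed, so only the $\ENE$-balance \eqref{enbal-E} needs to be derived. Exploiting the decomposition $\Vari{\mdsn{\spz},\vecostname}{z}{0}{t} = \Vari{\mdsn{\spz}}{z}{0}{t} + \Jvar{\Delta_{\vecostname}}{z}{0}{t}$ with $\Delta_{\vecostname}\geq 0$, together with the already available $\VE$-balance \eqref{enbal-VE}, it is enough to establish the Energetic jump identities \eqref{E-jump-conds} at every $t\in\jump z$. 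Via the $\VE$-jump identities of Proposition~\ref{prop:charact}, these reduce to $\vecost t{\lli zt}{z(t)} = \mds{\spz}{\lli zt}{z(t)}$ and $\vecost t{z(t)}{\rli zt} = \mds{\spz}{z(t)}{\rli zt}$; the inequalities $\vecost \geq \mds{\spz}$ hold trivially by admissibility of the two-point transition in \eqref{vecost}, so only the reverse bounds are at stake.

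\textbf{Right-limit side.} Testing \eqref{globStab} at $t$ against $(\rli u t,\rli zt)$ yields $\enet t{u(t)}{z(t)} - \enet t{\rli u t}{\rli zt} \le \mds{\spz}{z(t)}{\rli zt}$. Combined with the $\VE$-jump identity $\enet t{u(t)}{z(t)} - \enet t{\rli u t}{\rli zt} = \vecost t{z(t)}{\rli zt}$, this gives $\vecost t{z(t)}{\rli zt}\le \mds{\spz}{z(t)}{\rli zt}$, closing that side.

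\textbf{Left-limit side, and the main technical step.} Testing \eqref{globStab} at $t$ directly against $(\lli u t,\lli zt)$ would produce the wrong direction because of the asymmetry of $\mdsn{\spz}$, so I would instead apply \eqref{globStab} at times $s\notin\jump z$ with $s\uparrow t$, tested against $(u(t),z(t))$,
\begin{equation*}
\enet s{u(s)}{z(s)} \le \enet s{u(t)}{z(t)} + \mds{\spz}{z(s)}{z(t)},
\end{equation*}
and pass to the $\limsup$. The key input is the identification $\lim_{s\uparrow t,\,s\notin\jump z}\enet s{u(s)}{z(s)} = \enet t{\lli u t}{\lli zt}$, which I would read off from \eqref{enbal-VE} itself. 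Writing the energy as $\enet 0{u(0)}{z(0)} + P(s) - V(s)$ with $P(s):=\int_0^s\pwt r{u(r)}{z(r)}\dd r$ absolutely continuous and $V(s):=\Vari{\mdsn{\spz},\vecostname}{z}{0}{s}$, the standard left-jump decomposition for $(\sigma_\spz,\mdsn{\spz})$-regulated curves gives the left-jump of $\Vari{\mdsn{\spz}}{z}{0}{\cdot}$ at $t$ equal to $\mds{\spz}{\lli zt}{z(t)}$ and that of $\Jvar{\Delta_{\vecostname}}{z}{0}{\cdot}$ equal to $\Delta_{\vecostname}(t,\lli zt,z(t)) = \vecost t{\lli zt}{z(t)} - \mds{\spz}{\lli zt}{z(t)}$, summing to $\vecost t{\lli zt}{z(t)}$; the $\VE$-jump identity $\enet t{\lli u t}{\lli zt} = \enet t{u(t)}{z(t)} + \vecost t{\lli zt}{z(t)}$ then produces the desired left-limit of the energy. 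Passing to $\limsup$ in the stability inequality, using assumption $<A.3>$ to send $\enet s{u(t)}{z(t)}\to \enet t{u(t)}{z(t)}$ and the triangle estimate $\mds{\spz}{z(s)}{z(t)}\le \mds{\spz}{z(s)}{\lli zt}+\mds{\spz}{\lli zt}{z(t)}$ combined with $\mds{\spz}{z(s)}{\lli zt}\to 0$ from \eqref{sigma-d-reg-b}, I conclude $\enet t{\lli u t}{\lli zt} \le \enet t{u(t)}{z(t)} + \mds{\spz}{\lli zt}{z(t)}$, whence $\vecost t{\lli zt}{z(t)}\le \mds{\spz}{\lli zt}{z(t)}$ and equality. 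The most delicate point is the left-jump decomposition of $V$, which relies in an essential way on the regulated property \eqref{sigma-d-reg-b} and on the summability of the $\Delta_{\vecostname}$-contributions guaranteed by \eqref{sup-perto}; once it is in place, the collapse of $\Vari{\mdsn{\spz},\vecostname}$ onto $\Vari{\mdsn{\spz}}$ turns \eqref{enbal-VE} into \eqref{enbal-E}.
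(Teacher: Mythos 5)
Your proof is correct, but it takes a genuinely different route from the paper's. The paper first notes that \eqref{enbal-VE} together with $\Varname{\mdsn{\spz},\vecostname}\geq\Varname{\mdsn{\spz}}$ gives the Energetic upper estimate \eqref{enue-EN}, then invokes the standard ``global stability plus upper energy estimate implies energy balance'' lemma (\cite[Prop.\ 2.1.23]{MieRouBOOK} or \cite[Lemma 6.2]{SavMin16}) to conclude that $(u,z)$ is Energetic, and only afterwards compares \eqref{enbal-VE} with \eqref{enbal-E} to read off $\Delta_{\vecostname}=0$ at jumps and hence \eqref{E-jump-conds}. You reverse the order: you establish \eqref{E-jump-conds} first --- the right-limit identity by testing \eqref{globStab} at $t$ itself against $(\rli ut,\rli zt)$, the left-limit identity by testing \eqref{globStab} at times $s\uparrow t$ against $(u(t),z(t))$ and identifying $\lim_{s\uparrow t}\enet s{u(s)}{z(s)}=\enet t{\lli ut}{\lli zt}$ via the left-jump decomposition of the augmented variation in \eqref{enbal-VE} combined with the jump identities of Proposition~\ref{prop:charact} --- and then observe that the vanishing of all $\Delta_{\vecostname}$-contributions turns \eqref{enbal-VE} literally into \eqref{enbal-E}. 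Your route buys independence from the external lower-estimate lemma, exploiting that the $\VE$ balance is already an equality; what it costs is the left-jump computation $\lim_{s\uparrow t}\Vari{\mdsn{\spz}}{z}{s}{t}=\mds{\spz}{\lli zt}{z(t)}$, which you assert as ``standard'' but which, for the asymmetric quasi-distance, genuinely needs both the lower semicontinuity of $\mdsn{\spz}$ (for $\geq$) and $\sup_{r\in[s,t)}\mds{\spz}{z(r)}{\lli zt}\to0$ from \eqref{sigma-d-reg-b} (for $\leq$); spelling this out would make the argument fully self-contained. Both proofs are valid under the stated hypotheses.
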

\begin{proof}
Clearly, 
if $(u,z)$ is an $\ENE $ solution, then  \eqref{globStab} holds.
\par
 Conversely,  
let $(u,z)$ be a  $\VE $ solution complying with  \eqref{globStab}.
Since
$\Vari {\mdsn{\spz},\vecostname}{z}0{t} \geq \Vari {\mdsn{\spz}}{z}0{t} $ for all $t\in [0,T]$, 
from the  energy-dissipation balance \eqref{enbal-VE}
we deduce that  $(u,z)$ fulfills the Energetic energy-dissipation upper estimate
\eqref{enue-EN} on $ [0,t]$. Then, taking into account  \eqref{globStab},  we may apply either 
\cite[Prop.\ 2.1.23]{MieRouBOOK} or \cite[Lemma 6.2]{SavMin16}, mimicking the argument of the proof of Thm.\ 6.5 therein.  In this way we conclude that 
$(u,z)$ is an Energetic solution.
Hence, comparing \eqref{enbal-VE} and \eqref{enbal-E} we ultimately find 
\[
  \Vari {\mdsn{\spz}}{z}0{t} =  \Vari {\mdsn{\spz},\vecostname}{z}0{t} 
  \stackrel{\eqref{augm-tot-var}}{=}   \Vari{\mdsn \spz}{z}{0}{t} +  \Jvar {\Delta_{\vecostname}}{z}{0}{t} \quad \text{for all $t\in[0,T]$}\,.
  \]
Therefore, at every $t\in \mathrm{Jump}_z$ there holds
$ \Delta_{\vecostname}(t,\lli zt, z(t)) =  \Delta_{\vecostname}(t,z(t),\rli z t) =0$, i.e.\
\begin{equation}
\label{costs=dists}
\vecost t{\lli z t}{z(t)} = \mds{\spz}{\lli z t}{z(t)} \quad \text{ and  } \quad 
\vecost t{z(t)}{\rli zt} = \mds{\spz}{z(t)}{\rli zt}.
\end{equation} 
Combining \eqref{costs=dists} with the Visco-Energetic jump conditions \eqref{jump-conditions} we immediately deduce \eqref{E-jump-conds}. 
\end{proof}
\noindent Small-strain associative elastoplasticity, with the Prandtl-Reuss flow rule 
(without hardening) for the plastic strain, provides an example of a rate-independent system to which 
Proposition \ref{prop:VE=E} applies, cf.\ Thm.\ \ref{thm:pp} ahead.
\par
Before entering into details, we fix the following
\begin{notation}
\label{not:3.1}
\upshape
 We will use the symbol
 $\bbM^{d\times d}$  
for  the space of $d{\times} d$ 
  matrices, endowed with the  Frobenius inner product 
$\eta : \xi : = \sum_{i j} \eta_{ij} \xi_{ij}$ for two matrices $\eta= (\eta_{ij})$ and $\xi = (\xi_{ij})$.
We will denote by $|\cdot|$ the 
 induced the matrix norm 
 and,
   in accordance with Notation \ref{gen-not}, by  $\overline{B}_r$  the closed ball  with radius $r$ centered  at $0$ in $\bbM_\sym^{d\times d }$. The latter symbol denotes
 the subspace of symmetric matrices, while $\bbM_\dev^{d\times d}$ stands for the subspace of symmetric matrices with null trace. In fact, 
 every $\eta \in \bbM_\sym^{d\times d}$ can be written as 
$
\eta = \eta_\dev+ \frac{\mathrm{tr}(\eta)}d I
$
with $\eta_\dev$ the orthogonal projection of $\eta$ into $\bbM_\dev^{d\times d} $. We will refer to $\eta_\dev$ as the deviatoric part of $\eta$.  
With the symbol $\odot$ we will  denote the symmetrized tensor product  of two vectors $a,\, b \in \R^d$, defined as the symmetric matrix with entries $ \frac{a_ib_j + a_j b_i}2$.
Finally,
\[
\BD(\Omega;\R^d) : = \{ \tilde{u}\in L^1(\Omega;\R^d)\, : \ \eps(\tilde{u}) \in \M(\Omega;\bbM_\sym^{d\times d})\}
\]
 is the space of functions with bounded deformation,  such that the (distributional) strain tensor $\eps(\tilde{u})$  is a Radon measure  on $\Omega$, valued in $\bbM_\sym^{d\times d}$, and 
\[
 \M(\Omega{\cup}\Gdir;\bbM_\dev^{d\times d}) \text{ is the space of
($\bbM_\dev^{d\times d}$-valued) Radon measures on $\Omega{\cup}\Gdir$.}
\]
\end{notation}
 The PDE system governing perfect plasticity,
formulated in a (bounded, Lipschitz)  domain $\Omega \subset \R^d$  (the reference configuration)
 consists of 
 \begin{compactitem}
 \item[-]
the equilibrium equation 
\begin{subequations}
\label{perf-plast-syst}
\begin{equation}
\label{perf-plast-eq}
-\mathrm{div}(\mathbb{C} e) = f \quad \text{ in } \Omega \times (0,T),
\end{equation}
where $f$ is a time-dependent body force,  $\bbC$ is the (symmetric, positive definite)  elasticity tensor, $e$ the elastic strain, which enters into the \emph{additive} decomposition 
of the  (symmetric) linearized strain 
tensor $\eps(\tilde{u}) = \tfrac12 (\nabla \tilde{u} +\transp{\nabla \tilde{u}} )$ (with $\tilde{u}:\Omega \to \R^d$ the displacement and $\transp{A}$ the transpose of a matrix $A$),
into an elastic  and a plastic part, i.e.
\begin{equation}
\label{el-pl-decomp}
\eps(\tilde{u}) = e+p  \quad \text{ in } \Omega \times (0,T);
\end{equation}
\item[-] the flow rule for the plastic tensor $p$
\begin{equation}
\label{perf-plast-flow-rule}
\partial \mathrm{R}(\dot p) \ni  \sigma_\dev \quad \text{ in } \Omega \times (0,T),
\end{equation}
where
$\sigma_\dev $ is the deviatoric part of the stress 
 $\sigma : = \mathbb{C} e$, the $1$-homogeneous dissipation potential
 $ \mathrm{R}$ is the support function 
 of the   closed convex  subset $K\subset \bbM_{\dev}^{d\times d}$ to which the (deviatoric part of the) stress is constrained to belong, and $\partial \mathrm{R}:  \bbM_{\dev}^{d\times d} \rightrightarrows  \bbM_{\dev}^{d\times d} $ is the convex analysis subdifferential of $\mathrm{R}$. 
  \end{subequations}
 \end{compactitem}
 \begin{subequations}
 \label{ass:perf-plast}
 Along the footsteps of \cite{DMDSMo06QEPL}, 
  we will suppose hereafter
that
\begin{equation}
 \label{assK-pp}
 \overline{B}_{r_k} \subset K \subset \overline{B}_{R_K} \quad \text{for some } 0<r_K\leq R_K.
 \end{equation}
Furthermore, we will have 
 $\partial\Omega = \Gdir \cup \Gneu \cup \partial\Gamma$, with $\Gdir$ and $\Gneu$ disjoint  open
 sets and $\partial\Gamma$ their common boundary,   and we will denote by $\nu$ the external unit normal to $\partial\Omega$.  We will  assume that 
 \begin{equation}
 \label{assOmega-pp}
\mathscr{H}^{d-1}( \Gdir)>0   \ \text{ and } \  \partial\Omega, \, \partial\Gamma \text{ are of class } \mathrm{C}^2
 \end{equation}
 (with  $\mathscr{H}^{d-1}$ the $(d{-}1)$-dimensional Hausdorff measure). 
On the Dirichlet part of the boundary $\Gdir$ we will prescribe a Dirichlet condition through an assigned function 
\begin{equation}
\label{assw-pp}
w_\Dir\in \mathrm{C}^1([0,T];H^1(\Omega;\R^d)),
\end{equation}
with trace on $\Gdir$ still denoted by $w_\Dir$. On the Neumann part $\Gneu$ we will apply a non-zero traction $g$. A standard 
condition in perfect plasticity is that the body and surface forces 
\begin{equation}
\label{assf&g-pp}
\begin{gathered}
f\in \rmC^1([0,T];L^d(\Omega;\R^d)), \ g \in \rmC^1 ([0,T];L^\infty (\Gneu;\R^d)) \text{ satisfy the \emph{safe-load} condition:}
\\
\exists\, \varrho \in \AC ([0,T];L^2(\Omega;\bbM_\sym^{d\times d})) \text{ with } \varrho_\dev \in \AC ([0,T]; L^\infty(\Omega;\bbM_\sym^{\dev}) \text{ s.t. } \begin{cases}
-\mathrm{div}(\varrho) = f \text{ in } \Omega
\\
\varrho \nu  = g \text{ in } \Gneu
\end{cases}  \text{ on }  (0,T)
\\
\text{ and  fulfilling  } \quad \exists\, \mathfrak{r}>0  \ \forall\, t\in [0,T] \ \foraa\, x \in \Omega \,: \ \varrho_\dev(t,x)+ 
 \overline{B}_\mathfrak{r}   \subset K\,.
\end{gathered}
\end{equation}
  \end{subequations}
With $f$ and $g$ we associate the total load function 
\begin{equation}
\label{total-load}
\ell: [0,T]\to \BD(\Omega;\R^d)^*, \qquad  \pairing{}{\BD(\Omega;\R^d)}{\ell(t)}{v}: = \int_{\Omega} f(t) v \dd x  + \int_{\Gneu}g(t)v \dd \mathscr{H}^{d{-}1}(x).
\end{equation}
Indeed, the above integrals are well defined for any $v\in  \BD(\Omega;\R^d)$ due to the embedding and trace properties of 
$ \BD(\Omega;\R^d)$. Clearly, $\ell(t)$ is also an element of $H^1(\Omega;\R^d)^*$ for every $t\in [0,T]$; in what follows,
to avoid overburdening notation, we will often omit to specify the spaces when writing   the duality pairing $\pairing{}{}{\ell(t)}{v}$.
\par
With the boundary datum $w_\Dir$ we associate the set $\mathcal{A}(w_\Dir) $ of the kinematically admissible states $(\tilde{u},p)$,  viz.
\begin{equation}
\label{def-admissible-states}
\begin{aligned}
&
\begin{aligned}
(\tilde{u},p) \in \mathcal{A}(w_\Dir) \text{ if and only if } 
 \ & \text{(i)} && \tilde{u} \in \BD(\Omega;\R^d), \ p \in \M(\Omega{\cup}\Gdir;\bbM_\dev^{d\times d}),
 \\
 & \text{(ii)} && e = \eps(\tilde{u}) - p \in L^2(\Omega;\bbM_\sym^{d\times d}),
 \\
 & \text{(iii)} && p= (w_\Dir{-}\tilde{u})\odot \nu \mathscr{H}^{d-1} \text{ on } \Gdir.
\end{aligned}
\\
& 
\text{We set } \mathcal{A}: =  \mathcal{A}(0).
\end{aligned}
\end{equation}
 Indeed,  an admissible $\tilde{u}$ may have jumps   (i.e., the measure $\eps(\tilde{u})$  can concentrate on)
$\partial\Omega$. Hence, the boundary condition $\tilde{u}=w_\Dir$  on $\Gdir$ has to be relaxed in terms of \eqref{def-admissible-states}(iii) (to be understood as an equality between measures on $\Gdir$),
which expresses the fact that any jump of $\tilde{u}$ violating  the Dirichlet condition $\tilde{u}=w_\Dir$ is due to a localized plastic deformation. 
From now on, we will use the splitting
\begin{equation}
\label{split4u}
\tilde u = u+w_\Dir
\end{equation}
and work with the state variables $(u,p)$. 
\par
The Energetic formulation (cf. \cite{DMDSMo06QEPL})  of the perfectly plastic system \eqref{perf-plast-syst}  is given in this 
 setup:
 \paragraph{\bf Ambient space:}
\begin{subequations}
\label{ppsyst-setup}
\begin{equation}
\label{ppsyst-X}
\Xs = \spu \times \spz \quad \text{with } \spu =  \BD(\Omega;\R^d), \  \spz=  \M(\Omega{\cup}\Gdir;\bbM_\dev^{d\times d})
\end{equation}
and
 (1)
 $\sigma_\spz$ is  the weak$^*$-topology on $\M(\Omega{\cup}\Gdir;\bbM_\dev^{d\times d})
$, identified with the dual of the space of ($\bbM_\dev^{d\times d}$-valued) continuous functions with compact support  on $\Omega{\cup}\Gdir$; (2)  
  $\sigma_\spu$  is the weak$^*$ topology on $ \BD(\Omega;\R^d)$ (which  has in fact a predual, cf.\ e.g.\ \cite{Temam-Strang80}), inducing the following notion of weak$^*$-convergence: $u_k \weaksto u$ in $ \BD(\Omega;\R^d)$ if and only if 
$u_k\weakto u$ in  $L^1(\Omega;\R^d)$ and $\eps(u_k)\weaksto \eps( u) $ in $ \M(\Omega;\bbM_\sym^{d\times d})$.
\paragraph{\bf Energy functional:}
\begin{equation}
\label{ppsyst-E}
\enet t{  u}{p} : = \frac12\int_{\Omega} \bbC(\eps( u{+}w_\Dir(t)){-}p) :  (\eps( u{+}w_\Dir(t)){-}p)   \dd x + I_{\mathcal{A}}( u,p) 
- \pairing{}{\BD(\Omega;\R^d)}{\ell(t)}{ u{+}w_\Dir(t)}\,.
\end{equation}
Here, 
 the indicator function $I_{\mathcal{A}}$
forces the constraint $( u,p) \in \mathcal{A} $, so that $\tilde{u} = u+w_\Dir\in  \mathcal{A}(w_\Dir)$;
\paragraph{\bf Dissipation distance:} it
 is defined in terms of the support function
\[
 \mathrm{R}: \bbM_{\dev}^{d\times d} \to [0,\infty), \qquad 
 \mathrm{R}(\pi) : = \sup_{\omega \in K} \omega : \pi
\]
of the set $K$ from 
\eqref{assK-pp}, via 
\begin{equation}
\label{ppsyst-D}
\mds{\spz}{p}{\tilde p}:  = \mathcal{R}(\tilde{p} - p) \quad \text{with } \mathcal{R}(\mathcal{\pi}):= 
\int_{\Omega{\cup}\Gdir} \mathrm{R}\left(\frac{\pi}{|\pi|} \right) |\pi|(\dd x) \text{ for all } \pi \in \M(\Omega{\cup}\Gdir;\bbM_{\dev}^{d\times d})\,,
\end{equation}
where $|\pi|$ is the variation of $\pi$ and $\frac{\pi}{|\pi|} $ its Radon-Nykod\'im derivative w.r.t.\  $|\pi|$. 
\end{subequations}
\par
It is straightforward to check that in  the above metric-topological setting  $<T>$ is fulfilled.
For the reader's convenience, we recapitulate here the arguments from  \cite{DMDSMo06QEPL} to show that
\begin{lemma}
\label{l:en-pp}
Under conditions \eqref{ass:perf-plast}, the energy functional $\calE$ from \eqref{ppsyst-E} fulfills $<A.1>, <A.2>$,  $<A.3>$.
\end{lemma}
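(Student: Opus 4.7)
The plan is to mimic the arguments developed in \cite{DMDSMo06QEPL} for the Energetic analysis of perfect plasticity, checking each abstract hypothesis in turn. Throughout, the decisive structural ingredient is the safe-load condition in \eqref{assf&g-pp}: the pointwise inclusion $\varrho_\dev(t,x) + \overline{B}_\mathfrak{r} \subset K$ yields, by definition of the support function $\mathrm{R}$, the bound
\begin{equation*}
\varrho_\dev(t,x) : \pi \leq \mathrm{R}(\pi) - \mathfrak{r}|\pi| \qquad \text{for all } \pi \in \bbM_\dev^{d\times d},
\end{equation*}
which translates to $\int \varrho_\dev : p \leq \mathcal{R}(p) - \mathfrak{r}|p|(\Omega{\cup}\Gdir)$. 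This inequality, combined with integration by parts using $-\mathrm{div}(\varrho(t)) = f(t)$ and $\varrho(t)\nu = g(t)$ on $\Gneu$, lets us rewrite the work of external forces $\langle \ell(t), u + w_\Dir(t)\rangle$ as $\int_\Omega \varrho(t):(\eps(u+w_\Dir(t))-p)\dd x + \int_\Omega \varrho(t):p\dd x$ minus boundary contributions involving $w_\Dir$, so that it can be absorbed into the elastic and dissipative terms.

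For $<A.1>$, the domain $\domene{}$ is the set of admissible pairs $(u,p) \in \mathcal{A}$ with $\eps(u+w_\Dir(t))-p \in L^2$, and the kinematic constraint \eqref{def-admissible-states}(iii) involves only the trace of $u$ and $w_\Dir$, which is continuous under $\sigma_\spu$; since $w_\Dir \in \rmC^1([0,T];H^1)$, the domain is indeed $t$-independent. Lower semicontinuity follows from convexity and continuity of the quadratic elastic term in $e$ (with weak$^*$ convergence of $p$ in $\M$ and $\eps(u) = e+p \weaksto e+p$ in $\M$, the $L^2$-weak lower semicontinuity applies to $e$), while the external work term is $\sigma$-continuous since $\ell(t) \in \BD^*$ acts continuously under $\sigma_\spu = $ weak$^*$-BD convergence. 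The perturbation $\pertn = \calE + \mathcal{R}(\cdot - p_o) + F_0$ is nonnegative for $F_0$ large enough by the coercivity argument below.

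For $<A.2>$, on any sublevel $\{\calF \leq C\}$ one controls $\mathcal{R}(p-p_o) \leq C$, hence $|p|(\Omega\cup\Gdir) \leq C/r_K + C'$ via \eqref{assK-pp}. Using the safe-load decomposition mentioned above to absorb the linear work term against $\mathcal{R}(p)$ and Young's inequality on the elastic term, one obtains a bound for $\|e\|_{L^2}$. Then $\eps(u) = e+p - \eps(w_\Dir(t))$ is bounded in $\M(\Omega;\bbM_\sym^{d\times d})$, and by the $\BD$ Korn-Poincar\'e inequality (using the Dirichlet condition relaxed via \eqref{def-admissible-states}(iii)) one bounds $\|u\|_{\BD}$. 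Sequential compactness of bounded sets in $\BD$ for the weak$^*$ topology, of bounded sets in $\M$ for weak$^*$ convergence, and of $[0,T]$ for the Euclidean topology, yields $\sigma_\R$-compactness; closedness of the admissibility constraint $\mathcal{A}$ under these convergences, together with lower semicontinuity from $<A.1>$, preserves the sublevel in the limit.

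For $<A.3>$, by direct differentiation
\begin{equation*}
\pwt t{u}{p} = \int_\Omega \bbC(\eps(u+w_\Dir(t))-p) : \eps(\dot w_\Dir(t))\dd x - \langle \dot\ell(t), u+w_\Dir(t)\rangle - \langle \ell(t), \dot w_\Dir(t)\rangle,
\end{equation*}
which is well-defined and differentiable in $t$ under $w_\Dir \in \rmC^1([0,T];H^1)$, $f\in \rmC^1([0,T];L^d)$, $g \in \rmC^1([0,T];L^\infty)$. Estimating in terms of $\|e\|_{L^2}$ and $\|u+w_\Dir(t)\|_{\BD}$ (via the embedding $\BD \hookrightarrow L^{d/(d-1)}$ and the safe-load trick to rewrite the $\ell$-terms), one obtains the bound $|\pwt tup| \leq \Lambda_P(\pert tup + C_P)$ required in \eqref{power-control}. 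For the upper semicontinuity of $\partial_t \calE$ on sublevels, one verifies the uniform continuity condition \eqref{unif-cont}: the $t$-dependence is through the $\rmC^1$ functions $w_\Dir, \ell$, giving an explicit modulus $\omega_C(h) = C \sup_{|t_1-t_2|\leq h}(\|\dot w_\Dir(t_1)-\dot w_\Dir(t_2)\|_{H^1}+\|\dot\ell(t_1)-\dot\ell(t_2)\|_{\BD^*})$. Proposition \ref{prop:GF} then gives the required continuity of $\partial_t\calE$ along sequences with converging energies, hence $<A.3>$. The main obstacle throughout is the careful deployment of the safe-load condition to extract $\BD$-coercivity and the power bound, since without it neither the sublevel compactness nor the $\Lambda_P$-estimate would close.
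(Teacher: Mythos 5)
Your overall strategy mirrors the paper's: invoke the safe-load condition to establish coercivity of $\calF$, deduce $\BD$-bounds via a Korn--Poincar\'e inequality, get $<A.2>$ from the compactness properties of $\BD$ and $\M$, and differentiate $\calE$ in $t$ for $<A.3>$. However, there is a genuine gap in your argument for $<A.1>$, precisely at the point the safe-load machinery is most needed.

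You claim that ``the external work term is $\sigma$-continuous since $\ell(t)\in\BD^*$ acts continuously under $\sigma_\spu=$ weak$^*$-$\BD$ convergence.'' This is a confusion between the weak and weak$^*$ topologies: a dual element $\ell\in\BD^*$ is continuous for the \emph{weak} topology on $\BD$, whereas weak$^*$ convergence only guarantees continuity of the pairing against elements of the \emph{predual}. The loads $\ell$ built from $f\in L^d$ and $g\in L^\infty(\Gneu)$ are generically \emph{not} in the predual of $\BD$. Concretely, although boundedness on sublevels and $\BD\hookrightarrow L^{d/(d-1)}$ do give $u_k\weakto u$ in $L^{d/(d-1)}$, so the body-force part $\int_\Omega f u_k\,\dd x$ passes to the limit, the traction part $\int_{\Gneu}g\,u_k\,\dd\mathscr{H}^{d-1}$ does \emph{not}: traces of $\BD$-functions are unstable under weak$^*$ convergence (mass can concentrate on $\partial\Omega$), so $\langle\ell(t),\cdot\rangle$ is not $\sigma_\spu$-continuous. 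This is exactly what the paper (following \cite{DMDSMo06QEPL}) circumvents: using \cite[Lemma 3.1]{DMDSMo06QEPL} to rewrite $\langle\ell(t),u+w_\Dir\rangle$ as $\langle\varrho,e\rangle+\langle\varrho_\dev,p\rangle+\text{const.}$, where $\langle\varrho_\dev,p\rangle$ is the measure duality defined in \cite[Sec.\ 2]{DMDSMo06QEPL}, and then proving lower semicontinuity of the \emph{combined} functional $\frac12\int\bbC e:e - \langle\varrho,e\rangle + \calR(p-p_o)-\langle\varrho_\dev,p-p_o\rangle$, which involves a Reshetnyak-type argument for the $(\calR-\varrho_\dev)$-block. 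You invoke the safe-load rewriting for coercivity but silently drop it when you get to lower semicontinuity, which is where it is actually indispensable.

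A smaller point: for $<A.3>$ you verify the uniform-continuity condition \eqref{unif-cont} and then appeal to Proposition \ref{prop:GF}. This establishes the \emph{conditional} upper semicontinuity \eqref{usc-power}, i.e.\ $<A.3'>$, not the sequential upper semicontinuity of $\partial_t\calE$ on sublevels required by $<A.3>$ as stated. The paper claims sequential continuity of $\partial_t\calE$ on sublevels directly (which requires a version of the same safe-load rewriting of $\langle\dot\ell,u+w_\Dir\rangle$ to pass the limit). Since the lemma being proved explicitly names $<A.3>$, you should either prove that stronger statement or explain that $<A.3'>$ suffices as a replacement.
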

\begin{proof}
In view of  the safe-load condition \eqref{assf&g-pp} and   \cite[Lemma 3.1]{DMDSMo06QEPL}, 
the loading term rewrites as 
\[
 \pairing{}{\BD(\Omega;\R^d)}{\ell(t)}{ u{+}w_\Dir(t)} = \pairing{}{}{\varrho}{ \eps( u{+}w_\Dir (t)) {-}p} + \pairing{}{}{\varrho_\dev}{p}  + \pairing{}{}{\ell(t)}{w_\Dir (t)}  -  \pairing{}{}{\varrho}{ \eps(w_\Dir (t))},
 \]
 where the duality pairing $ \pairing{}{}{\varrho_\dev}{p} $ involving the 
 \emph{measure} $p$  has been  carefully defined in   \cite[Sec.\ 2]{DMDSMo06QEPL}, and the other duality pairings are not specified for notational simplicity. 
 Let us now fix  any     reference point  $p_o \in \spz$ satisfying the kinematical admissibility condition \eqref{def-admissible-states} (i.e.,
 such that there exists $u_o\in \spu$ such that $(u_o,p_o) \in \mathcal{A}(w_\Dir (0))$).
 Therefore,    suitably choosing $F_0$ (cf.\ \eqref{F0pp} below), we  find  for all $( u,p) \in \mathcal{A}$
that 
 \begin{equation}
 \label{ad-coerc-pp}
 \begin{aligned}
 \calF(t,{u},p) & = \enet t{{u}}p + \mds{\spz}{p_o}{p}+F_0
 \\ & 
 =  \frac12\int_{\Omega} \bbC(\eps( u{+}w_\Dir (t)){-}p) :  (\eps( u{+}w_\Dir (t)){-}p)   \dd x   -  \pairing{}{}{\varrho(t)}{ \eps( u{+}w_\Dir ) {-}p} + \mathcal{R}(p {-} p_o) -  \pairing{}{}{\varrho_\dev(t)}{p{-}p_o} 
 \\
 & \quad 
 + \pairing{}{}{\varrho_\dev(t)}{p_o}   -   \pairing{}{}{\ell(t)}{w_\Dir (t)}  +  \pairing{}{}{\varrho(t)}{ \eps(w_\Dir(t))} +F_0
 \\
 & 
 \stackrel{(1)}{\geq} 
 \frac{\gamma_{\bbC}}4 \|\eps( u{+}w_\Dir (t)){-}p\|_{L^2(\Omega)}^2 - \frac1{\gamma_{\bbC}} \|\varrho(t) \|_{L^2(\Omega)}^2  + 
 \mathfrak{r} \| p-p_o\|_{\mathrm{M}(\Omega{\cup}\Gdir)}
 \end{aligned}
 \end{equation}
(observe that the duality pairing $ \pairing{}{}{\varrho_\dev(t)}{p_o} $ is well defined since $p_o$ is a kinematically admissible strain, cf.\
  \cite[Sec.\ 2]{DMDSMo06QEPL}). 
Here, {\footnotesize (1)} follows from \emph{(i)} the estimate $ \tfrac12 \bbC e :  e - \varrho: e \geq\tfrac12 \gamma_{\bbC} |e|^2 -  \tfrac14 \gamma_{\bbC} |e|^2 - \tfrac1{\gamma_\bbC} |\varrho|^2$ by the positive-definiteness of  $\bbC$ and Young's inequality; \emph{(ii)}     \cite[Lemma 3.2]{DMDSMo06QEPL}, which ensures
the estimate $ \mathcal{R}(p {-} p_o) -  \pairing{}{}{\varrho_\dev(t)}{p{-}p_o} \geq   \mathfrak{r} \| p-p_o\|_{\mathrm{M}(\Omega{\cup}\Gdir)}$, 
  with $\mathfrak{r}>0$ from the safe-load condition   \eqref{assf&g-pp};  \emph{(iii)} choosing 
 \begin{equation}
 \label{F0pp}
 F_0 \geq \sup_{t\in [0,T]} \left(  | \pairing{}{}{\varrho_\dev(t)}{p_o} | {+}  |\pairing{}{}{\ell(t)}{w_\Dir(t)}|  {+}  \pairing{}{}{\varrho(t)}{ \eps(w_\Dir(t))} \right)\,
 \end{equation}
 thanks to \eqref{assw-pp} and  \eqref{assf&g-pp}. 
 From  \eqref{ad-coerc-pp} and again  \eqref{assw-pp}--\eqref{assf&g-pp} we thus deduce that 
 \begin{equation}
 \label{coerc-pp} 
 \begin{aligned}
 &
 \exists\, c,\, C>0 \ \forall\,  (t,{u},p) \in [0,T]\times \spu \times \spz \text{ with }  ( u,p) \in \mathcal{A} \, :
 \\
 &  \quad 
 \calF(t,{u},p) \geq c\left(     \|\eps( {u} )  \|_{\M(\Omega)} + \|p\|_{\mathrm{M}(\Omega{\cup}\Gdir)} + \|\eps( {u} ) {-} p  \|_{L^2(\Omega)}   \right) -C\,.
 \end{aligned}
 \end{equation}
 From the bound for $p$ and the information that ${u}\odot \mathscr{H}^{d-1} =-p$ we conclude a bound for ${u} $ in $L^1(\Gdir;\R^d)$. Therefore, a Poincar\'e-type estimate for $\BD$-functions
  (cf.\ e.g., \cite[Prop.\ 2.4, Rmk.\ 2.5]{Temam83}) yields a bound for $u$ in $\BD(\Omega;\R^d)$. We thus conclude that the sublevels of $\calF$ are bounded in $[0,T]\times \spu\times \spz $,
   and thus sequentially relatively compact w.r.t.\ the $\sigma_\R$ topology,  whence $<A.2>$. 
 \par
Relying on estimate \eqref{coerc-pp}  and on the closedness properties of the set $\mathcal{A}(w_\Dir)$ (cf.\ \cite[Lemma 2.1]{DMDSMo06QEPL}), it is standard to show that $\calE$ is sequentially l.s.c.\
 on  sublevels of $\calF$ w.r.t.\  the $(\sigma_U {\times} \sigma_Z)$-topology, i.e.\ $<A.1>$.
\par
It follows from  \eqref{assw-pp} and  \eqref{assf&g-pp} that 
$\partial_t \enet t{ u}{p}$ exists and 
\[
\begin{aligned}
\partial_t \enet t{ u}{p} = \int_{\Omega} \bbC \left( \eps( u) {-} p {+} \eps(w_\Dir(t))\right): \eps(\dot{w}_\Dir(t)) \dd x  &  - \pairing{}{H^1(\Omega;\R^d)}{\dot{\ell}(t)}{w_\Dir(t)}  - \pairing{}{H^1(\Omega;\R^d)}{{\ell}(t)}{\dot{w}_\Dir(t)} \\ &    \foraa\, t \in (0,T) \text{  and for all } ( u,p) \in\domene{}.
\end{aligned}
\]
Therefore,   in view of \eqref{coerc-pp} we find that 
\[
\begin{aligned}
|\partial_t \enet  t{ u}{p}  |  &  \leq C  \| \eps( u) {-} p\|_{L^2(\Omega)} \| \eps(\dot{w}_\Dir(t))\|_{L^2(\Omega)}  + C   \| \eps(w_\Dir(t))\|_{L^2(\Omega)}   
 \| \eps(\dot{w}_\Dir(t))\|_{L^2(\Omega)}  
 \\ & \quad 
+ \| \dot{\ell}(t)\|_{H^1(\Omega)^*} \| w_\Dir(t)\|_{H^1(\Omega)}  + \| \ell(t)\|_{H^1(\Omega)^*} \| \dot{w}_\Dir(t)\|_{H^1(\Omega)} 
\\ & 
\leq   \Lambda_P  \left( \calF(t,{u},p) +C_P \right)
\end{aligned}
\]
with $\Lambda_P(t) = C\sup_{t\in[0,T]}\left(  \| \dot{w}_\Dir(t)\|_{H^1(\Omega)} {+} \| \dot{\ell}(t)\|_{H^1(\Omega)^*}  \right)  $
and $C_P =  \| \ell\|_{L^\infty(0,T;H^1(\Omega)^*) }^2  + \| w_\Dir \|_{L^\infty (0,T;H^1(\Omega))}^2 $. It is straightforward to check that, again thanks to  \eqref{assw-pp}--\eqref{assf&g-pp}  and  \eqref{coerc-pp},  $\partial_t  \enet t{ u}p$ is 
indeed (sequentially)  continuous w.r.t.\ the $\sigma_\R$-topology
 on the sublevels of $\calF$. This concludes the proof of  $<A.3>$. 
\end{proof} 
\paragraph{\bf The viscous correction:}
Let us now consider the  family  of viscous corrections
\begin{equation}
\label{visc-corr-pp}
\delta_{\spz}(p,\tilde p): = h(\mds{\spz}{p}{\tilde p}) = h(\mathcal{R}(\tilde{p}{-}p)) \quad \text{for all } p,\, \tilde p \in \spz \text{ and } h \text{ as in  \eqref{a-funct-of-d}} 
\end{equation}
(cf.\ Remark \ref{rmk:gener-visc-corr-pp} for a discussion on more general viscous corrections).
With our next result we show that, in the frame of the rate-independent system $\RIS$ 
given by \eqref{ppsyst-setup} and 
with this choice of $\delta_\spz$,  the Visco-Energetic stability condition   \eqref{stab-VE} in indeed \emph{equivalent} to the Energetic stability \eqref{globStab}.
\begin{proposition}
\label{prop:VE2E-stab}
Assume \eqref{ass:perf-plast} and let $( u, p) \in  \mathrm{B}([0,T];\BD(\Omega;\R^d)) \times  \BV([0,T];\M(\Omega{\cup}\Gdir;\bbM_{\dev}^{d\times d})) $
 fulfill $( u(t),p(t)) \in \mathcal{A}$ for all $t\in [0,T]$. 
Then, the following conditions are \emph{equivalent}  at a given $t\in [0,T]$:
\begin{enumerate}
\item 
$( u,p)$ fulfill the   stability condition   \eqref{stab-VE} for the rate-independent system $\RIS$ 
 \eqref{ppsyst-setup}, with the viscous correction   $\delta_\spz$ from 
\eqref{visc-corr-pp};
 \item   there holds 
 \begin{subequations}
 \label{mom-bal-pp}
 \begin{equation}
 \begin{gathered}
 \sigma(t) = \bbC(\eps({u}(t)+w_\Dir(t)) -p(t)) \in \Sigma(\Omega) \cap 
  \mathcal{K}(\Omega) 
  \text{ with }\\
   \begin{cases}
  \Sigma(\Omega): =  \{ \sigma \in L^2(\Omega;\bbM_{\sym}^{d\times d})\, : \ \mathrm{div}(\sigma) \in L^d(\Omega;\R^d),\ \sigma_\dev \in L^\infty(\Omega;\bbM_\dev^{d\times d}) \},
  \\
   \mathcal{K}(\Omega):= \{ \sigma \in L^2(\Omega;\bbM_{\sym}^{d\times d})\, : \ \sigma_\dev(x) \in K \ \foraa\, x \in \Omega\}\,,
  \end{cases}
  \end{gathered}
 \end{equation}
 and 
  \begin{equation}
-\mathrm{div}(\sigma(t)) = f(t) \text{ a.e.\ in } \Omega, \qquad 
\sigma(t) \nu  = g(t) \text{ on } \Gneu\,;
  \end{equation}
  \end{subequations}
\item  $( u,p)$ fulfill the   stability condition    \eqref{globStab} for the rate-independent system $\RIS$  from 
 \eqref{ppsyst-setup}.
\end{enumerate}
\end{proposition}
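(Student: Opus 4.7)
My plan is to establish the chain $(3) \Rightarrow (1) \Rightarrow (3)$ and to obtain $(2) \Leftrightarrow (3)$ by quoting the standard characterization of the Energetic stable set in perfect plasticity. The implication $(3) \Rightarrow (1)$ is immediate, since $\delta_\spz \geq 0$ by \eqref{visc-corr-pp}, so that the right-hand side of \eqref{globStab} is dominated by that of \eqref{stab-VE}. The equivalence $(2) \Leftrightarrow (3)$ is precisely the content of the analysis carried out in \cite[Sec.~3]{DMDSMo06QEPL} under our standing hypotheses \eqref{ass:perf-plast}: varying only $u'$ while fixing $p'=p(t)$ in \eqref{globStab} produces the weak equilibrium $-\dive\sigma(t)=f(t)$ together with $\sigma(t)\nu=g(t)$ on $\Gneu$, and varying $p'$ by localized admissible perturbations yields the pointwise constraint $\sigma_\dev(t,\cdot)\in K$ a.e.\ in $\Omega$; the converse obtained by integration by parts exploits the safe-load condition.

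The heart of the argument is the new implication $(1) \Rightarrow (3)$, which I would prove by a linearization trick based on the $1$-homogeneity of $\mathcal{R}$, the superlinear-at-zero decay of $h$, and the joint convexity of $\calE(t,\cdot,\cdot)$. Fix an arbitrary competitor $(u',p')\in X$; if $(u',p')\notin \mathcal{A}$ then $\enet t{u'}{p'}=+\infty$ and \eqref{globStab} holds trivially, so assume $(u',p')\in\mathcal{A}$, in which case $\mds{\spz}{p(t)}{p'}<\infty$ since $\mathcal{R}$ has linear growth on Radon measures by \eqref{assK-pp}. For $\lambda\in(0,1)$ set
\[
(u_\lambda,p_\lambda) := (1{-}\lambda)(u(t),p(t)) + \lambda (u',p').
\]
The admissibility set $\mathcal{A}$ in \eqref{def-admissible-states} is cut out by linear constraints (in particular $p = -u\odot\nu\,\mathscr{H}^{d-1}$ on $\Gdir$ after the splitting \eqref{split4u}), hence is convex, so $(u_\lambda,p_\lambda)\in\mathcal{A}$; combined with the positive-definiteness of $\bbC$ and the affine loading term, this gives
\[
\enet t{u_\lambda}{p_\lambda} \leq (1{-}\lambda)\,\enet t{u(t)}{p(t)} + \lambda\,\enet t{u'}{p'},
\]
while the $1$-homogeneity of $\mathcal{R}$ yields $\mds{\spz}{p(t)}{p_\lambda}=\lambda\,\mds{\spz}{p(t)}{p'}$ and therefore $\delta_\spz(p(t),p_\lambda)=h(\lambda\,\mds{\spz}{p(t)}{p'})$.

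Inserting $(u_\lambda,p_\lambda)$ into the VE stability condition \eqref{stab-VE}, cancelling $(1{-}\lambda)\enet t{u(t)}{p(t)}$ from both sides and dividing through by $\lambda>0$, I obtain
\[
\enet t{u(t)}{p(t)} \leq \enet t{u'}{p'} + \mds{\spz}{p(t)}{p'} + \frac{h\bigl(\lambda\,\mds{\spz}{p(t)}{p'}\bigr)}{\lambda}.
\]
Letting $\lambda\downarrow 0$ and using the assumption $h(r)/r\to 0$ as $r\downarrow 0$ from \eqref{a-funct-of-d} makes the last term vanish, yielding the global stability \eqref{globStab}. I do not anticipate a significant obstacle: the only point requiring verification is the joint convexity of $\calE(t,\cdot,\cdot)$, which reduces to the convexity of $\mathcal{A}$ and the positive-definiteness of the quadratic elastic contribution, both immediate from \eqref{ppsyst-E} and \eqref{def-admissible-states}.
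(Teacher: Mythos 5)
Your proof is correct, and it takes a genuinely different route to close the cycle than the paper does. The paper proves $(1)\Rightarrow(2)$ by expanding the quadratic form of $\calE$ along the affine test states $(u(t)+\eta v,\,p(t)+\eta q)$ with $(v,q)\in\mathcal{A}$ and $\eta\downarrow 0$ (accounting for both signs), which after the cancellation of the $h$-term via \eqref{a-funct-of-d} yields the variational inequalities \eqref{2sep-ineqs}; it then invokes \cite[Prop.~3.5 and Thm.~3.6]{DMDSMo06QEPL} to get $(2)$ and $(2)\Leftrightarrow(3)$, and closes with the trivial $(3)\Rightarrow(1)$. You instead prove $(1)\Rightarrow(3)$ directly, using the joint convexity of $\calE(t,\cdot,\cdot)$ (which you correctly reduce to the positive-definiteness of $\bbC$ and the affine nature of $\mathcal{A}$) together with the $1$-homogeneity of $\calR$: inserting the convex combination $(u_\lambda,p_\lambda)$ into \eqref{stab-VE}, subtracting $(1-\lambda)\enet t{u(t)}{p(t)}$ (finite since $(u(t),p(t))\in\mathcal{A}$), dividing by $\lambda$, and sending $\lambda\downarrow 0$ with $h(r)/r\to 0$. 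The two linearizations are in fact the same one-parameter family (take $\eta=\lambda$ and $(v,q)=(u'-u(t),p'-p(t))$); what differs is that you invoke convexity of $\calE$ to bound the energy from above, which hands you \eqref{globStab} immediately, whereas the paper explicitly expands the quadratic term and therefore lands on the weaker-looking inequalities \eqref{2sep-ineqs}, forcing the detour through the PDE characterization $(2)$. Your route is more direct and makes the dependence on the convexity structure of perfect plasticity transparent; the paper's route has the side benefit of producing the statement $(2)$ of independent mechanical interest as an explicit step of the argument. Both rely equally on \cite{DMDSMo06QEPL} for the equivalence $(2)\Leftrightarrow(3)$. The only minor point worth making explicit in your write-up is that $h(0)=0$ (which follows from $h\geq 0$ continuous nondecreasing with $h(r)/r\to 0$), so the limit argument is valid also when $\mds{\spz}{p(t)}{p'}=0$.
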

\begin{proof}
First of all, we show that \textbf{(1) $\Rightarrow$ (2)}. Indeed, in the  stability condition   \eqref{stab-VE}, i.e. 
\[
\begin{aligned}
&
 \frac12\int_{\Omega} \bbC(\eps( u{+}w_\Dir(t)){-}p) :  (\eps( u{+}w_\Dir(t)){-}p)   \dd x 
- \pairing{}{\BD(\Omega;\R^d)}{\ell(t)}{ u(t)}\\ &  \leq 
 \frac12\int_{\Omega} \bbC(\eps( u'{+}w_\Dir(t)){-}p') :  (\eps( u'{+}w_\Dir(t)){-}p')   \dd x 
- \pairing{}{\BD(\Omega;\R^d)}{\ell(t)}{ u'}  + \calR(p'{-}p(t)) + h( \calR(p'{-}p(t)) ) 
\end{aligned}
\]
for all $ ({u}',p') \in \mathcal{A}$, we choose
$({u}',p') :  = ({u}(t) + \eta {v}, p(t)+\eta q)$, with arbitrary  $\eta\in \R$ and  $({v},q) \in \mathcal{A}$. With straightforward calculations we find
\[
\begin{aligned}
0\leq \frac12   \int_{\Omega} ( \eta \eps({v}){-}\eta q):( \eta \eps({v}){-}\eta q) \dd x
 & + \int_{\Omega}  \bbC(\eps( u{+}w_\Dir(t)){-}p) :( \eta \eps({v}){-}\eta q) \dd x -\pairing{}{\BD(\Omega;\R^d)}{\ell(t)}{\eta v} \\ & +   \calR(\eta q) + h( \calR(\eta q) )\,.
 \end{aligned}
\]
Hence, by the positive homogeneity of $\calR$ we conclude 
\[
\begin{aligned}
0\leq \eta^2 \frac12 \int_\Omega \bbC (\pm\eps({v}) \mp q) :  (\pm \eps({v}) \mp q)  \dd x 
 &  +\eta \int_{\Omega}  \bbC(\eps( u{+}w_\Dir(t)){-}p) :( \pm \eps({v}) \mp \eta q) \dd x -\eta \pairing{}{\BD(\Omega;\R^d)}{\ell(t)}{\pm v} \\ & +   \eta\calR(\pm q) + h( \calR(\eta (\pm q)) ) \quad \text{for all } \eta>0.
 \end{aligned}
\]
Dividing by $\eta$ and letting $\eta\down 0$,
and using that 
\begin{equation}
\label{used-here-pp}
\lim_{\eta\down 0} \frac {h( \calR(\eta (\pm q)) )}\eta = \lim_{\eta\down 0} \frac {h( \calR(\eta (\pm q)) )}{\calR(\eta (\pm q)) } \frac {\calR(\eta (\pm q))}{\eta} =0  
\end{equation}
thanks to property \eqref{a-funct-of-d},
 we find that 
\begin{equation}
\label{2sep-ineqs}
\begin{cases}
 - \calR(q) \leq \int_{\Omega} \sigma(t) : (\eps( v) {-} q ) \dd x  - \pairing{}{\BD(\Omega;\R^d)}{\ell(t)}{ v},
\\
  \int_{\Omega} \sigma(t) : (\eps( v) {-} q ) \dd x  - \pairing{}{\BD(\Omega;\R^d)}{\ell(t)}{ v} \leq \calR(q) 
\end{cases} 
\quad \text{for all } ( v,q) \in \mathcal{A}.
\end{equation}
It has been shown in \cite[Prop.\ 3.5]{DMDSMo06QEPL}   that \eqref{2sep-ineqs} is equivalent to \eqref{mom-bal-pp}.  This shows \textbf{(2)}. 
\par
In turn,  \textbf{(2) $\Leftrightarrow$ (3)} by \cite[Thm.\ 3.6]{DMDSMo06QEPL}. Finally,  we clearly have that
 \textbf{(3) $\Rightarrow$ (1)}.  This concludes the proof. 
\end{proof}
We are now in a position to prove
\begin{theorem}
\label{thm:pp}
Assume \eqref{ass:perf-plast}  and let  $( u, p) \in   \mathrm{B}([0,T];\BD(\Omega;\R^d)) \times  \BV([0,T];\M(\Omega{\cup}\Gdir;\bbM_{\dev}^{d\times d})) $ be a $\VE$ solution of the rate-independent system $\RIS$  from \eqref{ppsyst-setup}, with the viscous correction $\delta_\spz$ from  \eqref{visc-corr-pp}. Suppose that $( u, p)$  fulfills at $t=0$ the stability condition
\begin{equation}
\label{stab-at0}
\enet t{{u}(0)}{p(0)} \leq \enet t{{u}'}{p'} + \calR(p'{-}p(0)) \quad \text{for all } ({u}',p') \in \mathcal{A}.
\end{equation}
Then, $( u, p)$ is an Energetic solution of the rate-independent system $\RIS$   \eqref{ppsyst-setup}.
\end{theorem}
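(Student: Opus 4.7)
The strategy is to invoke Proposition \ref{prop:VE=E}, which reduces the assertion to verifying the \emph{global} Energetic stability condition \eqref{globStab} at every $t\in [0,T]$. By Proposition \ref{prop:VE2E-stab}, at each fixed $t$ the $\cmdsn{\spz}$-stability condition \eqref{stab-VE} is equivalent to \eqref{globStab}. Since the $\VE$ solution satisfies \eqref{stab-VE} for every $t\in [0,T]\setminus \jump p$, and \eqref{stab-at0} provides stability at $t=0$, the remaining task is to establish \eqref{globStab} at each jump time $t\in \jump p\cap (0,T]$ (note that $\jump p$ is at most countable). The plan for a fixed such $t$ is: first prove \eqref{globStab} for the left-limit state $(\lli ut,\lli pt)$ by a limiting argument based on the stress-based characterization of Proposition \ref{prop:VE2E-stab}(2), then transfer stability from $(\lli ut,\lli pt)$ to $(u(t),p(t))$ via the $\VE$ jump conditions and the sub-additivity of $\calR$.

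Fix $t\in \jump p\cap(0,T]$ and pick $t_n\uparrow t$ with $t_n\in [0,T]\setminus \jump p$; such a sequence exists because $\jump p$ is at most countable. The a priori bound \eqref{sup-perto} combined with the coercivity estimate \eqref{coerc-pp} yields uniform bounds of $(u(t_n))$ in $\BD(\Omega;\R^d)$, of $(p(t_n))$ in $\M(\Omega\cup\Gdir;\bbM_\dev^{d\times d})$, and of the elastic strains $e(t_n):=\eps(u(t_n)+w_\Dir(t_n))-p(t_n)$ in $L^2(\Omega;\bbM_\sym^{d\times d})$. Along a (not relabeled) subsequence we have $p(t_n)\weaksto \lli pt$ in $\spz$, $u(t_n)\weaksto u^*$ in $\BD(\Omega;\R^d)$, and hence $\sigma(t_n):=\bbC e(t_n) \weakto \sigma^*:= \bbC(\eps(u^*+w_\Dir(t))-\lli pt)$ in $L^2(\Omega;\bbM_\sym^{d\times d})$. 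Since each $(u(t_n),p(t_n))$ is globally stable (by Step 1), Proposition \ref{prop:VE2E-stab} gives $\sigma(t_n)\in \Sigma(\Omega)\cap \mathcal{K}(\Omega)$ together with $-\divg \sigma(t_n)=f(t_n)$ in $\Omega$ and $\sigma(t_n)\nu=g(t_n)$ on $\Gneu$. By the convex closedness of $\mathcal{K}(\Omega)$ in $L^2$, continuity of the loads \eqref{assf&g-pp}, and a standard passage to the limit in the divergence equation, $\sigma^*$ inherits condition (2) of Proposition \ref{prop:VE2E-stab} at time $t$. Testing the associated global stability of $(u^*,\lli pt)$ against competitors $(u'',\lli pt)\in \mathcal{A}$ shows $u^*\in \argmin_{\tilde u} \enet t{\tilde u}{\lli pt}$, so by the flexibility of \eqref{def-end-points-u} we may identify $\lli ut:=u^*$. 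Hence $(\lli ut,\lli pt)$ satisfies \eqref{globStab} at time $t$.

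To transfer stability to $(u(t),p(t))$, I would exploit the $\VE$ jump relation from Proposition \ref{prop:charact}. On the one hand,
\[
\enet t{\lli ut}{\lli pt}-\enet t{u(t)}{p(t)}=\vecost t{\lli pt}{p(t)}\geq \calR(p(t)-\lli pt),
\]
while on the other hand, inserting $(u',p')=(u(t),p(t))$ into the stability of $(\lli ut,\lli pt)$ yields the reverse inequality. Equality therefore holds and $\vecost t{\lli pt}{p(t)}=\calR(p(t)-\lli pt)$. For an arbitrary $(u',p')\in \mathcal{A}$, combining the stability of $(\lli ut,\lli pt)$ with the sub-additivity $\calR(p'-\lli pt)\leq \calR(p'-p(t))+\calR(p(t)-\lli pt)$ gives
\[
\enet t{u(t)}{p(t)}=\enet t{\lli ut}{\lli pt}-\calR(p(t)-\lli pt)\leq \enet t{u'}{p'}+\calR(p'-p(t)),
\]
which is precisely \eqref{globStab} at $(u(t),p(t))$. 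An application of Proposition \ref{prop:VE=E} then completes the proof.

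The most delicate point is the limit passage in the second paragraph: the $u$-component of a $\VE$ solution is only a measurable selection, and Lemma \ref{l:u-regulated} is not applicable here because the argmin of $\enet t\cdot p$ need not be a singleton in perfect plasticity. The saving feature is that, by Proposition \ref{prop:VE2E-stab}(2), global stability in the perfectly plastic setting admits a purely stress-based characterization, and the stress $\sigma(t_n)$ does pass to a weak $L^2$-limit under the convex constraint $\sigma_\dev\in K$ and under the linear equilibrium equations, independently of the particular selection of minimizers $u(t_n)$.
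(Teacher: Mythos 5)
Your proof is correct and follows essentially the same strategy as the paper's: reduce via Proposition~\ref{prop:VE=E} to checking global stability \eqref{globStab} at every $t$, obtain it at non-jump points from Proposition~\ref{prop:VE2E-stab}, extend it by a limit passage to the jump endpoints, and transfer it to $(u(t),p(t))$ via the energy dissipated at the jump together with the triangle inequality for $\calR$. Two small differences are worth noting. First, where the paper obtains the inequality $\enet t{u(t)}{p(t)}+\calR(p(t){-}\lli pt)\leq\enet t{\lli ut}{\lli pt}$ by invoking the upper energy-dissipation estimate \eqref{enue-EN} and appealing to the argument of \cite[Thm.~1]{RS17}, you deduce the same fact directly from the jump conditions \eqref{jump-conditions} of Proposition~\ref{prop:charact} and the elementary bound $\vecostname\geq\mdsn\spz$, which is cleaner and self-contained. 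Second, where the paper compresses the limit passage at jump endpoints into the single sentence ``Passing to the limit in \eqref{globStab}'', you spell it out through the stress-based characterization in Proposition~\ref{prop:VE2E-stab}(2); in doing so you correctly flag and resolve the subtlety that the $u$-component of a $\VE$ solution is only a measurable selection (so one cannot simply pass to the limit in the displacements), the point being precisely that the stress $\sigma(t_n)$, not $u(t_n)$, is the object that converges and carries the stability information.
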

\begin{proof}
 We  have that $( u, p)$ fulfills the stability condition   \eqref{globStab} at $t=0$ and,
in view of  Prop.\ \ref{prop:VE2E-stab}, whenever it fulfills    \eqref{stab-VE}, i.e.\ at every  $t\in [0,T]\setminus \mathrm{J}_z$. Passing to the limit
in   \eqref{globStab} we conclude that it holds also at the  end-points of every jump, i.e.\
\begin{equation}
\label{stab-at-jumps-0}
\enet t{\lri  ut}{\lri p t} \leq \enet t{{u}'}{p'} +\calR(p'{-}\lri pt) \quad \text{for all } ({u}',p') \in \spu\times \spz \text{ and all } t \in \mathrm{J}_z\,.
\end{equation}
With the very same argument as in the proof  of \cite[Thm.\ 1]{RS17}, using the upper energy-dissipation estimate \eqref{enue-EN} we deduce that
\[
\enet t{ u(t)}{p(t)} +\calR(p(t){-}\lli pt)  \leq \enet t{\lli ut}{\lli pt}\quad  \text{ for all } t \in \mathrm{J}_z\,,
\]
which, combined with \eqref{stab-at-jumps-0} and the triangle inequality for $\calR$, delivers the stability  \eqref{globStab} at  all $t\in \mathrm{J}_z$. In view of
Lemma \ref{l:en-pp}, we may  then apply Prop.\ \ref{prop:VE=E} and conclude that $( u, p) $ is an Energetic solution.
\end{proof}
\begin{remark}
\label{rmk:gener-visc-corr-pp}
\upshape
Indeed, Theorem \ref{thm:pp} carries over to $\VE$ solutions
of the perfectly plastic system
 arising from a  more general viscous correction $\delta_\spz: \spz \times \spz \to [0,\infty]$, provided that it fulfills  the compatibility condition
\begin{equation}
\label{compatib-pp}
\lim_{\tilde p \to p \text{ strongly in } \spz} \frac{\delta_\spz(p,\tilde p)}{\calR(\tilde p-p)} =0\,.
\end{equation}
Note that \eqref{compatib-pp} is a strengthened version of \eqref{will-be-checked}, in turn implying  $<B.3>$.
As a matter of fact,  \eqref{compatib-pp} guarantees the analogue of  \eqref{used-here-pp}, and then  the proof of  Proposition \ref{prop:VE2E-stab} still goes through. This is sufficient to extend the proof of Thm.\ \ref{thm:pp}.
\end{remark}

\section{Visco-Energetic solutions for a  damage system}
\label{s:3}
We consider a rate-independent damage  process in a nonlinearly elastic material, located in a bounded Lipschitz domain $\Omega \subset \R^d$. The body is subject to a time-dependent external force 
and it is clamped on a portion $\Gdir$ of its boundary $\partial\Omega$, fulfilling $\mathscr{H}^{d-1}(\Gdir)>0$.
Hence,  on $\Gdir$  the displacement field $\tilde u:(0,T) \times \Omega \to \R^d$ is prescribed by 
the time-dependent Dirichlet condition
\begin{equation}
\label{Dir}
\tilde u(t) = w_\Dir(t) \quad \text{on } \Gdir, \ t \in (0,T).
\end{equation}
From now on, 
as in Sec.\ \ref{s:perfect-plas} 
we will use the splitting $\tilde u = u +w_\Dir$
with $u=0 $ on $  \Gdir$ and, with slight abuse of notation,  
$w_\Dir$ the extension of the Dirichlet datum 
into the domain 
 $\Omega$. 
 The state variables of the damage  process will thus be
  $u $  and  a scalar damage variable $z: (0,T) \times \Omega \to \R$, 
  with values 
  in
  the interval $[0,1]$, such that
 $z(t, x) = 1 $ means no damage and $z(t, x) = 0$ means maximal damage in the neighborhood of the point $x \in \Omega$,   at the process  time
$t\in [0,T]$.
 \par
 We will confine the discussion to a \emph{gradient theory} for damage, thus accounting for an internal length scale. Namely, we  allow for the gradient regularizing contribution
$\int_\Omega |\nabla z|^r \dd x $
  to the driving energy, along the footsteps of \cite{MiRou06, ThoMie09DNEM,Thom11QEBV} analyzing Energetic solutions.
 More precisely, 
 the condition  $r>d$ imposed in \cite{MiRou06}
 was weakened to $r>1$ in \cite{ThoMie09DNEM} and, further, to $r=1$ (i.e.\ a $\mathrm{BV}$-gradient) in \cite{Thom11QEBV}. Here we will stay with the case $r>1$,  possibly 
 strengthening this condition to $r>d$ when considering a viscous correction that involves a norm different from that of the rate-independent dissipation potential,
 cf.\ Thm.\ \ref{thm:VEdam} ahead. 
 \par
 All in all, we consider the rate-independent  PDE system for damage
 \begin{equation}
 \label{RIS-damage}
 \begin{aligned}
 &
 -\mathrm{div}(\mathrm{D}_e W(x,\eps(u{+}w_\Dir),z)) = f && \text{ in } \Omega \times (0,T),
 \\
 & 
 \partial\mathrm{R}(x,\dot z) -\Delta_r z +\partial I_{[0,1]}(z) \ni - \mathrm{D}_z W(x,\eps(u{+}w_\Dir),z)  && \text{ in } \Omega \times (0,T),
 \end{aligned}
 \end{equation}
 supplemented with the homogeneous Dirichlet condition $u=0$  on $  \Gdir$, with the Neumann boundary conditions $\eps (u{+}w_\Dir) \nu =g$ on $\Gneu= \partial\Omega \setminus \Gdir $ (where
  $\nu$ is the exterior unit normal to $\partial\Omega$), 
  and $\partial_\nu z =0$ on $\partial\Omega$. 
 The conditions on the elastic energy density $W = W(x,e,z)$ (whose
G\^ateau derivatives  w.r.t.\ $e$ and $z$ are denoted by $\mathrm{D}_e $ and  $\mathrm{D}_z $, respectively),  and on the body and surface forces $f$, $g$ will be specified in \eqref{Wdam} and \eqref{data-dam} ahead;  
  $-\Delta_r  $ is the $r$-Laplacian operator and $\partial I_{[0,1]}: \R \rightrightarrows \R$ is the subdifferential of the indicator function $I_{[0,1]}$, enforcing the constraint $0\leq z \leq 1$ a.e.\ in $\Omega$. 
  The  dissipation potential $\mathrm{R}:\Omega \times \R\to [0,\infty]$ is given by
  \begin{equation}
  \label{diss-dam}
  \mathrm{R}(x,v): = \begin{cases}
  \kappa(x)|v| & \text{if } v \leq 0,
  \\
  \infty & \text{otherwise}
  \end{cases}
  \quad \text{with }  \kappa \in L^\infty(\Omega),  \ 0 <\kappa_0<\kappa(x) \ \foraa\, x \in \Omega\,.
  \end{equation}
\par 
  The Energetic formulation of the damage system \eqref{RIS-damage} is given in the following setup:
\paragraph{\bf Ambient space:} $\Xs=\spu \times \spz $ with 
\begin{subequations}
\label{damage-setup}
\begin{equation}
\label{sp-dam}
\spu = W_{\Gdir}^{1,p}(\Omega;\R^d): = \{ u\in W^{1,p}(\Omega;\R^d)\, : \ u=0 \text{ on } \Gdir \} \text{ and } \spz = W^{1,r}(\Omega)\,.
\end{equation}
Here, $p$ is as in \eqref{Wdam-3} below, and $ r >1$.
  The   topology $\sigma_\spu$ on the space of   admissible displacements is  the weak topology of $W^{1,p}(\Omega;\R^d)$; analogously, $ \sigma_\spz$ is  the weak 
$W^{1,r}(\Omega)$-topology. 
\paragraph{\bf  Energy functional:}
 $\calE: [0,T]\times \Xs \to (-\infty, \infty]$ is given by the sum of 
(1) the stored elastic energy $\calW$; (2)  a term $\calJ$ encompassing the gradient regularization and the indicator term $I_{[0,1]}(z)$;
 (3) the power of the external loadings, with the force term $\ell$ comprising volume and surface forces $f$ and $g$
 via 
 \[
 \pairing{}{}{\ell(t)}v: = \pairing{}{}{f(t)}v +  \pairing{}{}{g(t)}v,
 \]
where the duality pairings involving the forces $f$ and $g$ are nor specified for simplicity, and the   duality pairing  between $\ell$ and 
$u+w_\Dir(t)$ will be settled below, cf.\eqref{data-dam}.
  Namely, $\calE$ is defined by
\begin{align}
\label{en-dam}
\begin{aligned}
&
\enet tuz: = \mathcal{W}(t,u,z) + \mathcal{J}(z)   - \langle \ell(t), u+w_\Dir(t) \rangle \text{ with }
\begin{cases}
 \mathcal{W}(t,u,z) : = \int_\Omega W(x,\eps(u) + \eps(w_\Dir(t)), z) \dd x,
 \\
 \mathcal{J}(z): = \int_\Omega \left( \tfrac1{r}|\nabla z|^r +  I_{[0,1]}(z) \right) \dd x, \quad  r>1.
 \end{cases}
\end{aligned}
\end{align}
Then,
\[
 \domene {z} : = \{ z \in W^{1,r}(\Omega)\, : \ z(x) \in [0,1] \ \foraa\,x \in \Omega \}\,.
\] 
\paragraph{\bf Dissipation distance:}
We consider the asymmetric extended  quasi-distance $\mdsn{\spz}: \spz \times \spz \to [0,\infty]$ defined by 
\begin{equation}
\label{d-dam}
\mds{\spz}{z}{z'} : = \mathcal{R}(z'-z)  \quad  \text{ with }  \mathcal{R}:L^1(\Omega) \to [0,\infty], \  \calR(\zeta) : = \int_\Omega \mathrm{R}(x,\zeta(x)) \dd x \,.
\end{equation}
\end{subequations}
\par
Along the footsteps of \cite{ThoMie09DNEM}, for the elastic energy density  $W$ we assume 
\begin{subequations}
\label{Wdam}
\begin{align}
&
\label{Wdam-1}
W(x,\cdot,\cdot) \in \mathrm{C}^0(\bbM_{\mathrm{sym}}^{d\times d} \times \R) \quad \foraa\, x \in \Omega, \quad W(\cdot, e,z) \text{ measurable on } \Omega \quad \text{for all } (e,z) \in \bbM_{\mathrm{sym}}^{d\times d} \times \R;
\\
&
\label{Wdam-2}
W(x,\cdot, z) \text{ is convex for every } (x,z)\in \Omega \times \R;
\\
&
\label{Wdam-3}
\exists\, c_1,\, C_1>0 \ \exists\, p \in (1,\infty) \ \forall\, (x,e,z) \in \Omega \times \bbM_{\mathrm{sym}}^{d\times d} \times \R \, : \qquad W(x,e,z) \geq c_1|e|^p - C_1;
\\
&
\label{Wdam-4}
\begin{aligned}
&
\text{for all } (x,z)\in \Omega \times [0,1] \text{ we have } W(x,\cdot,z) \in \mathrm{C}^1(\bbM_{\mathrm{sym}}^{d\times d}) \text{ and }
\\
&\qquad \exists\, c_2, \, C_2>0 \ \forall\, 
 (x,e,z) \in \Omega \times \bbM_{\mathrm{sym}}^{d\times d} \times \R \, : \qquad |\mathrm{D}_e W(x,e,z)| \leq c_2 (W(x,e,z)+C_2);
 \end{aligned}
 \\
 &
\label{Wdam-5}
\begin{aligned}
&
\exists\, c_3,\, C_3>0 \ \forall\, (x,e,z),\, (x,e,\tilde z) \in \Omega \times \bbM_{\mathrm{sym}}^{d\times d} \times \R \text{ with } \tilde z \leq z \, \text{ there holds}
\\
&
\qquad W(x,e,z) \leq W(x,e,\tilde z) \leq c_3(W(x,e,z)+C_3)\,.
\end{aligned}
\end{align}
\end{subequations}
While referring to \cite[Sec.\ 3]{ThoMie09DNEM} for all details, here we may comment  that \eqref{Wdam-4}  enters in the proof of the power-control condition $<A.3>$ for the energy functional $\calE$ \eqref{en-dam}, whereas the `monotonicity' type requirement \eqref{Wdam-5} is helpful for the closedness condition $<C>$. 
As for the data $\ell $ and $w_\Dir$, we require
\begin{subequations}
\label{data-dam}
\begin{align}
&
w_\Dir \in \mathrm{C}^1 ([0,T];W^{1,\infty}(\Omega;\R^d));
\\
&
\ell \in \mathrm{C}^1 ([0,T];W^{-1,p'}(\Omega;\R^d)),
\end{align}
so that 
the power of the external loadings 
 features the duality pairing between $W_{\Gdir}^{-1,p'}(\Omega;\R^d)$ and $W_{\Gdir}^{1,p}(\Omega;\R^d)$. 
\end{subequations}
\paragraph{\bf The viscous correction:}
We will either take a viscous correction 
  of the form 
  \begin{subequations}
  \label{delta-damage}
  \begin{equation}
  \label{delta-dam-easy}
  \corrs \spz z{z'} = h(\calR(z'{-}z)),
  \end{equation} 
  with $h$ as in \eqref{a-funct-of-d}, or consider
 the viscous correction
\begin{equation}
\label{delta-dam}
\corrn_{\spz}: \spz \times \spz \to [0,\infty] \text{ defined by } \corrs \spz z{z'} : =
\begin{cases}
 \frac1q \| z-z'\|_{L^q(\Omega)}^q  & \text{ if } z,\, z' \in L^q(\Omega),
 \\
 \infty &\text{ otherwise},
 \end{cases}
 \quad \text{and } q>1
 \end{equation}
\end{subequations}
(cf.\ also Remark \ref{rmk:weaker-r-d} ahead). 
\par
The main result of this section guarantees the existence of $\VE$ solutions of the rate-independent damage system $\RIS$ given by
\eqref{damage-setup}. 
\begin{theorem}
\label{thm:VEdam}
Assume \eqref{diss-dam}, \eqref{Wdam}, and \eqref{data-dam}. 
If the viscous correction 
$\corrn_{\spz}$ is given by \eqref{delta-dam}, suppose in addition that
\begin{equation}
\label{rgreatedd}
r>d\,.
\end{equation}
%
Then, for every $z_0 \in \domene z $ there exists a $\VE$ solution $(u,z)$ of the rate-independent damage system $\RIS$ \eqref{damage-setup} with  the viscous correction 
$\corrn_{\spz}$ from \eqref{delta-damage}, such that   $z(0)=z_0$ and 
\begin{equation}
\label{summab-u-z}
u\in L^\infty(0,T;W^{1,p}(\Omega;\R^d)), \quad  z \in 
L^\infty(0,T;W^{1,r}(\Omega)) \cap \BV ([0,T];L^1(\Omega))\,.
\end{equation}
\end{theorem}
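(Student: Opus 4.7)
The plan is to verify the abstract assumptions $<T>$, $<A.1>$--$<A.3>$, $<B.1>$--$<B.3>$, and $<C>$ of Section~\ref{s:2} for the damage system set up in \eqref{damage-setup}, and then apply Theorem~\ref{thm:exists-VE}; the regularity \eqref{summab-u-z} of $(u,z)$ will follow from the a priori bounds arising from \eqref{sup-perto} together with the coercivity of $\calE$. Assumption $<T>$ is immediate: both $\spu$ and $\spz$ are reflexive and separable Banach spaces (hence Souslin in the weak topology) whose bounded sets are metrizable and therefore first-countable. The quasi-distance $\mdsn{\spz}$ separates points because $\mathrm{R}(x,v) = +\infty$ for $v>0$ forces $\mds{\spz}{z}{z'}=0$ to imply $z'\leq z$ and $z'=z$ a.e., hence $z=z'$.

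\textbf{Energy assumptions $<A>$.} Coercivity and lower semicontinuity of $\calW$ follow from \eqref{Wdam-2}--\eqref{Wdam-3} (convexity of $W(x,\cdot,z)$ and continuity of $W(x,e,\cdot)$), combined with the compact embedding $W^{1,r}(\Omega)\hookrightarrow L^s(\Omega)$ valid for $s$ in a suitable range (in particular in $\mathrm{C}(\overline\Omega)$ when $r>d$). The gradient term $\calJ$ is convex and weakly l.s.c., and the constraint $z\in[0,1]$ passes to weak-$W^{1,r}$ limits pointwise. Compactness $<A.2>$ then follows from \eqref{Wdam-3} and the $W^{1,r}$-coercivity of $\calJ$ plus Korn's inequality in $W^{1,p}_{\Gdir}$. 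For $<A.3>$, one computes
\begin{equation*}
\partial_t \enet tuz = \int_\Omega \rmD_e W(x,\eps(u{+}w_\Dir),z):\eps(\dot{w}_\Dir) \dd x - \langle \dot{\ell}, u{+}w_\Dir\rangle - \langle \ell, \dot{w}_\Dir\rangle
\end{equation*}
and estimates $|\partial_t \calE|$ by $\calF + C$ using \eqref{Wdam-4} together with the regularity \eqref{data-dam} of $w_\Dir$ and $\ell$. The (conditional) upper semicontinuity of $\partial_t\calE$ follows since convergence of energies $\calW(t_n,u_n,z_n)\to \calW(t,u,z)$ upgrades the weak $\eps(u_n)$-convergence through strict convexity/uniform convexity considerations and $\rmD_e W$ is continuous in its arguments.

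\textbf{Viscous correction $<B>$.} For $\delta_\spz(z,z')=h(\calR(z'{-}z))$ with $h$ as in \eqref{a-funct-of-d}, all of Assumption $<B>$ is guaranteed by Remark~\ref{rmk:MS}. For the $L^q$-correction \eqref{delta-dam}, we verify the stronger sufficient condition \eqref{will-be-checked}: because of the constraint $z\in[0,1]$ and the asymmetric form of $\calR$ which forces $z'\leq z$ when $\calR(z'{-}z)<\infty$, one has $\kappa_0\|z'{-}z\|_{L^1}\leq \calR(z{-}z')$, so
\begin{equation*}
\frac{\delta_\spz(z,z')}{\mds{\spz}{z}{z'}} \leq \frac{1}{q\kappa_0}\|z'{-}z\|_{L^\infty}^{q-1}.
\end{equation*}
Assumption \eqref{rgreatedd}, namely $r>d$, yields the compact embedding $W^{1,r}(\Omega)\hookrightarrow\mathrm{C}(\overline\Omega)$, so that weak $W^{1,r}$-convergence $z_n\rightharpoonup z$ implies $\|z_n{-}z\|_{L^\infty}\to 0$, which together with $\mds{\spz}{z_n}{z}\to 0$ gives both \eqref{B2} (i.e., $<B.2>$) and \eqref{will-be-checked} (hence $<B.3>$). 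Compatibility $<B.1>$ holds because $\mds{\spz}{z}{z'}=0$ implies $z=z'$.

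\textbf{Closedness $<C>$: the main obstacle.} This is the crucial step and reduces, by \cite[Lemma 3.11]{SavMin16}, to constructing mutual recovery sequences in the sense of \eqref{MRS}. Given $(t_n,z_n)\to(t,z)$ with $z_n\rightharpoonup z$ weakly in $W^{1,r}(\Omega)$ and $\calI(t_n,z_n)\to\calI(t,z)+\eta$, and a competitor $z'\in M(t,z)$, one defines
\begin{equation*}
z_n' := \min\{z',z_n\}
\end{equation*}
(so that $z_n'\leq z_n$ pointwise, ensuring $\calR(z_n{-}z_n')<\infty$ and hence admissibility in the asymmetric dissipation, and so that $z_n'\rightharpoonup \min\{z',z\}=z'$ in $W^{1,r}$ since $z'\leq z$ a.e., this being a consequence of the stability of $z$ against upward perturbations being impossible in the unilateral setting). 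Using \eqref{Wdam-5} (monotonicity of $W$ in $z$), the strong convergence $z_n'\to z'$ in $L^s$ for every $s$ (by the compact embedding), continuity of the elastic term in $e$, and the weak lower/upper semicontinuity of the gradient term (controlled via truncation), one shows
\begin{equation*}
\limsup_{n\to\infty}\bigl(\calI(t_n,z_n'){+}\cmds{\spz}{z_n}{z_n'}\bigr) \leq \calI(t,z'){+}\cmds{\spz}{z}{z'}+\eta,
\end{equation*}
where the $\delta_\spz$-part passes to the limit using \eqref{B2} (or the continuity of $h\circ \calR$, respectively). The main technical point is the control of the $W^{1,r}$-gradient term in $\calJ$ after truncation by $z'$, which is where \cite{ThoMie09DNEM,Thom11QEBV} provide the needed chain rule / truncation estimates; the assumption $r>1$ (and, for the $L^q$-correction, $r>d$) ensures the pointwise and uniform control required. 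With all four assumptions in hand, Theorem~\ref{thm:exists-VE} produces the desired $\VE$ solution starting from $z_0\in\domene z$, and the bounds \eqref{sup-perto} combined with the coercivity established in the proof of $<A.2>$ give \eqref{summab-u-z}.
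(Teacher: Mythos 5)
Your overall strategy (verify $<T>$, $<A>$, $<B>$, $<C>$ and invoke Theorem~\ref{thm:exists-VE}) is the same as the paper's, and your treatment of $<B>$ via the Morrey embedding is actually a nice simplification: instead of the paper's Gagliardo--Nirenberg interpolation $\|z{-}z'\|_{L^q}\le C\|z{-}z'\|_{W^{1,r}}^\theta\|z{-}z'\|_{L^1}^{1-\theta}$ (which requires choosing $\theta$ with $(1{-}\theta)q>1$, feasible iff $r>d$), you directly bound $\delta_\spz(z_n,z)/\mds{\spz}{z_n}{z}\le \tfrac1{q\kappa_0}\|z_n{-}z\|_{L^\infty}^{q-1}$ and use $W^{1,r}(\Omega)\Subset \mathrm{C}(\overline\Omega)$ when $r>d$. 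Both approaches give $r>d$ for the correction \eqref{delta-dam}; the paper's interpolation route is what permits the weakening in Remark~\ref{rmk:weaker-r-d} upon changing $\gamma$, but for the theorem as stated your argument is cleaner.

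However, there is a genuine gap in your verification of $<C>$. Your mutual recovery sequence $z_n' := \min\{z',z_n\}$ is \emph{not} the correct construction. The paper, following the proof of \cite[Thm.~3.14]{ThoMie09DNEM}, must use
\[
z_n' := \min\{(z'-\delta_n)^+, z_n\}, \qquad \delta_n := \|z_n - z\|_{L^r(\Omega)}^{1/r} \to 0,
\]
and the shift by $\delta_n$ is essential. The crucial estimate to establish \eqref{MRS} is the \emph{difference} bound
\[
\limsup_{n\to\infty}\bigl(\calJ(z_n')-\calJ(z_n)\bigr)\le \calJ(z')-\calJ(z),
\]
not the $\limsup$ of $\calJ(z_n')$ alone, which would be the wrong direction from weak lower semicontinuity. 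With your naive choice, the set $A_n=\{z_n<z'\}$ does \emph{not} shrink to measure zero: on $\{z=z'\}$ (which can have positive measure) the inclusion $z_n<z'=z$ oscillates, and the restricted integrals $\int_{A_n^c}(|\nabla z'|^r-|\nabla z_n|^r)$ cannot be controlled in the limit. The $\delta_n$-shift fixes this precisely because then $A_n=\{z_n<(z'-\delta_n)^+\}\subset\{z-z_n>\delta_n\}$ has measure $\le \|z_n-z\|_{L^r}^{r-1}\to 0$ by Chebyshev (using $z'\le z$), which is what makes the energy-difference estimate go through. So you need to replace your $z_n'$ with the $\delta_n$-shifted construction and carry out the Thomas--Mielke energy comparison.

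A secondary remark: your claim that the conditional upper semicontinuity in $<A.3'>$ follows from ``strict convexity/uniform convexity considerations'' is not supported by the hypotheses (\eqref{Wdam-2} gives only convexity of $W(x,\cdot,z)$). The paper instead appeals to the uniform continuity of $\partial_t\calE$ in $t$ (proved in \cite[Lemma~3.11]{ThoMie09DNEM}) and applies Proposition~\ref{prop:GF}, which derives convergence of powers from convergence of energies without any strict convexity.
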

\noindent The \emph{proof} will be carried out in Sec.\ \ref{ss:3.1} below.
\begin{remark}
\label{rmk:weaker-r-d}
\upshape
The condition $r>d$ can be weakened to the   requirement
\begin{equation}
\label{compatibility-below}
r >\frac{qd}{q+d}\,,
\end{equation} 
on  $r$, $q$, and the space dimension $d$, provided that we replace the viscous correction \eqref{delta-dam} by 
 \begin{equation}
 \label{altern-corr}
 \tilde{\delta}_\spz : \spz \times \spz \to [0,\infty] \text{ given by } 
 \tilde{\delta}_{\spz}(z,z'): = 
 \begin{cases}
 \frac1q \| z-z'\|_{L^q(\Omega)}^\gamma  & \text{ if } z,\, z' \in L^q(\Omega),
 \\
 \infty &\text{ otherwise}
 \end{cases}
 \end{equation}
and $\gamma>1$ satisfying a further compatibility condition with  $r$ and $q$, cf.\ \eqref{compatible-exps} in Remark \ref{rmk:gamma-techn} ahead.
\end{remark}
\begin{remark}[$\VE$ solutions are in between $\ENE$ and $\BV$ solutions (I)]
\upshape
\label{rmk:inbetw-dam}
The application of the $\VE$ concept to damage reveals
that this weak solvability notion  has an
 \emph{intermediate} character  between Energetic  and Balanced Viscosity solutions.
Indeed,
\begin{compactitem}
\item[-] When the viscous correction is given by \eqref{delta-dam-easy}, then the existence theory  for $\VE$-solutions works under the same conditions as for $\ENE$ solutions, cf.\ \cite{ThoMie09DNEM}. In particular, it is possible to consider 
a gradient regularization with an \emph{arbitrary} exponent $r>1$; the restriction $r>d$ (or \eqref{compatibility-below}) comes into 
play only upon choosing the viscous correction \eqref{delta-dam} (or \eqref{altern-corr}).
\item[-] Balanced Viscosity solutions to the rate-independent system
\eqref{RIS-damage} have   been in turn   addressed in \cite{KnRoZa17}, with a \emph{quadratic} viscous regularization (modulated by a vanishing parameter). 
The vanishing-viscosity analysis developed in   \cite{KnRoZa17}  crucially relies on the requirement  $r>d$ and, additionally, on the \emph{quadratic} character of the elastic energy density $W$, as well as 
 on smoothness requirements on the reference domain $\Omega$ (the smoothness of $\Omega$ can be dropped if 
 the   nonlinear $r$-Laplacian is replaced by a less standard fractional Laplacian regularization, cf.\  \cite{KnRoZa13VVAR}). Here, instead, we can allow for  an energy density $W$ of arbitrary $p$-growth and we do not need to restrict to smooth domains.
\end{compactitem}
\end{remark}
\subsection{Proof of Theorem \ref{thm:VEdam}}
\label{ss:3.1}
In what follows, we are going to check that the  rate-independent damage system $\RIS$ given by 
\eqref{damage-setup} complies with Assumptions 
$<A>$,  $<B>$ ,  and  $<C>$ of Theorem \ref{thm:exists-VE} (it is immediate to see that $<T>$ is satisfied).
As it will be clear from the ensuing proof, $<A>$  and  $<C>$  can be checked under the sole condition that the exponent $r$ is strictly bigger than $1$.  It is in the proof of $<B>$, in the case  
the viscous correction  $\delta$ is   given by 
\eqref{delta-dam}, that  the restriction $r>d$ comes into play.
\paragraph{\bf $\vartriangleright $ Assumption $<A>$:}
It was shown 
in \cite[Lemma 3.3]{ThoMie09DNEM} that $\calE$  satisfies the coercivity estimate
\begin{equation}
\label{coerc-dam}
\exists\, c_4,\, C_4>0 \ \forall\, (t,u,z) \in [0,T]\times \spu \times \spz \, : \quad \enet tuz \geq c_4  (\|u\|_{W^{1,p}(\Omega;\R^d)}^p + \|z\|_{W^{1,r}(\Omega)}^r  ) -C_4\,.
\end{equation}
Hence, the  sublevels  of $\enet t{\cdot}{\cdot}$ are bounded in $W^{1,p}(\Omega;\R^d)\times W^{1,r}(\Omega)$, uniformly w.r.t.\ $t\in[0,T]$.
In \cite[Lemma 3.4]{ThoMie09DNEM}  it was proved that $\calE(t,\cdot,\cdot)$ is sequentially lower semicontinuous w.r.t.\ the weak topology on 
 $W^{1,p}(\Omega;\R^d)\times W^{1,r}(\Omega)$.
 In view of  \eqref{data-dam},
 a standard modification of that argument yields the lower semicontinuity of $\calE$, hence $<A.1>$. 
  Therefore, its sublevels are (sequentially)
compact in $[0,T]\times \spu\times \spz$
 w.r.t.\ to the $\sigma_\R$-topology.  This ensures the validity of  $<A.2>$ .
 \par
 It was shown in \cite[Thm.\ 3.7]{ThoMie09DNEM}  that  there exist constants $c_5,\, C_5>0$ such that for all $(u,z) \in [0,T]\times \spu 
 \times \domene z  $ the function  $t\mapsto \enet tuz $ belongs to $\mathrm{C}^1([0,T])$, with
 \[
 \begin{aligned}
 &
 \partial_t \enet tuz= \int_\Omega  \mathrm{D}_e W(x,\eps(u+w_\Dir(t)),z) \colon \eps(\dot{w}_\Dir(t)) \dd x  - \langle \dot{\ell}(t), u+w_\Dir(t) \rangle - \langle \ell(t), \dot{u}_{\mathrm{D}}(t) \rangle \quad \text{and}
 \\
 & 
|\partial_t \enet tuz| \leq c_5(\enet tuz +C_5)\quad \text{for all } (t,u,z) \in (0,T) \times \domene{},
\end{aligned}
\]
whence 
 \eqref{power-control}. 
We now check $<A.3'>$: observe that $\mdsn{\spz}$ is left-continuous on the sublevels of $\calF_0$ since the latter subsets are bounded in $W^{1,r}(\Omega)$ by \eqref{coerc-dam} and $W^{1,r}(\Omega)\Subset L^1(\Omega)$. 
It remains to prove the conditional upper semicontinuity
\eqref{usc-power}
 of $\partial_t \calE$. For this, we apply \cite[Lemma 3.11]{ThoMie09DNEM}, ensuring that $\partial_t \calE$ complies with 
 \eqref{unif-cont}. Then, we are in a position to apply Proposition  \ref{prop:GF} and conclude the validity of property 
\eqref{usc-power}. 
\paragraph{\bf $\vartriangleright $ Assumption   $<C>$:}
We will verify property \eqref{MRS}
in the case of the viscous correction \eqref{delta-dam} (the case \eqref{delta-dam-easy}  can be handled with similar calculations). 
 Let $(t_n,u_n,z_n)_n$, $(t,u,z)$ fulfill the conditions of \eqref{MRS}, and let  $(u',z')$ be \emph{any} element in $M(t,z)$.
Preliminarily,
from $\sup_{n\in \N} \enet {t_n}{u_n}{z_n} <\infty$
we deduce, via \eqref{coerc-dam}, that the sequence $(z_n)_n$ is bounded in 
$W^{1,r}(\Omega)$ and, thus, that 
$z_n\weakto z$ in $W^{1,r}(\Omega)$ as $n\to\infty$. Since $0\leq z_n \leq	1$ a.e.\ in $\Omega$, 
we then infer that 
$z_n\to z$ in $L^s(\Omega)$ for all $s\in [1,\infty)$. 
 For the sequence $(u_n',z_n')_n$ we borrow the construction  for the \emph{mutual recovery sequence} devised in the proof of \cite[Thm.\ 3.14]{ThoMie09DNEM}.
  Note that this construction is in fact  applicable to \emph{any}  $(u',z') \in \spu \times \domene z$  such that $\mathcal{R}(z'{-}z)<\infty$. In particular, we pick $u'\in \mathrm{Argmin}_{u\in \spu} \enet tu{z'}$.
Namely, we set for every $n\in \N$
\begin{equation}
\label{mrs-marita}
\begin{aligned}
&
u_n' := u'
\\
&
z_n':= \min\{(z'-\delta_n)^+, z_n\} = \begin{cases}
(z'-\delta_n)^+ & \text{if } (z'-\delta_n)^+\leq z_n,
\\
z_n & \text{if } (z'-\delta_n)^+> z_n,
\end{cases}
\quad \text{with } \delta_n : = \|z_n-z\|_{L^r(\Omega)}^{1/r}\to 0 \text{ as } n \to\infty. 
\end{aligned}
\end{equation}
Observe that this construction gives
$z_n' \in W^{1,r}(\Omega)$ as well as
 $0\leq z_n'\leq z_n \leq 1$ a.e.\ in $\Omega$, so that $\calR(z_n'-z_n)<\infty$.
In the proof of \cite[Thm.\ 3.14]{ThoMie09DNEM} it is shown that 
\begin{equation}
\label{strong-w1r}
z_n' \rightharpoonup z' \quad \text{in } W^{1,r}(\Omega) \quad \text{as } n\to\infty\,.
\end{equation}
Slightly adapting the argument from  \cite[Thm.\ 3.14]{ThoMie09DNEM} to allow for a sequence $(t_n)_n$ of times converging to $t$, we find that
\[
\limsup_{n\to\infty} \left( \enet {t_n}{u_n'}{z_n'} + \mds{\spz}{z_n}{z_n'}  \right) 
\leq
\enet {t}{u'}{z'} + \mds{\spz}{z}{z'}\,.  
\]
Therefore, 
for the reduced energy $\calI(t,z) = \min_{u \in \spu} \enet tuz$
we deduce 
\begin{equation}
\label{1limsup}
\limsup_{n\to\infty} \left( \red {t_n}{z_n'} + \mds{\spz}{z_n}{z_n'}  \right) 
\leq
\red {t}{z'} + \mds{\spz}{z}{z'},
\end{equation}
 where we have used that $\calI(t_n,z_n') \leq  \enet {t_n}{u_n'}{z_n'}   $ and that 
$\calI(t,z') =\enet t{u'}{z'} $  by our choice of $u'$.  
On other other hand,  again using that $0\leq	z_n'\leq 1 $ a.e.\ in $\Omega$, from 
\eqref{strong-w1r} we infer that $z_n' \to z'$ in $ L^s(\Omega)$ for every $s\in [1,\infty)$. All in all,
 we  gather that $z_n\to z$  and $z_n'\to z'$ in $ L^q(\Omega)$. 
 Therefore,
 \begin{equation}
\label{2limsup}
 \lim_{n\to\infty}\corrs{\spz}{z_n}{z_n'} = \corrs{\spz}{z}{z'}\,
 \end{equation}
 which, combined with 
\eqref{1limsup}, finishes the proof of property \eqref{MRS}.
\paragraph{\bf $\vartriangleright $ Assumption   $<B>$:}
The viscous correction $\delta $ from \eqref{delta-dam} clearly complies with $<B.1>$ and $<B.2>$. To check $<B.3>$, we verify property \eqref{will-be-checked}.  
Preliminarily, observe that 
with the Gagliardo-Nirenberg inequality we have
\begin{equation}
\label{GN-est}
\frac{\delta_\spz (z',z)}{\mds{\spz}{z'}z} = \frac1q  \frac{ \| z{-}z'\|_{L^{q}(\Omega)}^q}{ \| z{-}z'\|_{L^{1}(\Omega)}} \leq C \frac{ \| z{-}z'\|_{W^{1,r}(\Omega)}^{\theta q}  \| z{-}z'\|_{L^{1}(\Omega)}^{(1{-}\theta) q }}
{ \| z{-}z'\|_{L^{1}(\Omega)}} 
\end{equation}
with
\begin{equation}
\label{GN}
\frac1q =\theta\left(\frac1r-\frac1d \right) + 1-\theta\,.
\end{equation}
Since $r>d$, 
there exists $\theta \in (0,1)$ complying with \eqref{GN}. 
\par Let us now consider $(t,z) \in \mathscr{S}_{\cmdn}$ and a sequence $(t_n,z_n)\widetilde{\rightharpoonup}(t,z) $, namely
$(t_n,z_n)_n
\subset \mathscr{S}_{\cmdn}$, 
$t_n \uparrow t$, 
$
z_n \weakto z$ in $W^{1,r}(\Omega)$,  $\calR(z_n{-}z)\to 0$.
Then, 
\begin{equation}
\label{W1r-bounded}
\sup_{n\in \N} \|z_n \|_{W^{1,r}(\Omega)} \leq C.
\end{equation}
Therefore,
\[
\limsup_{(t_n,z_n)\widetilde{\rightharpoonup}(t,z) } \frac{\delta_\spz (z_n,z)}{\mds{\spz}{z_n}z} \stackrel{(1)}{\leq} C \limsup_{(t_n,z_n)\widetilde{\rightharpoonup}(t,z)}    \frac{ \| z{-}z_n\|_{W^{1,r}(\Omega)}^{\theta q}  \| z{-}z_n\|_{L^{1}(\Omega)}^{(1{-}\theta) q }}
{ \| z{-}z_n\|_{L^{1}(\Omega)}}   \stackrel{(2)}{\leq}  C' \limsup_{(t_n,z_n)\widetilde{\rightharpoonup}(t,z)}     \| z{-}z_n\|_{L^{1}(\Omega)}^{(1{-}\theta) q - 1 }
  \stackrel{(3)}{=} 0,
\]
where {\footnotesize (1)} follows from \eqref{GN-est}, {\footnotesize (2)} from \eqref{W1r-bounded}, and  {\footnotesize (3)} from the fact that, since $r>d$, the exponent $\theta$  in \eqref{GN} fulfills $(1{-}\theta) q >1$.
This finishes the proof of  \eqref{will-be-checked}.
\paragraph{\bf Conclusion of  the proof:} Theorem \ref{thm:exists-VE} applies, yielding the existence of a $\VE$ solution.  The summability properties \eqref{summab-u-z} for $u$ and $z$ follow from combining the coercivity property 
\eqref{coerc-dam}
 with the energy bound $\sup_{t \in [0,T]} |\enet t{u(t)}{z(t)}| \leq C$, cf.\ \eqref{sup-perto} in Remark \ref{rmk:sth-about-VE}.
\QED
\begin{remark}
\upshape
\label{rmk:gamma-techn}
Observe that  \eqref{compatibility-below} is the sharpest condition ensuring that $\theta $ given by \eqref{GN} is in $(0,1)$.
 The requirement $r>d$ can be weakened to    \eqref{compatibility-below}, provided that we replace the viscous correction $\delta$ from \eqref{delta-dam} by that in \eqref{altern-corr}, with $\gamma>1$ chosen in such a way that  $\theta $ from \eqref{GN} fulfills
$(1-\theta)\gamma>1$. 
This amounts to imposing the following 
 condition on $\gamma$
\begin{equation}
\label{compatible-exps}
\gamma \left( \frac1q {-} \left( 1{-}\frac1q \right) \frac{d{-}r}{dr{+}r{-}d}  \right)>1.
\end{equation}
For instance,  if $d=3$ and $r=2$ (i.e.\ we consider the standard Laplacian regularization), 
then $q=2$ complies with the compatibility condition  \eqref{compatibility-below}. An admissible viscous correction would then be 
\[
\delta_{\spz}(z,z'): = \frac12 \| z'{-}z\|_{L^2(\Omega)}^\gamma \quad \text{with } \gamma>\frac52.
\]
\end{remark}
\section{Visco-Energetic solutions for plasticity at finite strains}
\label{s:4}
\noindent
We consider a  model for elastoplasticity  at finite strains in a bounded  body $\Omega \subset \R^d$  with Lipschitz boundary.
Finite 
plasticity is based on the multiplicative decomposition of the
gradient of the 
 elastic deformation $\varphi: \Omega \to \R^d$ into an elastic and a plastic part, i.e.\
$\nabla \varphi = F_{\mathrm{el}} P$
with $P \in \R^{d\times d}$ the plastic tensor, usually assumed with determinant $\mathrm{det}(P)=1$.  While the elastic part $F_{\mathrm{el}} =\nabla \varphi P^{-1}$ contributes to  energy storage and is at elastic equilibrium, energy is dissipated through changes of the plastic tensor, which  thus plays the role of a  (dissipative)  internal variable.
\par
The model for  rate-independent finite-strain plasticity we address was  first analyzed in \cite{MaiMi08} within the framework of energetic solutions.
The PDE system in the unknowns $(\varphi, P)$ can be formally written as 
\begin{subequations}
\label{PDE-finite-plast}
\begin{equation}
\label{PDE-finite-plast-syst}
\begin{aligned}
& 
\varphi(t) \in \mathrm{Argmin} \left( 
\int_{\Omega} W(x,\nabla \hat{\varphi}P^{-1}(t)) \dd x - \langle \ell(t), \hat{\varphi} \rangle\, : \ \hat\varphi \in \mathscr{F}  
\right),  &&\quad t \in (0,T),
\\
&
\partial\mathrm{R}(\dot P P^{-1}) \transpi{P}    + \btransp{\nabla \varphi P^{-1}} \mathrm{D}_{F} W(x,\nabla \varphi P^{-1}) \transpi{P}    && 
\\
& 
 \quad \quad \quad \quad \quad  \quad \quad \quad + \rmD_{P}H(x,P,\nabla P) -\mathrm{div}(\rmD_{\nabla P} H(x,P,\nabla P)) =0,  && \quad  (x,t)\in \Omega \times (0,T)\,.
\end{aligned}
\end{equation}
Here, $W = W(x,F)$ is  the elastic energy density, $\ell$ is  a time-dependent loading, e.g.\ associated with an  applied body force $f$ and a traction $g$ on the Neumann part $\Gneu$ of $\partial\Omega$, $\mathscr{F}  $ is  the set of admissible deformations (cf.\ \eqref{time-dependend-Dir-plast} below), 
  the dissipation potential $\mathrm{R}(x,\cdot)$ is 
$1$-homogeneous, and the energy density $H$ encompasses hardening and regularizing effects through the term 
 $\int_\Omega |\nabla P|^r \dd x $, for some $r>1$ specified later.
 System \eqref{PDE-finite-plast} is  further supplemented with 
  a time-dependent Dirichlet condition for $\varphi$
\begin{equation}
\label{time-dependend-Dir-plast}
\varphi(t,x) = \gdir (t,x) \quad (t,x) \in [0,T] \times \Gdir,
\end{equation}
with
$\gdir :[0,T] \times \Gdir \to \R^d$ given on the  Dirichlet boundary
$\Gdir \subset \partial\Omega$ such that  $\mathscr{H}^{d-1}(\Gdir)>0$. 
\end{subequations}
Following
\cite{FraMie06ERCR, MaiMi08}, to treat  \eqref{time-dependend-Dir-plast} 
compatibly with the multiplicative decomposition of $\nabla \varphi$,
we  will  seek for $\varphi$ in the form of a composition
\begin{equation}
 \label{multiplic-split}
 \varphi(t,x) = \gdir (t, y(t,x)) \quad \text{with } y(t,\cdot) \text{ fulfilling  } y =\mathrm{Id}
 \text{ on } \Gdir,
\end{equation}
where we have denoted by the same symbol the extension of $\gdir$ 
 to $[0,T]\times
\R^d$, cf.\ \eqref{conds-g} below.  
\par
Therefore, we  consider the pair $(y,P)$ as state variables and, accordingly, the  Energetic formulation  of system
\eqref{PDE-finite-plast} is given in the following setup:
\paragraph{\bf Ambient space:} we  take $X=U\times Z$, with 
\begin{subequations}
\label{setup-plast}
\begin{equation}
\label{sp-plast}
\begin{gathered}
\spu := \left\{ y \in
W^{1,q_{Y}}(\Omega;\R^d)\, : \ y = \mathrm{Id}  \ \text{on
$\Gamma_{\Dir}$} \right\} \ \ \text{for  $q_Y>1$ to be
specified later, and }
\\
\spz = \{ P \in  W^{1, r}(\Omega ;\R^{d\times d}) \cap L^{q_P}(\Omega;\R^{d\times d}) \, : \, P(x) \in G \quad \foraa\, x \in \Omega \}, \quad  q_P,\, r>1 \text{ specified below}. 
\end{gathered}
\end{equation}
Here, $G$ is a  Lie subgroup of 
$ \mathrm{GL}^+(d): =\{ P\in \R^{d\times d})\, : \ \mathrm{det}(P)>0\} $.
From now on, we will focus on the case
\[
G = \mathrm{SL}(d): =\{ P\in \R^{d\times d})\, : \ \mathrm{det}(P)=1\}
\]
cf.\ \cite{Mie02} for other examples of $G$.
We take $\sigma_\spu$ as  the weak topology of $W^{1,q_{Y}}(\Omega;\R^d)$ and $\sigma_\spz$ as  the weak topology of $W^{1, r}(\Omega ;\R^{d\times d}) \cap L^{q_P}(\Omega;\R^{d\times d}) $.
\paragraph{\bf Energy functional:} 
    $\calE: [0,T]\times \Xs \to (-\infty, \infty]$ is given by 
\begin{equation}
\label{en-plast}
\enet t{y}P: = \calE_1(P) + \calE_2(t,y,P).
\end{equation}
The functional
$\calE_1: [0,T]\times \spz \to \R$ includes the hardening and gradient regularizing terms, i.e.
\[
\calE_1(P) = \int_\Omega H(x,P(x),\nabla P(x)) \dd x \text{ with } H:\Omega \times  \R^{d\times d} \times  \R^{d\times d \times d}\to \R    \text{ fulfilling \eqref{hyp-e-plast} below}.
\]
The stored elastic energy $\calE_2$ reflects the  multiplicative split
for the deformation gradient
 $\nabla \varphi = \nabla \gdir (t,y) \nabla y $ due to \eqref{multiplic-split}, and it is thus of the form 
\[
\calE_2(t,y,P) := \int_\Omega W(x,\nabla \gdir(t,y) \nabla y P^{-1} ) \dd x  - \pairing{}{W^{1,q_Y}}{\ell(t)}{\gdir(t,y)},
\]
with the elastic energy density $W$ specified ahead and $\nabla \gdir$ the gradient of $\gdir$ w.r.t.\ the variable $y$. 
\paragraph{\bf  Dissipation distance:}
Along the footsteps of   \cite{MaiMi08} (cf.\ also \cite{Mie02,HMM03}), we consider on $X$ dissipation distances 
of the form 
\begin{equation}
\label{d-plast}
\mathsf{d}_\spz(P_0,P_1): = \int_\Omega \mathcal{R}(P_1(x)P_0^{-1}(x)) \dd x,
\end{equation}
 \end{subequations}
where the functional $\mathcal{R} :  \SLD \to [0,\infty)$
(for simplicity, we omit the possible $x$-dependence of $\mathcal{R}_1$)
 is generated by a norm-like function $\mathrm{R}$, cf.\ \eqref{hyp-R-plast} below, 
  on the Lie-algebra $T_{\bf 1} \SLD$
 via the formula
\[
\mathcal{R}(\Sigma): = \inf \left\{ \int_0^1 \mathrm{R}(\dot{\Xi}(s)\Xi(s)^{-1})\dd s \, : \ \Xi \in \mathrm{C}^1([0,1]; G), \ \Xi(0)=\mathbf{1}, \ \Xi(1) = \Sigma \right\}.
\]
\par
Let us now detail our assumptions on the constitutive functions $H$ and $W$,  on
$\mathrm{R}$, and on 
 the problem data. 
\begin{subequations}
\label{hyp-e-plast}
The hardening function $H$ satisfies 
\begin{equation}
\begin{aligned}
& 
H:\Omega \times \R^{d\times d} \times \R^{d\times d \times d} \to \R \text{ is a normal integrand,  } H(x,P,\cdot) \text{ convex  for all } (x,P) \in  \Omega \times   \R^{d\times d}\,,
\\
&
\exists\, c_1>0\  \exists\, h \in L^1(\Omega)  \ \exists\, \, q_P>1, \, r>1\,   \ 
\foraa\, x \in \Omega   \ 
 \forall\, (P,A) \in \R^{d\times d} \times  \R^{d\times d \times d}\, :
 \\
 &  \qquad \qquad \qquad \qquad  \qquad \qquad H(x,P,A)\geq h(x) + c_1(|P|^{q_P} + |A|^{r})\,,
\end{aligned}
\end{equation}
while we require the following conditions on the elastic energy density $W: \Omega \times \R^{d\times } \to [0,\infty]$: Firstly,
\begin{align}
  \label{w0} & \dom(W) = \Omega \times \GLD,
  \text{ i.e. }  W(x,F) = \infty \text{ for } \mathrm{det}F \leq 0
  \text{ for all $x \in \Omega$},
  \\
\label{w1}  &
\begin{aligned}
&  \exists \, c_2>0 \ 
 \exists\, j \in L^1 (\Omega) \ 
\exists\, \qf> d  \ \   \forall (x,F) \in \dom(W):
\quad W(x,F) \geq j(x) +c_2 |F|^{\qf},
\end{aligned}
 \intertext{
 and we impose a further compatibility condition between the integrability powers $q_Y,\qf, q_P$, i.e.}
 &
\label{integrab-powers}
\frac1{q_F} + \frac1{q_P} = \frac1{q_Y}<\frac1d.
 \intertext{
 Secondly, $W(x,\cdot): \R^{d\times d} \to
(-\infty,\infty]$ is \emph{polyconvex}  for all $x \in
\Omega$, i.e.\ it is a convex function of its minors:
} & \label{w2}
\begin{aligned}
&
 \exists\, \mathbb W: \Omega \times \R^{\mu_d} \to
(-\infty,\infty] \ \text{such that}
\\ &
\text{(i)} \ \mathbb W \ \text{is  a normal integrand,}
\\
& \text{(ii)} \  \forall\, (x,F) \in \Omega \times \R^{d \times d}\,
:
 \ \ W(x,F)= \mathbb W(x,\mathbb M (F)),
\\
& \text{(iii)} \ \forall x\in \Omega\,:  \  \ \mathbb W(x,\cdot):
\R^{\mu_d} \to (-\infty,\infty] \text{ is convex,}
\end{aligned}
\intertext{where $\mathbb M : \R^{d\times d}\to R^{\mu_d}$ is  the
  function which maps a matrix to all its minors, with $\mu_d :=
  \sum_{s=1}^d \binom{d}{s}^2 $. Thirdly,  $W$  satisfies the multiplicative stress control conditions} & \label{w3}
\begin{aligned}
& \exists\, \delta>0 \    \ \exists\, c_3,\, c_4>0    \ \  \forall\,
(x,F) \in \dom(W) \ \forall\, N \in \mathcal{N}_\delta\,:
\\
&   \text{(i)} \  \text{$W(x,\cdot) : \mathrm{GL}^+(d) \to \R$ is differentiable,} 
\\
& \text{(ii)} \ |  \rmD_F W(x,F) \transp{F}  | \leq  c_3 (W(x,F) +1),
\\
& \text{(iii)} \ | \rmD_F W(x,F)\transp{F} -  \rmD_F
W(x,N F) \transp{(NF)} | \leq  c_4 |N -\mathbf{1}| (W(x,F) +1),
\end{aligned}
\end{align}
with  $\mathcal{N}_\delta: = \left\{N \in \R^{d \times d}\, : \
|N-\mathbf{1}| <\delta \right\}. $
\end{subequations}
We refer to \cite{MaiMi08} for examples of functionals $H$ and $W$ complying with \eqref{hyp-e-plast}. 
Finally, the functional (whose possible dependence on $x$ is neglected by simplicity) 
 \begin{equation}
 \label{hyp-R-plast}
 \begin{gathered}
\mathrm{R} : T_{\bf 1} \SLD \to [0,\infty)  \text{ is $1$-positively homogeneous and   fulfills} 
\\
\exists\, c_R,\, C_R>0 \ \forall\, \Sigma \in T_{\bf 1} \SLD \, : \quad
c_R|\Sigma | \leq  \mathrm{R}(\Sigma)  \leq C_R |\Sigma |\,,
\end{gathered}
\end{equation}
cf.\ \cite{HMM03} for examples in von-Mises  and single-crystal plasticity. 
For the Dirichlet loading  $\gdir$  we require 
\begin{subequations}
\label{data-plast}
\begin{equation}
 \label{conds-g}
 \begin{aligned}
 &
 \gdir \in \rmC^1 ([0,T]\times \R^d;\R^d), \quad \nabla \gdir \in
 \mathrm{BC}^1  ([0,T]\times \R^d;\R^{d\times d} ), \\  
 &
 \exists\, c_5 >0 \ \  \forall\, (t,x) \in [0,T]\times \R^d\,:  \ \ 
 \ |\nabla \gdir (t,x)^{-1}| \leq c_5,
 \end{aligned}
\end{equation} 
where $\mathrm{BC}$ stands for \emph{bounded continuous}. Finally, on the external load $\ell$ we impose  
\begin{equation}
\label{hyp-ell}
\ell \in \mathrm{C}^1([0,T]; W^{1,q_Y}(\Omega;\R^d)^*)\,.
\end{equation}
 \end{subequations}
\paragraph{\bf The viscous correction:}
We will    take  viscous corrections 
 \begin{compactenum}
 \item either of the form 
  \begin{subequations}
  \label{delta-plast}
  \begin{equation}
  \label{delta-plast-easy}
  \corrs \spz {P_0}{P_1} = h(\mathrm{d}_\spz(P_0,P_1)) \quad \text{with $h$ as in \eqref{a-funct-of-d}},
  \end{equation} 
  \item or
we  define 
   $\corrn_{\spz}: \spz \times \spz \to [0,\infty]$  by 
  \begin{equation}
\label{delta-dir}
  \corrs \spz {P_0}{P_1} : =
  \begin{cases}
 \int_\Omega \mathrm{R}_q ((P_1(x)-P_0(x))P_1(x)^{-1})  \dd x =   \int_\Omega \mathrm{R}_q (P_1(x)P_0(x)^{-1} - \mathbf{1}) \dd  x  & \text{if }   \mathrm{R}_q (P_1P_0^{-1} - \mathbf{1})  \in L^1(\Omega),
 \\
 \infty & \text{otherwise,}
 \end{cases}
\end{equation}
\end{subequations}
for a   given  convex lower semicontinuous 
 functional 
  $\mathrm{R}_q: T_{\bf 1} \SLD \to [0,\infty) $ 
  fulfilling 
\begin{equation}
\label{hyp-Rq}
\mathrm{R}_q(\Sigma) = \mathrm{R}_q({-}\Sigma)
\text{ for all }  \Sigma \in  T_{\bf 1} \SLD \quad 
 \text{ and } 
\lim_{\Sigma \to 0} \frac{ \mathrm{R}_q(\Sigma)}{|\Sigma|^q}  = C_q  \in (0,\infty) \text{ for some } q>1\,.
\end{equation}
\end{compactenum}
\par
For our existence result of $\VE$ solutions to the rate-independent system 
$\RIS$
from
\eqref{setup-plast}, 
 like for  the damage system in Sec.\ \ref{s:3} we  shall  strengthen the condition $r>1$
  to $r>d$ when addressing the non-trivial viscous correction \eqref{delta-dir}.
\begin{theorem}
\label{thm:VEplast}
Assume  \eqref{hyp-e-plast}, \eqref{hyp-R-plast},   and \eqref{data-plast}.  Furthermore, 
if the viscous correction 
$\corrn_{\spz}$ is given by \eqref{delta-dir}, suppose in addition that $r>d$. 
%
Then, for every $P_0 \in  \spz $ there exists a $\VE$ solution $(y,P)$
of the rate-independent finite-plasticity system $\RIS$ \eqref{setup-plast},
with  the viscous correction 
$\corrn_{\spz}$  from  \eqref{delta-plast}, such that 
 $P(0)=P_0$ and 
\begin{equation}
\label{summ-props-yP}
y\in L^\infty(0,T;W^{1,q_Y}(\Omega;\R^d)),  \quad  P \in 
L^\infty(0,T;W^{1,r}(\Omega;\R^{d\times d})) \cap \BV ([0,T];L^1(\Omega;\R^{d\times d}))\,.
\end{equation}
\end{theorem}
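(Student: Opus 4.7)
The plan is to apply the abstract existence theorem, Theorem \ref{thm:exists-VE}, to the rate-independent system $\RIS$ from \eqref{setup-plast}, so the job reduces to verifying Assumptions $<T>$, $<A>$, $<B>$, $<C>$. The topological condition $<T>$ is immediate: $\spu$ and $\spz$ are closed convex subsets of separable reflexive Banach spaces endowed with the weak topology, hence Souslin; and $\mdsn{\spz}$ separates points of $\spz$ because the lower bound $\mathrm{R}(\Sigma)\ge c_R |\Sigma|$ from \eqref{hyp-R-plast} forces $P_0(x) = P_1(x)$ a.e.\ whenever $\mds{\spz}{P_0}{P_1}=0$.

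For Assumption $<A>$, I would essentially recycle the analysis carried out in \cite{MaiMi08} for Energetic solutions. Polyconvexity \eqref{w2} together with the coercivity \eqref{w1}, \eqref{hyp-e-plast}, the compatibility \eqref{integrab-powers} (which guarantees that $\nabla y\, P^{-1}$ is bounded in $L^{q_Y}$ along sublevels of $\calF$), and the regularity \eqref{conds-g} of $\gdir$ yield $\sigma$-lower semicontinuity, hence $<A.1>$, and weak compactness of sublevels, hence $<A.2>$. The multiplicative stress control \eqref{w3}, combined with \eqref{conds-g}, \eqref{hyp-ell}, and the identity $\partial_t \mathcal{E}_2(t,y,P) = \int_\Omega \mathrm{D}_F W \colon \partial_t\bigl(\nabla \gdir(t,y)\nabla y\, P^{-1}\bigr)\dd x - \langle \dot\ell,\gdir\rangle - \langle\ell,\partial_t\gdir(t,y)\rangle$, produces the power-control estimate \eqref{power-control} of $<A.3>$, again as in \cite[Sec.\ 4]{MaiMi08}. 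To get the conditional upper semicontinuity \eqref{usc-power} I would verify the uniform-continuity property \eqref{unif-cont} and invoke Proposition \ref{prop:GF}, so that $<A.3'>$ is in force. The left-continuity \eqref{dist-continuity} of $\mdsn{\spz}$ along energy-bounded sequences follows from the compact embedding $W^{1,r}(\Omega)\cap L^{q_P}(\Omega)\Subset L^{s}(\Omega)$ for every $s<q_P$, combined with the smoothness of matrix inversion on $\SLD$ (which gives $P_n P^{-1}\to \mathbf{1}$ a.e.) and dominated convergence.

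Assumption $<C>$ I would check via the mutual recovery sequence condition \eqref{MRS}. The construction from \cite[Thm.\ 4.8]{MaiMi08}, which produces a recovery sequence $(P_n')_n$ with $P_n'\to P'$ strongly in $W^{1,r}\cap L^{q_P}$ and $\calR(P_n' P_n^{-1})\to \calR(P' P^{-1})$, adapts to our setting once one additionally accommodates a time sequence $t_n\to t$ (this only requires uniform continuity of $\partial_t\calE$, already granted by $<A.3'>$) and shows $\corrs{\spz}{P_n}{P_n'}\to \corrs{\spz}{P}{P'}$. For the correction \eqref{delta-plast-easy} this is automatic from continuity of $h$ and the left-continuity of $\mdsn\spz$; for \eqref{delta-dir} it follows from strong $L^{q_P}$-convergence and the pointwise estimate $\mathrm{R}_q(P_n' P_n^{-1} - \mathbf{1})\le C\bigl(|P_n'{-}P_n| + |P_n'{-}P_n|^q\bigr)$ together with equi-integrability (here the uniform $L^\infty$-bound on $P_n^{-1}$ discussed below is used).

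For Assumption $<B>$ the parts $<B.1>$ and $<B.2>$ are straightforward from the structural form \eqref{delta-plast} and the left-continuity of $\mdsn{\spz}$; the substantial point is $<B.3>$, which I will verify through the sufficient condition \eqref{will-be-checked} of Remark \ref{rmk:MS}. In case \eqref{delta-plast-easy} this is immediate from \eqref{a-funct-of-d}. In case \eqref{delta-dir}, using \eqref{hyp-Rq} and \eqref{hyp-R-plast}, and a uniform $L^\infty$-bound on $P_n^{-1}$ along stable sequences, one reduces to
\[
\frac{\corrs{\spz}{P_n}{P}}{\mds{\spz}{P_n}{P}} \;\le\; C\,\frac{\|P_n{-}P\|_{L^q(\Omega)}^q}{\|P_n{-}P\|_{L^1(\Omega)}},
\]
and then the Gagliardo--Nirenberg computation used in \eqref{GN-est}--\eqref{GN} for damage shows this quotient vanishes precisely because $r>d$. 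The main technical obstacle I anticipate is establishing the uniform $L^\infty$-bound on $P^{-1}$ along energy-bounded sequences, which enters in several of the steps above. Thanks to $\det P \equiv 1$ on $\SLD$ and the embedding $W^{1,r}(\Omega)\hookrightarrow \mathrm{C}^0(\overline\Omega)$ for $r>d$, the bound on $\|P\|_{W^{1,r}}$ produced by the energy estimate transfers to a pointwise bound on $P$ and hence, via $\det P=1$, to a bound on $P^{-1}$ in $L^\infty$; once this is secured the remaining steps assemble smoothly. Finally, the regularity \eqref{summ-props-yP} is read off the coercivity bounds together with the uniform energy estimate of Remark \ref{rmk:sth-about-VE}.
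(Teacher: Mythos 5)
Your high-level plan — reduce the statement to the abstract existence Theorem~\ref{thm:exists-VE} and verify $<T>,<A>,<B>,<C>$, exploiting polyconvexity, multiplicative stress control, the embedding $W^{1,r}\hookrightarrow \mathrm{C}^0(\overline\Omega)$ for $r>d$, the cofactor identity on $\mathrm{SL}(d)$, and Gagliardo--Nirenberg — matches the paper's proof in structure and in most details. You also correctly identify $r>d$ as the source of the uniform $\mathrm{C}^0$-control on $P$ and $P^{-1}$. For $<C>$ the paper in fact uses the simpler \emph{constant} recovery sequence $(y_n',P_n'):=(y',P')$ rather than the more elaborate construction from \cite{MaiMi08} that you invoke; both can work, but the constant choice shortens the argument considerably because left-continuity of $\mathsf{d}_\spz$ already handles the dissipation term.

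There is, however, a genuine gap in your verification of $<B.3>$. You claim that \eqref{hyp-R-plast} together with the $L^\infty$-bound on $P_n^{-1}$ reduces the quotient $\corrs{\spz}{P_n}{P}/\mds{\spz}{P_n}{P}$ to $C\,\|P_n{-}P\|_{L^q}^q/\|P_n{-}P\|_{L^1}$. The numerator estimate is fine, but the denominator estimate requires a \emph{pointwise} lower bound of the form $\calR_1(P(x)P_n(x)^{-1})\geq c\,|P(x)P_n(x)^{-1}-\mathbf{1}|$, and \eqref{hyp-R-plast} does not give this. Condition \eqref{hyp-R-plast} controls only the infinitesimal norm $\mathrm{R}$ on the Lie algebra $T_{\bf 1}\SLD$, whereas $\calR_1$ is the induced Finsler-geodesic dissipation on the group $\SLD$, defined via an infimum over paths $\Xi$. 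The estimate $\mathrm{R}(\eta)\geq c_R|\eta|$ only yields $\calR_1(\Sigma)\geq c_R\int_0^1|\dot\Xi\,\Xi^{-1}|\,\mathrm{d}s$ along competitor paths, and this cannot be compared with $|\Sigma-\mathbf 1|$ unless one controls $\sup_s|\Xi(s)|$ along (near-)optimal paths. This is exactly the content of the paper's Lemma~\ref{l:calR1} and, in particular, of estimate \eqref{needed}: a careful arc-length reparameterization of near-optimal transitions shows that on sublevel sets $\{\calR_1(\Sigma)\leq M\}$ the optimal curves remain uniformly bounded, which then delivers $\calR_1(\Sigma)\geq c_M|\Sigma-\mathbf{1}|$. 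Without this lemma (or an equivalent substitute), the lower bound on $\mds{\spz}{P_n}{P}$ that your Gagliardo--Nirenberg argument relies on is unjustified, and the verification of $<B.3>$ — and, in the same vein, the lower bound \eqref{d-from-below} — does not close.

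A smaller inaccuracy: the pointwise estimate $\mathrm{R}_q(P_n'P_n^{-1}-\mathbf{1})\leq C(|P_n'{-}P_n|+|P_n'{-}P_n|^q)$ is not available from \eqref{hyp-Rq} alone, since \eqref{hyp-Rq} only prescribes the behavior of $\mathrm{R}_q$ near the origin; what one actually uses is that $P_n'P_n^{-1}-\mathbf{1}$ lies in a fixed compact set (thanks to $W^{1,r}\hookrightarrow\mathrm{C}^0$), on which the finite convex function $\mathrm{R}_q$ is bounded and $\mathrm{R}_q(\Sigma)\leq C_M|\Sigma|^q$. This is harmless but should be stated accurately, as the uniform $\mathrm{C}^0$-bound — not a global growth of $\mathrm{R}_q$ — is what makes the argument go.
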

The \emph{proof} will be carried out in   Section \ref{ss:4.1} ahead.
\begin{remark}[Extensions]
\label{rmk:ext-plast}
\upshape
The model for finite plasticity considered in \cite{MaiMi08} is actually more general than that addressed
here, as it features a further internal variable $p\in \R^m$, $m\geq 1$, besides the plastic tensor $P$.
The vector $p$ possibly encompasses hardening variables/slip strains and, like $P$, it is subject to a gradient
regularization. Under the very same conditions as in \cite[Thm.\ 3.1]{MaiMi08}, it is possible to
show that the energy functional comprising $p$ 
complies with condition $<A>$ in the metric topological setup where
\[
Z= \left( L^{q_P}(\Omega;\R^{d\times d}){\cap} W^{1, r}(\Omega; ;\R^{d\times d}) \right) \times 
\left( L^{q_P}(\Omega;\R^{m}){\cap} W^{1, r}(\Omega; ;\R^{m}) \right)\,.
\]
A typical example where the 
additional variable $p$ comes into play is \emph{isotropic hardening}, cf.\ \cite[Example 3.3]{MaiMi08}.
There, the scalar $p\in\R$ measures the amount of hardening and the variables  $(P,p)$ are subject to some constraint. 
 The relevant dissipation distance 
 accounts for such constraint and takes $\infty$ as a 
value.
\par
Actually, our analysis could be extended to dissipation distances with values in $[0,\infty]$ under the very same conditions enucleated in 
\cite[formula (3.4)]{MaiMi08}. In particular, if we take the `trivial' viscous correction $\delta_\spz$ from
\eqref{delta-plast-easy}, then 
 the same argument as in \cite[Sec.\ 5.3]{MaiMi08}
allows us to check condition \eqref{MRS},
 whence the validity of assumption  $<C>$ of the general existence Thm.\ \ref{thm:exists-VE}.
With the viscous correction in 
\eqref{delta-plast-easy}
we can generalize our existence Thm.\ \ref{thm:VEplast}  for $\VE$ solutions also in the other directions outlined in 
 \cite[Sec.\ 6]{MaiMi08}.
\end{remark}
\begin{remark}[$\VE$ solutions are in between $\ENE$ and $\BV$ solutions (II)]
\label{rmk:intermediate-plast}
\upshape
The statement of Thm.\ \ref{thm:VEplast}, as well as Remark \ref{rmk:ext-plast}, highlight the fact that, in the 
case of the viscous correction \eqref{delta-plast-easy},
the existence theory for $\VE$ solutions to the finite-strain plasticity system
works under the very same conditions as for $\ENE$ solutions.
Nonetheless, when bringing into play a different viscous correction such as that in 
\eqref{delta-dir},  like for the damage system in Sec.\ \ref{s:3} we need to strengthen our conditions on the gradient regularization
and in fact impose $r>d.$ For $\ENE$ solutions to the finite plasticity system, this requirement was made only in the cases in which the dissipation distance
took values in $[0,\infty]$, cf.\ \cite{MaiMi08}. Instead, 
in the case of the viscous correction from \eqref{delta-dir}
we cannot weaken this condition even when $\mathsf{d}_\spz$ 
is valued in $[0,\infty)$, cf.\ also Remark \ref{rmk:cannot} ahead. 
\par
At any rate, the existence of $\VE$ solutions is proved here under weaker conditions than for $\BV$ solutions. 
Although the latter have not yet been addressed in the  context of finite plasticity, we may observe that a prerequisite
for tackling them is the existence of solutions to the corresponding viscously regularized problem,
which has been recently done in  \cite{MRS2018}.  
 Such  viscous solutions have to fulfill an energy-dissipation balance that,
 in turn, relies on the validity of a suitable chain rule for the driving energy. Actually this chain rule is at the very core  of the existence argument.
In \cite{MRS2018} it has been possible to prove this condition, and to ultimately conclude the existence of solutions to the viscoplastic 
finite-strain system, only for a considerably regularized version of the energy functional $\calE$ from \eqref{en-plast}.
\end{remark}

\subsection{Proof of Theorem \ref{thm:VEplast}}
\label{ss:4.1}
Preliminarily, we collect 
 the properties of $\mathcal{R}_1$ in the following result.
\begin{lemma}
\label{l:calR1}
Assume \eqref{hyp-R-plast}. Then, the functional $\calR_1 : \SLD \to [0,\infty) $ is continuous,  strictly positive for $\Sigma \neq \mathbf{1}$,  satisfies the triangle inequality
$
\mathcal{R}_1(\Sigma_1\Sigma_0) \leq \mathcal{R}_1(\Sigma_0) +  \mathcal{R}_1(\Sigma_1)
$  for all $\Sigma_0,\, \Sigma_1 \in  T_{\bf 1} \SLD $, as well as  the estimate
\begin{equation}
\label{bounded-integrand}
\exists\, C_1>0\  \ \exists\, q_{\gamma} \in [1, q_P)   \ \  \forall\, \Sigma_0,\, \Sigma_1\in \SLD\, : \qquad 
\mathcal{R}_1(\Sigma_1\Sigma_0^{-1}) \leq C_1 (1{+} |\Sigma_0|^{q_\gamma}{+}|\Sigma_1|^{q_\gamma})\,.
\end{equation}
Moreover,
\begin{equation}
\label{needed}
\forall\, M>0 \ \exists\, c_M>0 \ \forall \, \Sigma\in \SLD \,: \qquad 
\mathcal{R}_1(\Sigma) \leq M \ \Rightarrow \ \mathcal{R}_1(\Sigma)\geq c_M | \Sigma - \mathbf{1}|\,.
\end{equation}
\end{lemma}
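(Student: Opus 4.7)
The plan is to prove the five claims in the natural order: triangle inequality first, then the polynomial upper bound (which is the main obstacle), and from these derive continuity, the local bilipschitz estimate \eqref{needed}, and strict positivity as an immediate corollary.

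\textbf{Triangle inequality by concatenation.} Given $\eta>0$ and admissible paths $\Xi_0,\Xi_1\in\rmC^1([0,1];\SLD)$ from $\mathbf{1}$ to $\Sigma_0,\Sigma_1$ whose $\mathrm{R}$-cost is within $\eta$ of $\mathcal{R}_1(\Sigma_0)$ and $\mathcal{R}_1(\Sigma_1)$ respectively, I would consider the concatenation $\Xi(s)=\Xi_0(2s)$ on $[0,\tfrac12]$ and $\Xi(s)=\Xi_1(2s{-}1)\Sigma_0$ on $[\tfrac12,1]$, which connects $\mathbf{1}$ to $\Sigma_1\Sigma_0$. On the second piece the right-translation identity $\dot\Xi(s)\Xi(s)^{-1}=2\dot\Xi_1(2s{-}1)\Sigma_0\Sigma_0^{-1}\Xi_1(2s{-}1)^{-1}=2\dot\Xi_1(2s{-}1)\Xi_1(2s{-}1)^{-1}$ holds, and the $1$-homogeneity of $\mathrm{R}$ together with the change of variable shows the total cost equals $\mathcal{R}_1(\Sigma_0)+\mathcal{R}_1(\Sigma_1)$ up to $2\eta$. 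Letting $\eta\downarrow0$ yields the triangle inequality.

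\textbf{Polynomial upper bound via polar decomposition.} This is the main obstacle, because the matrix logarithm is not globally defined on $\SLD$, so a cheap path cannot be produced by a single exponentiation. For $\Sigma$ close to $\mathbf{1}$, the path $\Xi(s)=\exp(sA)$ with $A=\log\Sigma\in\mathfrak{sl}(d)$ gives $\mathcal{R}_1(\Sigma)\leq C_R|A|$. To reach an arbitrary $\Sigma\in\SLD$ I would use the polar decomposition $\Sigma=RU$ with $R\in\SOD$ and $U$ symmetric positive definite with $\det U=1$, and handle the two factors via the triangle inequality. The rotational factor lies in the compact connected group $\SOD$, hence $\mathcal{R}_1(R)\leq C$ independent of $\Sigma$. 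For the symmetric factor, writing $U=\exp S$ with $S$ symmetric and traceless, the determinantal bound $|U^{-1}|\leq C|U|^{d-1}$, which follows from $\det U=1$ and the cofactor formula, yields the spectral estimate $|S|\leq C(1+\log|U|)$ on the eigenvalues, so $\mathcal{R}_1(U)\leq C(1+\log|\Sigma|)$ since $|U|=|\Sigma|$. Applying the triangle inequality to $\Sigma_1\Sigma_0^{-1}$ then gives a logarithmic, hence sub-polynomial, upper bound, so \eqref{bounded-integrand} holds for \emph{any} exponent $q_\gamma\in[1,q_P)$.

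\textbf{Continuity, local bilipschitz, and positivity.} Continuity at any $\Sigma\in\SLD$ follows from the triangle inequality and the exponential estimate: for $\Sigma'\to\Sigma$, one has $|\mathcal{R}_1(\Sigma'){-}\mathcal{R}_1(\Sigma)|\leq\max\{\mathcal{R}_1(\Sigma'\Sigma^{-1}),\mathcal{R}_1(\Sigma\Sigma'^{-1})\}\to 0$ since $\Sigma'\Sigma^{-1},\Sigma\Sigma'^{-1}\to\mathbf{1}$. For \eqref{needed} I would observe that any admissible path with $\mathrm{R}$-cost at most $M$ stays in a bounded region: from $\dot\Xi=(\dot\Xi\Xi^{-1})\Xi$ and the lower bound in \eqref{hyp-R-plast}, Gronwall's lemma gives $|\Xi(s)|\leq \mathrm{e}^{M/c_R}$ along such a path, and then
\begin{equation*}
|\Sigma-\mathbf{1}|\leq\int_0^1|\dot\Xi(s)|\dd s\leq \mathrm{e}^{M/c_R}\int_0^1|\dot\Xi(s)\Xi(s)^{-1}|\dd s\leq\frac{\mathrm{e}^{M/c_R}}{c_R}\int_0^1\mathrm{R}(\dot\Xi(s)\Xi(s)^{-1})\dd s.
\end{equation*}
Passing to the infimum over paths yields \eqref{needed} with $c_M=c_R\,\mathrm{e}^{-M/c_R}$; strict positivity at $\Sigma\neq\mathbf{1}$ is then automatic, as $\mathcal{R}_1(\Sigma)=0$ would force $|\Sigma-\mathbf{1}|=0$.
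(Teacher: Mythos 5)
Your argument is correct, and it is in fact more self-contained than the paper's: the paper proves only \eqref{needed} and delegates continuity, positivity, the triangle inequality and \eqref{bounded-integrand} to [Mainik--Mielke, Sec.~3], whereas you supply all of them. For \eqref{needed} the underlying idea is the same in both proofs (bound the path in $L^\infty$, then dominate $|\Sigma-\mathbf{1}|$ by the Euclidean length, which is in turn dominated by the $\mathrm{R}$-cost via \eqref{hyp-R-plast}), but the routes differ: the paper first performs an arc-length-type reparameterization of a near-infimizing sequence so that $\dot{\Xi}_n\Xi_n^{-1}$ becomes bounded in $L^\infty$, and only then applies Gronwall; you apply Gronwall directly to $|\Xi(s)|\le|\mathbf{1}|+\int_0^s|\dot\Xi\Xi^{-1}||\Xi|\,\mathrm{d}r$ using only the $L^1$ bound $\int_0^1|\dot\Xi\Xi^{-1}|\le M/c_R$, which makes the reparameterization unnecessary and yields an explicit constant. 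Your treatment of \eqref{bounded-integrand} via polar decomposition $\Sigma=RU$, compactness of $\SOD$, and the spectral estimate on $\log U$ coming from $\det U=1$ actually proves the stronger logarithmic bound, from which \eqref{bounded-integrand} follows for every $q_\gamma\in[1,q_P)$; this is consistent with (and sharper than) what the cited reference provides. Two small points to tidy up: the concatenated path in the triangle-inequality step is only piecewise $\mathrm{C}^1$ at $s=\tfrac12$, so you should either reparameterize each piece to have vanishing velocity at the junction or note that the infimum over piecewise-$\mathrm{C}^1$ paths coincides with that over $\mathrm{C}^1$ paths; and in \eqref{needed} one must work with paths of cost at most $\mathcal{R}_1(\Sigma)+\eta\le M+\eta$ and let $\eta\downarrow0$ (also keeping track of $|\mathbf{1}|=\sqrt{d}$ in the Gronwall constant), which only affects $c_M$ by a harmless factor.
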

\begin{proof}
In order to check \eqref{needed} (we refer to \cite[Sec.\ 3]{MaiMi08} for the proof of  all 
the other properties of $\calR_1$), let
$\Sigma$ fulfill
$\mathcal{R}_1(\Sigma) \leq M$: we  choose an infimizing sequence $(\Xi_n)_n \subset \mathrm{C}^1 ([0,1];G)$ such that $\Xi_n(0)=\mathbf{1}$ and $\Xi_n(1) = \Sigma$, fulfilling 
$\lim_{n\to\infty} \int_0^1 \mathrm{R}_1(\dot{\Xi}_n(s)\Xi_n(s)^{-1})\dd s   = \mathcal{R}_1(\Sigma)$. 
We define 
\[
\mathsf{s}_n : [0,1]\to [0,1]  \quad \text{by } \mathsf{s}_n(t): = c_n  \int_0^t \left( 1{+}  \mathrm{R}_1(\dot{\Xi}_n  \Xi_n^{-1}) \right) \dd s,
\]
with the normalization constant 
$
c_n: = \left(1 +  \int_0^1 \mathrm{R}_1(\dot{\Xi}_n(s)\Xi_n(s)^{-1})\dd s\right)^{-1}\,,
$
and set  
\[
 \mathsf{t}_n : =   \mathsf{s}_n^{-1}, \qquad  \widetilde{\Xi}_n : = \Xi_n \circ   \mathsf{t}_n \,.
\]
Therefore,  for $n$ sufficiently big we have 
\[
\begin{aligned}
2+M \geq  1 +  \int_0^1 \mathrm{R}_1(\dot{\Xi}_n(s)\Xi_n(s)^{-1})\dd s = \frac1{c_n}   & \geq  \frac1{c_n}
\frac{ \mathrm{R}_1(\dot{\Xi}_n(\mathsf{t}_n(s))\Xi_n(\mathsf{t}_n(s))^{-1})}{(1+ \mathrm{R}_1(\dot{\Xi}_n(\mathsf{t}_n(s))\Xi_n(\mathsf{t}_n(s))^{-1}))}  \\ & = 
 \mathrm{R}_1(\dot{\widetilde{\Xi}}_n(s)\widetilde{\Xi}_n(s)^{-1}) \geq c_R |\dot{\widetilde{\Xi}}_n(s)\widetilde{\Xi}_n(s)^{-1}|\,,
 \end{aligned}
\]
for all $s\in[0,1]$, 
where the latter estimate ensues from   \eqref{hyp-R-plast}. 
Hence the function $s\mapsto \Lambda_n(s): =  \dot{\widetilde{\Xi}}_n(s)\widetilde{\Xi}_n(s)^{-1}$ is uniformly bounded in $L^\infty (0,1; \R^{d\times d})$. Writing
\[
\widetilde{\Xi}_n(s) : = \mathbf{1} + \int_0^s  \Lambda_n(r) \widetilde{\Xi}_n(r) \dd r 
\]
we conclude, via the Gronwall Lemma, that 
\[
\exists\, \tilde{c}_M>0 \ \forall\, n \in \N\, : \qquad  \|\widetilde{\Xi}_n\|_{L^\infty (0,1;\R^{d\times d})} \leq \tilde{c}_M\,.
\]
Therefore,
\[
\begin{aligned}
\mathcal{R}_1(\Sigma)  = \lim_{n\to\infty}
\int_0^1  \mathrm{R}_1(\dot{\widetilde{\Xi}}_n(s)\widetilde{\Xi}_n(s)^{-1}) \dd s   &  \geq c_R  \liminf_{n\to\infty} \int_0^1 | \dot{\widetilde{\Xi}}_n(s)\widetilde{\Xi}_n(s)^{-1} | \dd s
\\ & \geq \frac{c_R }{\tilde{c}_M}   \liminf_{n\to\infty} \int_0^1 | \dot{\widetilde{\Xi}}_n(s) | \dd s
\geq  \frac{c_R }{\tilde{c}_M}  |\Sigma -\mathbf{1}| 
\end{aligned}
\]
where we have used the estimate $|AB^{-1}|\geq \frac{|A|}{|B|}$. 
\end{proof} 
\begin{corollary}
\label{cor4.6}
Assume \eqref{hyp-R-plast} and \eqref{hyp-Rq}. Then, 
$\mathsf{d}_\spz$ from \eqref{d-plast} is a (possibly asymmetric) quasi-distance 
separating the points of $\spz$, and fulfilling 
 \begin{equation}
\label{d-need}
\begin{aligned}
&
\forall\, M>0 \ \exists\, \tilde{c}_M>0 \ \forall \, P_0,P_1\in  \spz \,: \  
\\
& \|P_0\|_{L^\infty(\Omega)}+\|P_1\|_{L^\infty(\Omega)}
\leq M  \   \Rightarrow \ \mathsf{d}_\spz(P_0,P_1) \geq \tilde{c}_M \int_\Omega| P_1(x)P_0(x)^{-1} - \mathbf{1}|\dd x \,.
\end{aligned}
\end{equation}
Furthermore, the viscous correction $\delta_Z$ from \eqref{delta-dir} is $\sigma_Z$-lower semicontinuous on $Z\times Z$. 
\end{corollary}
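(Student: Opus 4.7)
The plan is to treat the three claims separately, relying on the properties of $\calR_1$ collected in Lemma \ref{l:calR1} and on elementary compactness arguments.

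For the quasi-distance property of $\mdsn{\spz}$, I would verify the triangle inequality pointwise: given $P_0,P_1,P_2\in\spz$, the multiplicative identity $P_2(x)P_0(x)^{-1}=\bigl(P_2(x)P_1(x)^{-1}\bigr)\bigl(P_1(x)P_0(x)^{-1}\bigr)$ combined with the subadditivity of $\calR_1$ from Lemma \ref{l:calR1} yields $\calR_1\!\bigl(P_2(x)P_0(x)^{-1}\bigr)\leq \calR_1\!\bigl(P_1(x)P_0(x)^{-1}\bigr)+\calR_1\!\bigl(P_2(x)P_1(x)^{-1}\bigr)$ for a.e.\ $x\in\Omega$, and integration gives the triangle inequality for $\mdsn{\spz}$. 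Separation of points then follows from $\mds{\spz}{P_0}{P_1}=0\Rightarrow \calR_1\!\bigl(P_1(x)P_0(x)^{-1}\bigr)=0$ a.e., together with the strict positivity of $\calR_1$ off the identity.

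For the quantitative bound \eqref{d-need}, let $M>0$ and assume $\|P_0\|_{L^\infty}+\|P_1\|_{L^\infty}\leq M$. Since each $P_i(x)\in\SLD$ has unit determinant, the cofactor formula $P_0(x)^{-1}=\transp{(\cof P_0(x))}$ gives a uniform a.e.\ bound $|P_1(x)P_0(x)^{-1}|\leq C_d M^d=:M_1$. Plugging this into the growth bound \eqref{bounded-integrand} of Lemma \ref{l:calR1} yields $\calR_1\!\bigl(P_1(x)P_0(x)^{-1}\bigr)\leq C_1(1+2M_1^{q_\gamma})=:M_2$ a.e., and the conditional estimate \eqref{needed}, applied with $M_2$ in place of $M$, then delivers $\calR_1\!\bigl(P_1(x)P_0(x)^{-1}\bigr)\geq c_{M_2}\,|P_1(x)P_0(x)^{-1}-\mathbf{1}|$ pointwise a.e. Integrating over $\Omega$ and setting $\tilde c_M:=c_{M_2}$ gives \eqref{d-need}.

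For the $\sigma_\spz$-lower semicontinuity of $\corrsn{\spz}$, I would invoke the subsequence principle: assume $(P_0^n,P_1^n)\rightharpoonup(P_0,P_1)$ in $\spz\times\spz$ along a sequence realizing $\liminf_n\corrs{\spz}{P_0^n}{P_1^n}$. The Rellich--Kondrachov theorem ensures $W^{1,r}(\Omega)\Subset L^1(\Omega;\bbM^{d\times d})$ for any $r>1$, so along a further subsequence $P_i^n\to P_i$ a.e.\ in $\Omega$. Because matrix inversion restricted to $\SLD$ is the polynomial cofactor map, $P_1^n(x)P_0^n(x)^{-1}\to P_1(x)P_0(x)^{-1}$ a.e., and continuity of the real-valued convex function $\mathrm{R}_q$ on the finite-dimensional space $T_{\mathbf{1}}\SLD$ (a standard consequence of convexity and real-valuedness, ensured by \eqref{hyp-Rq}) yields a.e.\ convergence of the integrands $\mathrm{R}_q(P_1^n(P_0^n)^{-1}-\mathbf{1})\to \mathrm{R}_q(P_1 P_0^{-1}-\mathbf{1})$. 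Fatou's lemma, applicable since $\mathrm{R}_q\geq 0$, closes the argument. The main obstacle is the assembly of \eqref{d-need}, where one must carefully coordinate the multiplicative structure on $\SLD$ with the two-sided estimates of Lemma \ref{l:calR1} whose constants depend on the sublevel one lands in; the remaining steps are routine integration and Fatou manipulations.
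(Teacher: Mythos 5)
Your proposal is correct and follows essentially the same route as the paper: the cofactor formula $P_0^{-1}=\transp{\cof(P_0)}$ on $\SLD$ to control the inverses, the conditional lower bound \eqref{needed} of Lemma \ref{l:calR1} for \eqref{d-need}, and pointwise convergence of $P_1^n(P_0^n)^{-1}$ plus Fatou for the lower semicontinuity of $\delta_\spz$. The only cosmetic differences are that you obtain the uniform a.e.\ bound on $\calR_1(P_1P_0^{-1})$ from the growth estimate \eqref{bounded-integrand} where the paper invokes continuity of $\calR_1$ on the bounded range of $P_1P_0^{-1}$, and that you spell out the triangle inequality, which the paper leaves implicit in Lemma \ref{l:calR1}.
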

\begin{proof}
To check that $\mathsf{d}_\spz$ separates the points of $\spz$,
we observe that
\[
\mathrm{d}_\spz(P_0,P_1)=0 \ \Rightarrow \ \calR_1(P_1(x)P_0^{-1}(x)) =0 \ \foraa\, x \in \Omega \ \Rightarrow \ P_0(x) = P_1(x) \ \foraa\, x \in \Omega
\]
since $\calR_1(\Sigma) >0 $ if $\Sigma \neq \mathbf{1}$. 
\par
Let us now show how \eqref{d-need} derives from \eqref{needed}. 
From $\|P_0\|_{L^\infty}+\|P_1\|_{L^\infty}
\leq M $ it follows that 
 $\|P_0^{-1}\|_{L^\infty}+\|P_1\|_{L^\infty}
\leq \widetilde{M} $. To check this, we use that  
\begin{equation}
\label{cof-matrix}
P_{0}^{-1} =\frac1{\mathrm{det}(P_0)} \transp{ \mathrm{cof}(P_0) } =  \transp{ \mathrm{cof}(P_0)} 
\end{equation}
($\mathrm{cof}(P_0) $ denoting cofactor matrix of $P_0$),
 as $P_0 \in \SLD$.
Since $\calR_1$ is continuous,  we deduce that 
\[
\exists\,  \widetilde{M}'>0 \, :  \quad 
\sup_{x\in \Omega} \calR_1(P_1(x) P_0^{-1}(x)) \leq \widetilde{M}',
\]
so that \eqref{needed} yields $ \tilde{c}_M>0$ such that 
\[
\calR_1(P_1(x) P_0^{-1}(x)) \geq   \tilde{c}_M |P_1(x) P_0^{-1}(x) {-} \mathbf{1}| \quad \text{for almost all } x \in \Omega\,.
\]
Then, \eqref{d-need}
follows. 
\par
Finally, let $(P_i^n)_n \subset \spz$ fulfill $P_i^n \weakto P_i$ as $n\to\infty$ in $L^{q_P}(\Omega;\R^{d\times d})\cap W^{1, r}(\Omega; ;\R^{d\times d})$, for $i=0,1$. 
Therefore,
$P_i^n \to P_i$ in $L^r(\Omega; ;\R^{d\times d})\cap L^{q_P -\epsilon} (\Omega; ;\R^{d\times d})$ for every $\epsilon \in (0,q_P-1]$. This  implies that
\[
P_i^n(x) \to P_i(x),
 \text{ whence }  (P_i^n(x))^{-1} \stackrel{\eqref{cof-matrix}}{=}  \transp{ \mathrm{cof}(P_i^n(x))}   \to  \transp{ \mathrm{cof}(P_i(x))} \stackrel{\eqref{cof-matrix}}{=}
(P_i(x))^{-1}   \ \foraa\, x \in \Omega, \  i =\{0,1\},
\]
as $\mathrm{det}(P_i^n(x)) =1$ for a.a.\ $x \in \Omega$, and hence $\mathrm{det}(P_i)\equiv 1$ a.e.\ in $\Omega$.
All in all, we conclude that 
\begin{equation}
\label{ptw-cvg-matrices}
P_1^n(x)(P_0^n(x))^{-1} \to P_1(x) (P_0(x))^{-1} \qquad \foraa\, x \in \Omega\,.
\end{equation}
Therefore, if $\liminf_{n\to\infty} \delta_{\spz}(P_0^n,P_1^n)<\infty$, we easily conclude that $\delta_{\spz}(P_0,P_1)<\infty$ and 
\[
\lim_{n\to\infty} \delta_{\spz}(P_0^n,P_1^n) = \liminf_{n\to\infty} \int_{\Omega} \mathrm{R}_q(P_1^n(x) (P_0^n(x))^{-1} {-} \mathbf{1}) \dd x \geq   \int_{\Omega} \mathrm{R}_q(P_1(x) (P_0(x))^{-1} {-} \mathbf{1}) \dd x  =  \delta_{\spz}(P_0,P_1),
\]
i.e.\ the claimed lower semicontinuity of $\delta_\spz$. 
\end{proof}
\par
We are now in a position to carry out the \underline{\textbf{proof of Theorem \ref{thm:VEplast}}} by verifying the validity of the conditions of Theorem \ref{thm:exists-VE}.   As we will see, the 
requirement
 $r>d$
  enters in the proof of $<B>$ \& $<C>$,  only in the case the viscous correction is given by \eqref{delta-dir}.
 \par\noindent $\vartriangleright$ \textbf{Assumption $<T>$:}  It follows from Corollary \ref{cor4.6}. 
\par\noindent $\vartriangleright$ \textbf{Assumption $<A>$:} In the proof of \cite[Thm.\ 3.1]{MaiMi08} it was shown that 
\begin{equation}
\label{coerc-plast}
\begin{aligned}
&
\exists\, C_2,\, C_3>0 \ \forall\, (t,y,P) \in [0,T]\times \spu \times \spz\, : 
\\
& \quad
 \enet tyP\geq 
C_2(\|\nabla y\|_{L^{q_Y}(\Omega)}^{q_Y}{+} \|P\|_{L^{q_P}(\Omega)}^{q_P}{+} \|\nabla P\|_{L^r(\Omega)}^{r}) -C_3\,.
\end{aligned}
\end{equation}
In view of Korn's inequality, this yields that the 
 sublevels of $\calE(t,\cdot,\cdot) $ are bounded in  the space
$V: = W^{1,q_Y}(\Omega;\R^d) \times W^{1,r}(\Omega;\R^{d\times d})$, 
uniformly w.r.t.\ $t\in[0,T]$, i.e.
\begin{equation}
\label{bounded-sublevels-finite-plast}
\forall\, S>0 \ \exists\, R_S>0 \ \forall\, (t,y,P) \in [0,T]\times \spu \times \spz\, : \quad |\enet tyP|\leq S \ \Rightarrow  \ (y,P) \in \overline{B}_{R_S}^V
\end{equation}
(cf.\ Notation \ref{gen-not}). 
We will now show that
\begin{equation}
\label{sigma-lower-semicontinuity}
\begin{aligned}
&
\Big( t_n\to t \text{ in } [0,T], \ y_n\weakto y \text{ in }  W^{1,q_Y}(\Omega;\R^d), \ P_n \weakto P \text{ in } W^{1,r}(\Omega;\R^{d\times d}) \cap L^{q_P} (\Omega;\R^{d\times d}) \Big) \\
& \   \Rightarrow  \ \ \liminf_{n\to\infty} \enet {t_n}{y_n}{P_n}\geq \enet {t}yP\,.
\end{aligned}
\end{equation}
The (sequential) lower semicontinuity of the functional $\calE_1 $  w.r.t.\ $\sigma_Z$ follows from 
\cite[Thm.\ 5.2]{MaiMi08}. We adapt the 
arguments from the latter result to show the lower semicontinuity of $\calE_2$.  First of all, since $q_Y>d$ by \eqref{integrab-powers}, from $y_n\weakto y$   in  $  W^{1,q_Y}(\Omega;\R^d)$ we deduce that $y_n\to y$ in $\rmC^0(\overline\Omega;\R^d)$. Therefore, by \eqref{conds-g} we deduce that 
\begin{equation}
\label{nablag-n}
\nabla \gdir(t_n,y_n) \to \nabla \gdir(t,y) \quad \text{ in $\rmC^0(\overline\Omega;\R^d)$.}
\end{equation}
All in all, we conclude that $ \gdir(t_n,y_n) \weakto  \gdir(t,y)
 $ in  $  W^{1,q_Y}(\Omega;\R^d)$ so that, since $\ell(t_n) \to \ell(t) $ in $  W^{1,q_Y}(\Omega;\R^d)^*$ by \eqref{hyp-ell}, we ultimately find 
 \[
  \pairing{}{W^{1,q_Y}(\Omega;\R^d)}{\ell(t_n)}{\gdir(t_n,y_n)} \to   \pairing{}{W^{1,q_Y}(\Omega;\R^d)}{\ell(t)}{\gdir(t,y)}  \quad \text{ as } n \to\infty\,.
  \]
To conclude \eqref{sigma-lower-semicontinuity},
it remains to check that 
\[
\liminf_{n\to\infty}  \int_\Omega W(x,\nabla \gdir(t_n,y_n(x)) \nabla y_n(x) P_n(x)^{-1} ) \dd x   \geq  \int_\Omega W(x,\nabla \gdir(t,y(x)) \nabla y(x) P(x)^{-1} ) \dd x \,.
\]
For this, we follow the very same arguments as in the proof of \cite[Thm.\ 5.2]{MaiMi08}, also exploiting \eqref{nablag-n}.
Clearly, \eqref{coerc-plast} and \eqref{sigma-lower-semicontinuity} ensure the validity of 
 $<A.1>$  and $<A.2>$.  
\par
It was shown in \cite[Lemma 6.1]{MRS2018} that for every $(y,P)\in \spu\times \spz$ the mapping $t\mapsto \enet tyP$ is differentiable on $[0,T]$, with 
\[
\partial_t \enet tyP  = \int_\Omega
   \inpow{x}{\gdir}{y(x)}{P(x)^{-1}} : V(t,y(x)) \dd x
   - \pairing{}{W^{1,q_Yi}}{\dot{\ell}(t)}{\gdir (t,y)} -
   \pairing{}{W^{1,q_Y}}{\ell (t)}{\dot{g}_\Dir (t,y)},
\] 
with the short-hand notation
 $\Kirchx {x}F := \rmD_F W(x,F) \transp{F}$ for the (multiplicative) Kirchhoff stress tensor, 
and  $V(t,y):= \nabla \dot{g}_\Dir (t,y) (\nabla \gdir
(t,y))^{-1} $. The power-control estimate 
\eqref{power-control} holds too, cf.\ again \cite[Lemma 6.1]{MRS2018}. 
\par
Now,  for all $\Xi \in \spz$ the functional  $\mathsf{d}_{\spz}(\cdot,\Xi)$  is left-continuous 
on $(\spz,\sigma_\spz)$ in the sense of  \eqref{dist-continuity}.
Indeed, from $P_n\weakto P$ in $W^{1,r}(\Omega;\R^{d\times d}) \cap  L^{q_P}(\Omega;\R^{d\times d})$ as $n\to\infty$ 
we have that $P_n\to P$ in  $L^{q_P-\epsilon}(\Omega;\R^{d\times d})$ for all $\epsilon \in (0,q_P-1]$.
Combining the growth condition \eqref{bounded-integrand} of $\calR_1$  and 
   the dominated convergence theorem we deduce that
\begin{equation}
\label{left-cont-dist}
\mathsf{d}_{\spz}(P_n,\Xi) = \int_{\Omega}\calR_1(\Xi(x)P_n^{-1}(x)) \dd x \to  \int_{\Omega}\calR_1(\Xi(x)P^{-1}(x)) \dd x = \mathsf{d}_{\spz}(P,\Xi) \,.
\end{equation}
   Therefore, we can check $<A.3'>$, namely the conditional upper semicontinuity \eqref{usc-power}. This has been done  in \cite[Prop.\ 4.4]{MaiMi08} by resorting to  Prop.\  \ref{prop:GF}.
\par\noindent{$\vartriangleright$ \bf  Assumption $<C>$:}  We will in fact check   \eqref{MRS}.  Let $(t_n, y_n,P_n)_n$, converging to $(t,y,P)$,  be a sequence 
as in \eqref{MRS}: with the very same arguments used for $<A.3'>$, from $\sup_{n\in \N} \enet {t_n}{y_n}{P_n} \leq C$ we deduce that
  $P_n\to P$ in  $L^{q_P-\epsilon}(\Omega;\R^{d\times d})$ for all $\epsilon \in (0,q_P-1]$.
Let us now pick \emph{any} $(y',P')\in \spu \times \spz$ with $y' \in \mathrm{Argmin}_{y\in \spu} \enet tyP$ and take the \emph{constant} recovery sequence
$
(y_n',P_n'): = (y',P')
$ for all $n\in \N$. 
Clearly, 
$\lim_{n\to\infty} \enet t{y_n'}{P_n'} = \enet t{y'}{P'}$, which entails
$\limsup_{n\to\infty} \red t{P_n'} \leq \red t{P'}$ for the reduced energy.
 Arguing as in the above lines, we also find $\mathsf{d}_{\spz}(P_n,P_n') = \mathsf{d}_{\spz}(P_n,P')\to     \mathsf{d}_{\spz}(P,P')$ as $n\to\infty$,
  which concludes the proof of \eqref{MRS} in the case the viscous correction $\delta_{\spz}$ is the `trivial' one, as in  \eqref{delta-plast-easy}. 
\par
When $\delta_{\spz}$ is instead given by \eqref{delta-dir},
we rely on 
 the compact embedding $W^{1,r}(\Omega;\R^{d\times d}) \Subset \mathrm{C}^0(\overline\Omega;\R^{d\times d})$ due to  $r>d$. 
 This guarantees that  the sequence 
 $(P_n)_n$, bounded in $W^{1,r}(\Omega;\R^{d\times d})$, in fact fulfills 
  $P_n\to P$ in  $\mathrm{C}^{0}(\overline\Omega;\R^{d\times d})$.
Therefore, $\transp{ \mathrm{cof}(P_n)}\to \transp{ \mathrm{cof}(P)}$ in   $\mathrm{C}^{0}(\overline\Omega;\R^{d\times d})$ and thus we find
\begin{equation}
\label{PnP}
P_n' P_n^{-1} = P' \transp{\mathrm{cof}(P_n)}\to P'\transp{\mathrm{cof}(P)} = P'P^{-1}  \quad \text{in }  \mathrm{C}^{0}(\overline\Omega;\R^{d\times d})
\end{equation}
(here we have again used that $P_n^{-1}=\transp{\mathrm{cof}(P_n)}$, and analogously for $P'$,  in view of  \eqref{cof-matrix} and of the fact that   $\mathrm{det}(P') =  \mathrm{det}(P_n) =1$ for every $n\in \N$).
Thus, by the continuity of $\mathrm{R}_q$ we have that $\sup_{x\in\Omega}\mathrm{R}_q (P'(x)P_n(x)^{-1}-\mathbf{1}) \leq C$. 
With the dominated convergence theorem we then infer that 
$\delta_{\spz}(P_n,P_n')  \to \delta_{\spz}(P,P')$, which establishes the validity of   \eqref{MRS}. 
\par\noindent{$\vartriangleright$ \bf  Assumption $<B>$:}  
Since $\mathrm{R}_q(0)=0$ by \eqref{hyp-Rq}, we easily check 
that 
the viscous correction $\delta_\spz$ from \eqref{delta-dir} complies with $<B.1>$. Condition 
$<B.2>$ follows from the 
very same arguments as in the above lines.
We will prove $<B.3>$ through \eqref{will-be-checked}. 
Let us now consider $(t,P) \in \mathscr{S}_{\cmdn}$ and a sequence $(t_n,P_n)\widetilde{\rightharpoonup}(t,P) $, i.e.\
$(t_n,P_n)_n
\subset \mathscr{S}_{\cmdn}$, 
$t_n \uparrow t$, 
$
P_n \weakto P$ in $W^{1,r}(\Omega)$,  $\mathsf{d}_{\spz}(P_n,P)\to 0$.
Since 
$P_n\to P$ in $\mathrm{C}^0(\overline\Omega;\R^{d\times d})$,
we may use that 
\begin{equation}
\label{d-from-below}
\exists\, \bar{c}>0 \ \forall\, n \in \N\, : \ 
\sfd_\spz(P_n,P) \geq \bar{c} \| PP_n^{-1} - \mathrm{1} \|_{L^1(\Omega;\R^{d\times d})}\,
\end{equation}
thanks to \eqref{needed}. 
Moreover,  observing that, indeed, we even have 
that 
\begin{equation}
\label{6.16}
 PP_n^{-1} \to \mathbf{1} \quad \text{ in  $ \mathrm{C}^{0}(\overline\Omega;\R^{d\times d}) $}
 \end{equation}
  (cf.\ \eqref{PnP}),
in view of \eqref{hyp-Rq} we find, for $n$ sufficiently big, 
\[
\mathrm{R}_q(P(x)P_n^{-1}(x)) \leq \left( C_q{+}\frac12 \right) | P(x)P_n^{-1}(x) {-} \mathbf{1}|^q  \quad \text{for all } x \in \Omega\,. 
\]
Therefore, 
\begin{equation}
\label{delta-p-coerc}
\delta_{\spz}(P_n,P) \leq \left( C_q{+}\frac12 \right)\| P P_n^{-1} -  \mathbf{1} \|_{L^q(\Omega;\R^{d\times d})}^q\,.
\end{equation}
Ultimately, we conclude
\[
\begin{aligned}
\lim_{(t_n,P_n)\widetilde{\rightharpoonup}(t,P) } \frac{\delta_{\spz}(P_n,P)}{\sfd_\spz(P_n,P)} &  \leq  C
\lim_{(t_n,P_n)\widetilde{\rightharpoonup}(t,P)}  
 \frac{ \| P P_n^{-1} -  \mathbf{1} \|_{L^q(\Omega)}^q}{\|PP_n^{-1} - \mathrm{1}\|_{L^1(\Omega)}} 
\\
& 
\leq C  \lim_{(t_n,P_n)\widetilde{\rightharpoonup}(t,P)}
 \frac{ \| P P_n^{-1} -  \mathbf{1} \|_{W^{1,r}(\Omega)}^{\theta q}  
 {\| P P_n^{-1} -  \mathbf{1}\|_{L^{1}(\Omega)}^{(1{-}\theta) q }}}{ \| P P_n^{-1} -  \mathbf{1}\|_{L^{1}(\Omega)}}
\\
& \leq C    \lim_{(t_n,P_n)\widetilde{\rightharpoonup}(t,P)}
 \| P P_n^{-1} -  \mathbf{1}\|_{L^{1}(\Omega)}^{(1{-}\theta) q -1} =0\,.
 \end{aligned}
\]
Here we have used 
the Gagliardo-Nirenberg inequality
in the very same way as in the proof  of Thm.\ \ref{thm:VEdam},
and the previously established convergence \eqref{6.16}.
Hence, we conclude  condition  \eqref{will-be-checked}, yielding  $<B.3>$.
\par Thus, we are in  a position to apply  Thm.\ \ref{thm:exists-VE} and conclude the existence of $\VE$ solutions.
The summability properties \eqref{summ-props-yP} follows from the energy bound $\sup_{t\in (0,T)} |\enet t{y(t)}{P(t)}| \leq C$, cf.\ \eqref{sup-perto},
combined with the coercivity estimate \eqref{coerc-plast}. 
We
have thus finished the proof of Thm.\ \ref{thm:VEplast}.
\QED

\begin{remark}
\upshape
\label{rmk:cannot}
A close perusal of the proof of the validity of conditions $<B>$ 
and $<C>$, in the case  of the non-trivial viscous regularization $\delta_\spz$ from \eqref{delta-dir}, 
reveals the key role played by the condition $r>d$
(which has been for instance used in the proof of \eqref{d-from-below}).
 Unlike for the damage system
tackled in Sec.\ \ref{s:3},  it is not clear how to weaken this requirement.
\end{remark}

\section{Passing from adhesive contact to brittle delamination with Visco-Energetic solutions}
\label{s:adh2bri} 
In this section we construct $\VE$ solutions to a rate-independent system modeling brittle delamination between two elastic bodies,
by passing to the limit in the Visco-Energetic formulation of an approximating system for adhesive contact.
Besides providing the existence of $\VE$ solutions
for brittle delamination,
 Theorem \ref{th:VE-brittle} below  is, in fact, a first result on the \emph{Evolutionary Gamma-Convergence} of Visco-Energetic solutions. 
 \par
 First of all, let us briefly sketch the model. 
 We  consider delamination between two bodies $\Omega_+, \, \Omega_- \subset \R^d$, $d\in \{2,3\}$   along their common boundary.
 More precisely, throughout this section we shall suppose that 
 \begin{equation}
   \label{geometry-brittle}
    \begin{aligned}
    &
    \Omega_{\pm}, \ \  \Omega: =  \Omega_+\cup \GC \cup \Omega_- \   \text{ are Lipschitz domains}, 
    \\
    & \partial\Omega = \Gdir \cup \Gneu \text{ with }
    \begin{cases}
    \mathscr{H}^{d-1} (\partial \Omega_\pm \cap \Gdir) >0,
    \\
    \overline{\GC} \cap \overline{\Gdir} =\emptyset\,.
    \end{cases}
    \end{aligned}
    \end{equation}
 The process is modeled
with the aid of an internal
delamination variable $z : [0; T]  \to \GC$, $0\leq z\leq 1 $ on $\GC$, 
 which describes the state of the adhesive material  located on $ \GC$ during
a time interval $[0, T]$. 
  In particular, in our notation $z(x,t) = 1$,  resp.\ $z(x,t) = 0$, shall indicate that
the glue is fully intact, resp.\ broken,
  at the point $x\in \GC$ and at the process time $t\in [0,T]$.
  Within the assumption of small strains, we also consider the displacement variable
  $u:\Omega \to \R^d$.
  \emph{Brittle} delamination  is characterized by the
  \begin{equation}
  \label{brittle-constr}
  \text{\emph{brittle constraint}} \qquad z(x,t) \JUMP{u(x,t)} =0  \quad \text{ on } \GC \times (0,T),
  \end{equation}
  where $\JUMP{u}: = u^+|_{\GC} - u^-|_{\GC}   $ is the difference of the traces on $\GC$ of $u^\pm= u|_{\Omega_\pm}$. 
   This condition allows for displacement jumps only at points $x\in \GC$ where the bonding is completely broken,
i.e. $z(x,t) = 0$; at points where $ z(x,t) > 0$  it ensures $\JUMP{u(x,t)}=0$,  i.e. the continuity of the displacements.
   Therefore, \eqref{brittle-constr} distinguishes between the crack set, where the displacements may jump, and
the complementary set with active bonding, where it imposes a transmission condition on the displacements.
   \par
   The (formally written) rate-independent system for brittle delamination reads
   \begin{subequations}
   \label{RIS-brittle}
   \begin{align}
   &
      \label{RIS-brittle-1}
   -\mathrm{div}(\bbC \eps(\tilde{u})) = f  && \text{ in } \Omega \times (0,T), && && 
   \\
   & 
      \label{RIS-brittle-2}
      \tilde{u} = w_\Dir && \text{ on } \Gdir \times (0,T), && \bbC \eps(\tilde{u})|_{\Gneu} \nu = g  && \text{ on } \Gneu \times (0,T), 
      \\
      & 
      \label{RIS-brittle-3}
      \bbC \eps(\tilde{u})|_{\GC} n  +\partial_u I_{\mathsf{C}}(\tilde{u}, z)  + \partial {I}_{\mathsf{U}(x)}(\JUMP{\tilde{u}}) \ni 0   && \text{ on } \GC \times (0,T), &&  && 
         \\
         & 
      \label{RIS-brittle-4}
              \partial \mathrm{R}(x,\dot z ) + \partial_z  I_{\mathsf{C}}(\JUMP{\tilde{u}}, z)  + \partial I_{[0,1]}(z) \ni a_0 && \text{ on } \GC \times (0,T). &&  && 
   \end{align}
   \end{subequations}
   The static momentum balance \eqref{RIS-brittle-1}, where $\bbC$ is the (positive definite, symmetric) elasticity tensor and $f$ a body force, is coupled with a time-dependent Dirichlet condition on the Dirichlet portion
   $\Gdir$
    of the boundary $\partial\Omega$, with outward unit normal $\nu$ (cf.\ \eqref{geometry-brittle} below).
    On the Neumann part $\Gneu$ a surface force $g$ is assigned. 
    The evolutions of $u$ and $z$    
    are coupled by the Robin-type boundary condition  \eqref{RIS-brittle-3} on the contact surface $\GC$, where  
    $\partial_u I_{\mathsf{C}} : \R^d \rightrightarrows \R^d $ is the   (convex analysis) subdifferential
    w.r.t.\ $u$
     of the 
     indicator function of  the set
     \[
     \mathsf{C} : = \{ (v,z)\in \R^d \times \R\, : \ \JUMP{v} z =0 \},
     \]
    while
    $\partial I_{\mathsf{U}(x)}:  \R^d \rightrightarrows \R^d   $ is the subdifferential of the indicator of 
    \[
    \mathsf{U}(x) = \{ v \in \R^d\, : v \cdot n(x) \geq 0\}, \quad x \in \GC,
    \]
   with $n $ the  unit normal to $\GC$, oriented from $\Omega_+$ to $\Omega_-$. Hence, besides \eqref{brittle-constr}, we are also imposing the \emph{non-penetration} constraint $\JUMP{u} \cdot n \geq 0$ in $\Omega$ between $\Omega_+$ and $\Omega_-$. Finally, the flow rule \eqref{RIS-brittle-4}
 for the delamination parameter $z$
 involves the very same  dissipation density $\mathrm{R}$ from \eqref{diss-dam}, the subdifferential w.r.t.\ $z$ of $I_{\mathsf{C}}$, and  the coefficient $a_0$, i.e.\
 the phenomenological specific energy per area which is stored 
  by disintegrating the adhesive.
\par
From now on, we will again use the splitting 
$\tilde u= u+w_\Dir$, with $w_\Dir$
an extension of the Dirichlet datum to  the whole of $\Omega$.   In view of 
\eqref{geometry-brittle}, without loss of generality
we may assume that  this extension fulfills
\begin{equation}
\label{WLOG-brittle}
w_\Dir|_{\GC}   \equiv 0 \text{ on } \GC, \text{ so that } \JUMP{\tilde u} = \JUMP{u+w_\Dir} = \JUMP{u}.
\end{equation}
The Energetic formulation of the brittle system \eqref{RIS-brittle}
 thus involves the following:
 \begin{subequations}
 \label{brittle-setup}
  \paragraph{\bf Ambient space:} $X=\spu \times \spz$ with 
\begin{equation}
\label{sp-bri}
\spu = H_{\Gamma_\Dir}^{1}(\Omega{\setminus}\GC;\R^d): = \{ u\in H^1(\Omega{\setminus}\GC;\R^d)\, : \, u=0 \text{ on } \Gamma_{\Dir} \}, \ \ 
\spz := \{ z\in L^\infty(\GC)\, : \ 0 \leq z \leq 1 \text{ on } \GC\},
\end{equation}
endowed with the weak topology  $\sigma_\spu$ of $H^1(\Omega{\setminus}\GC;\R^d)$ and with the weak$^*$-topology $ \sigma_\spz$  of $L^\infty(\GC)$, respectively. 
\paragraph{\bf  Energy functional:}
 $\calE: [0,T]\times \Xs \to (-\infty, \infty]$ is given by 
\begin{align}
\label{en-bri}
\begin{aligned}
\enet tuz: = &  \frac12  \int_{\Omega{\setminus}\GC}  \bbC \eps(u{+}w_\Dir) :   \eps(u{+}w_\Dir)  \dd x  \\ & + \int_{\GC}\left(  I_{\mathsf{U}(x)}(\JUMP{u}) 
{+} I_{\mathsf{C}}(\JUMP{u},z) {+}I_{[0,1]}(z) {-}a_0 z  \right) \dd \mathscr{H}^{d{-}1}(x)  -   \pairing{}{H^1(\Omega{\setminus}\GC;\R^d)}{\ell(t)}{u+w_\Dir(t)},
\end{aligned}
\end{align}
where the function $\ell : [0,T]\to H^1(\Omega;\R^d)^*$ subsumes the body and surface forces $f$ and $g$. 
Observe that the domain of $\calE$ does not depend on the time variable, i.e.\
\[
\mathrm{D}(\mathcal{E}(t,\cdot))  = \{ (u,z)\in \spu \times \spz \, : 
\JUMP{u(x)} \in \mathsf{U}(x), \ z(x) \JUMP{u(x)}=0, \ 
\ z(x) \in [0,1] \ \foraa\,x \in \GC \}\, \quad \text{for all } t \in [0,T]\,.
\] 
\paragraph{\bf Dissipation distance:}
We consider the extended asymmetric quasi-distance
 $\mdsn{\spz}: \spz \times \spz \to [0,\infty]$ defined by 
\begin{equation}
\label{d-bri}
\mds{\spz}{z}{z'} : = \mathcal{R}(z'-z)  \quad  \text{ with }  \mathcal{R}:L^1(\GC) \to [0,\infty], \  \calR(\zeta) : = \int_{\GC} \mathrm{R}(x,\zeta(x)) \dd \mathscr{H}^{d-1}(x) \,
\end{equation}
and  the dissipation density $\mathrm{R}$ from \eqref{diss-dam}. 
  \end{subequations}
  Due to the highly nonconvex character of the brittle constraint  
   \eqref{brittle-constr},  
the existence of Energetic solutions to the rate-independent system $\RIS$  from \eqref{brittle-setup}
cannot be proved by  directly passing to the time-continuous limit in the associated time-incremental minimization scheme.
 Indeed, an existence result
 was obtained in 
\cite{RoScZa09QDP} 
 by passing to the limit
 in the Energetic formulation for a penalized version of system \eqref{RIS-brittle}. The resulting system
  is in fact a model for \emph{adhesive contact}.
The  relevant energy functional, in the  very same displacement and delamination variables,
is given by 
\begin{align}
\label{en-adh}
\begin{aligned}
\enetk ktuz: = &  \frac12  \int_{\Omega{\setminus}\GC}  \bbC \eps(u{+}w_\Dir) :   \eps(u{+}w_\Dir)  \dd x  \\ & + \int_{\GC}\left(  I_{\mathsf{U}(x)}(\JUMP{u}) 
{+} \tfrac{k} 2 z|\JUMP{u}|^2  {+}I_{[0,1]}(z) {-}a_0 z  \right) \dd \mathscr{H}^{d{-}1}(x)  -   \pairing{}{H^1}{\ell(t)}{u+w_\Dir(t)}, \quad k>0.
\end{aligned}
\end{align}
Note that the brittle constraint \eqref{brittle-constr} is penalized by the term $ \tfrac{k} 2 z|\JUMP{u}|^2$. 
Via the  Evolutionary Gamma-convergence theory  from \cite{MRS06},   in \cite{RoScZa09QDP}  it was shown that $\ENE$ solutions  to the 
adhesive contact system $\RISK{k}$  converge as $k\to\infty$ to $\ENE$ solutions to the brittle delamination system 
$\RIS$. 
\par
We aim to extend this approach to the existence of $\VE$ solutions of the brittle system. In fact, 
$\VE$ solutions of the  adhesive contact system were tackled in \cite[Example 4.5]{SavMin16} with the 
\paragraph{\bf Viscous correction:} $\delta_\spz : \spz \times \spz \to [0,\infty]$ of the form
\begin{equation}
\label{delta-bri}
\delta_\spz(z,z') : =  h(\mds{\spz}{z}{z'}) \quad \text{with } h \text{ as in \eqref{a-funct-of-d}},
\end{equation}
cf.\ also Remark \ref{rmk:more-gen-delta} below. 
Under the condition that 
\begin{equation}
\label{ass-ell-bri}
w_\Dir \in \rmC^1([0,T]; H^1(\Omega;\R^d))\,,
\qquad 
\ell \in \mathrm{C}^1 ([0,T]; H^1(\Omega;\R^d)^*)\,,
\end{equation}
 the existence of $\VE$ solutions $(u_k,z_k)
\in L^\infty (0,T; H^1(\Omega;\R^d)) \times (L^\infty (\GC{\times}(0,T)) {\cap} \BV([0,T];L^1(\GC)))$ 
to the adhesive contact system $\RISK{k}$ with the  viscous correction   from \eqref{delta-bri} 
was derived 
in \cite{SavMin16}
(again, observe that the summability property  $u\in  L^\infty (0,T; H^1(\Omega;\R^d))  $ derives from the 
energy bound $\sup_{t\in (0,T)} |\enetk kt{u(t)}{z(t)}| \leq C$, cf.\ \eqref{sup-perto}).
\par 
  We now address the limit passage in the $\VE$ formulation of  $\RISK{k}$ as $k\to\infty$. From now on, we will assume for simplicity that $k\in \N$.
\begin{theorem}
\label{th:VE-brittle}
Assume \eqref{geometry-brittle}, \eqref{WLOG-brittle},  and \eqref{ass-ell-bri}. \\
 Let $(u_k,z_k)_{k\in \N} \subset L^\infty (0,T; H^1(\Omega;\R^d)) \times (L^\infty (\GC{\times}(0,T)) {\cap} \BV([0,T];L^1(\GC)))$ be a sequence of $\VE$ solutions to the rate-independent systems $\RISK{k}$,  with $\delta_\spz$ from \eqref{delta-bri} and initial datum 
$z_0\in \domene z$. 
\par
Then, for any sequence $(k_j)_{j\in\N}$ with $k_j\to\infty$ as $j\to\infty$ there exist a (not relabeled) subsequence $(u_{k_j},z_{k_j})_{j\in \N}$
 and $(u,z) \in L^\infty (0,T; H^1(\Omega{\setminus}\GC;\R^d)) \times (L^\infty (\GC{\times}(0,T)) {\cap} \BV([0,T];L^1(\GC)))$ such that 
 \begin{enumerate}
 \item $z(0)=z_0$; 
 \item
 the following convergences hold as $j\to\infty$
 \begin{subequations}
 \label{cv:adh2bri}
 \begin{align}
  \label{cv:adh2bri-1}
 &   u_{k_j}(t) \weakto u(t)  && \text{ in } H^1(\Omega{\setminus}\GC;\R^d) &&    \text{for all } t \in [0,T],  
\\
&
 \label{cv:adh2bri-2}
z_{k_j}(t) \weaksto z(t)  && \text{ in } L^\infty(\GC) && \text{for all } t \in [0,T],
\end{align}
 \end{subequations}
 \item 
 $(u,z) $ is a $\VE$ solution of the 
brittle delamination system $\RIS$ \eqref{brittle-setup}, with the  viscous  correction from \eqref{delta-bri}, such that the minimality property
\eqref{MINIMALITY} holds at all $t\in [0,T]\setminus \bar{\rmJ}$, with  $ \bar{\mathrm{J}}$ a negligible subset 
of $(0,T]$.
\end{enumerate}
 Furthermore, we have the additional convergences
 as $j\to\infty$
\begin{equation}
\label{energy-cv}
\enetk {k_j}{t}{u_{k_j}(t)}{z_{k_j}(t)} \to \enet t{u(t)}{z(t)} \text{ and } \Vari {\mdsn{\spz},\vecostname}{z_{k_j}}0{t} \to \Vari {\mdsn{\spz},\vecostname}{z}0{t}
 \qquad \text{for all } t \in [0,T].
\end{equation}
\end{theorem}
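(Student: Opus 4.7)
The starting point is to pick initial pairs $(u_0^k,z_0)$ with $u_0^k \in \mathrm{Argmin}_{u\in \spu} \enetk{k}{0}{u}{z_0}$ and $\sup_k \enetk{k}{0}{u_0^k}{z_0} \le C_0$, which is possible since $z_0 \in \domene{z}$ admits at least one admissible $u_0$ with $z_0 \JUMP{u_0}=0$. Applying \eqref{sup-perto} to each adhesive $\VE$ solution then yields a uniform bound
\[
\sup_{t\in [0,T]} |\enetk{k}{t}{u_k(t)}{z_k(t)}| + \Vari{\mdsn{\spz}}{z_k}{0}{T} \le C.
\]
Coercivity of the quadratic elastic energy, combined with Korn's inequality on $\Omega \setminus \GC$ and \eqref{ass-ell-bri}, gives a uniform bound for $u_k$ in $L^\infty(0,T;H^1(\Omega\setminus \GC;\R^d))$, while the penalization term yields $\int_\GC z_k(t)|\JUMP{u_k(t)}|^2 \dd\mathcal{H}^{d-1} \le 2C/k$ for every $t$. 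A Helly-type selection in $\spbv{\sigma_\spz,\mdsn{\spz}}{0}{T}{\spz}$ extracts a subsequence $(z_{k_j})_j$ and a limit $z$ with $z(0)=z_0$ and $z_{k_j}(t)\wsigmaz z(t)$ for every $t\in [0,T]$. At each $t$ we select $u(t)$ as a measurable element of $\mathrm{Argmin}_{u\in \spu} \enet{t}{u}{z(t)}$, which coincides (outside a negligible set $\bar\rmJ$) with the weak $H^1$-limit of a subsequence of $(u_{k_j}(t))_j$ extracted via the uniform bound. The brittle constraint $z(t)\JUMP{u(t)}=0$ follows from the penalization bound and the compact trace embedding $H^1(\Omega\setminus\GC;\R^d)\hookrightarrow L^2(\GC;\R^d)$, combined with $z_{k_j}(t)\wsigmaz z(t)$ in $L^\infty(\GC)$.

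\textbf{Reduction via Proposition~\ref{prop:charact}.}
To identify $(u,z)$ as a $\VE$ solution of the brittle system it suffices to check: \emph{(a)} the $\cmdsn{\spz}$-stability \eqref{stab-VE} at all $t\in [0,T]\setminus\jump{z}$; \emph{(b)} the Energetic upper estimate \eqref{enue-EN} together with the jump conditions \eqref{jump-conditions}; and \emph{(c)} the sharper $(\mdsn{\spz},\vecostname)$-upper bound \eqref{enue-VE}. For \emph{(a)}, given a brittle test pair $(u',z') \in \mathrm{D}(\calE(t,\cdot))$, the mutual recovery sequence from \cite{RoScZa09QDP} (essentially $u'_k := u'$ and $z'_k := z'$ supported outside a shrinking neighborhood of the active crack set $\{|\JUMP{u'}|>0\}$) satisfies $\enetk{k_j}{t}{u'_k}{z'_k} \to \enet{t}{u'}{z'}$, with the penalization term vanishing in the limit; since $\delta_\spz$ from \eqref{delta-bri} is a continuous function of $\mdsn{\spz}$, the viscous correction passes to the limit as well, and stability of $(u_{k_j}(t),z_{k_j}(t))$ yields \eqref{stab-VE} for $(u(t),z(t))$. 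For \emph{(b)}, we pass to the $\liminf$ on the left-hand side of the adhesive $\VE$ balance for $(u_{k_j},z_{k_j})$, using: (i)~the $\Gamma$-$\liminf$ inequality for the adhesive elastic energies onto the brittle one; (ii)~lower semicontinuity of $\Vari{\mdsn{\spz}}{\cdot}{0}{T}$ along pointwise $\sigma_\spz$-convergence (noting $\Vari{\mdsn{\spz}}{\cdot}{0}{T} \le \Vari{\mdsn{\spz},\vecostnamep{k_j}}{\cdot}{0}{T}$); and (iii)~convergence of the power term via \eqref{power-control}, \eqref{ass-ell-bri}, and dominated convergence. The jump conditions \eqref{jump-conditions} then follow by testing \eqref{stab-VE} against $\lli{z}{t}$ and $\rli{z}{t}$ and combining with the energy inequality across the jump, in the spirit of \cite[Lemma~6.2]{SavMin16}.

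\textbf{The main obstacle: asymptotics of optimal jump transitions.}
Property \emph{(c)} is the crux of the proof. It amounts to the lower-semicontinuity-type inequality
\[
\Vari{\mdsn{\spz},\vecostname}{z}{0}{T} \le \liminf_{j\to\infty} \Vari{\mdsn{\spz},\vecostnamep{k_j}}{z_{k_j}}{0}{T},
\]
the continuous part of which is standard, while at every $t\in \jump{z}$ one needs $\vecost{t}{\lli{z}{t}}{\rli{z}{t}} \le \liminf_j \vecostp{k_j}{t}{\lli{z_{k_j}}{t}}{\rli{z_{k_j}}{t}}$. The plan is to exploit the fine structure recalled in \cite[Prop.~3.18]{SavMin16}: at each $k_j$ and each jump time, an optimal transition decomposes into sliding and pure-jump viscous pieces, the latter solving the discrete minimum problem \eqref{minimum-jump} associated with $\enetk{k_j}{t}{\cdot}{\cdot}$. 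From these, via a diagonal Helly argument on the compact sets $E_{k_j}\Subset\R$ indexing the transitions (exploiting the uniform bound on $\tcost{\VE}{t}{\teta_{k_j}}{E_{k_j}}$ and the growth of $\delta_\spz$ from \eqref{delta-bri}), we plan to extract a limit transition $\teta_z \in \mathrm{C}_{\sigma_\spz,\mdsn{\spz}}(E;\spz)$ admissible in the brittle setting and joining $\lli{z}{t}$ to $\rli{z}{t}$. The hardest technical ingredient is the passage to the limit in the residual stability function $\rstabnamek{k_j}$: since it is built from the \emph{global} infimum $\mathscr{Y}_{k_j}$ in \eqref{residual-stability-function}, the $\liminf$ estimate on transition costs requires a strong form of $\Gamma$-convergence of the reduced energies $\redk{k_j}{t}{\cdot} \to \red{t}{\cdot}$, compatible with $\cmdsn{\spz}$-perturbations, so that limits of adhesive discrete jump steps are themselves $\cmdsn{\spz}$-minimizers for the brittle reduced energy. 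This is precisely where the brittle constraint $z\JUMP{u}=0$ enters critically, ruling out spurious instantaneous transitions; the mutual recovery construction of \cite{RoScZa09QDP}, carefully adapted to be compatible with $\delta_\spz$, will be applied at the level of each discrete transition step. Once \emph{(c)} is established, the two-sided comparison of $\liminf$/$\limsup$ in the energy-dissipation balance gives \eqref{energy-cv} as a direct by-product.
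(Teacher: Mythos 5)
Your overall architecture matches the paper's: a priori bounds from \eqref{sup-perto} and Korn's inequality, Helly-type compactness, a mutual recovery sequence in the spirit of \cite{RoScZa09QDP} for closure of the stable set, Gamma-$\liminf$ for the energies, and an appeal to Proposition~\ref{prop:charact} to upgrade stability plus the $(\mdsn{\spz},\vecostname)$-upper estimate to a full $\VE$ solution (proving \eqref{enue-EN} and the jump conditions separately, as you do in item (b), is redundant once (a) and (c) are in hand, but not harmful). The genuine gap is in your item (c), the lower semicontinuity of the augmented total variation, which is precisely the heart of the matter.

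You write that at each $t\in\jump{z}$ one must show $\vecost{t}{\lli z t}{\rli z t}\le\liminf_j\vecostp{k_j}{t}{\lli{z_{k_j}}{t}}{\rli{z_{k_j}}{t}}$, and your subsequent plan (extract a limit of optimal jump transitions of the adhesive system \emph{at the time $t$}) presupposes that $t$ is also a jump point of $z_{k_j}$ and that the adhesive jump endpoints at $t$ converge to $\lli z t,\rli z t$. Neither is true in general: a jump of the limit $z$ at $t$ can be produced by continuous sliding of $z_{k_j}$ over a shrinking interval $[\alpha_{k_j},\beta_{k_j}]\ni t$, by a cluster of countably many small adhesive jumps concentrating at $t$, or by a mixture of the two. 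The paper's Lemma~\ref{prop:key-lsc-adh2bri} establishes the correct inequality $\liminf_{k}\Vari{\mdsn{\spz},\vecostname}{z_{k}}{\alpha_k}{\beta_k}\ge\vecost{t}{\lli zt}{\rli zt}$, where $\alpha_k\uparrow t$, $\beta_k\downarrow t$ are chosen so that $z_k(\alpha_k)\weaksto\lli zt$ and $z_k(\beta_k)\weaksto\rli zt$ (the finer approximation property \eqref{finer-approx}, which your compactness step does not produce). The proof requires reparameterizing and \emph{gluing together} both the continuous pieces of $z_k$ over $[\alpha_k,\beta_k]$ and the (reparameterized) optimal jump transitions of $z_k$ at its countably many jump times inside $[\alpha_k,\beta_k]$, producing a sequence of curves $\invcur{k}$ on compact sets $\mathfrak{C}_k$ to which the refined compactness result of \cite[Thm.~5.4]{SavMin16} applies. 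A diagonal Helly argument on the single sets $E_{k_j}$ attached to a putative jump of $z_{k_j}$ at $t$, as you propose, does not capture this; in particular it sees none of the cost accumulated by the sliding portions of $z_{k_j}$. Your remark that one needs a $\Gamma$-$\liminf$ estimate for the residual stability functions $\rstabnamek{k}$ is on target --- this is exactly \eqref{liminf-Rk} in Lemma~\ref{l:closure-brittle} and it enters Step~3 of the proof of Lemma~\ref{prop:key-lsc-adh2bri} --- but it is used along the glued reparameterized transitions, not along a single family of adhesive optimal jump transitions at a fixed time. The measure-theoretic upgrade from the single-jump estimate \eqref{basta-lsc} to the global inequality \eqref{ineq:key-lsc} (Proposition~\ref{prop:6.2}, via weak$^*$ convergence of the variation measures) is also absent from your outline.
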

Observe that we are able to recover the minimality property \eqref{MINIMALITY}
only almost everywhere on $(0,T)$. 
The \emph{proof}  of Thm.\ \ref{th:VE-brittle} will be carried out throughout Sec.\ \ref{ss:6.1}, also relying on a technical result, Lemma \ref{prop:key-lsc-adh2bri}
ahead,
proved in Sec.\ \ref{ss:6.2}.
  \begin{remark}
  \upshape
 \label{rmk:more-gen-delta} 
The existence of $\VE$ solutions to the adhesive contact system $\RISK{k}$ could be 
extended to the case of the `non-trivial' viscous correction
\begin{equation}
\label{nontriv-delta-bri}
\delta_{\spz}(z,z'): = \frac1q \|z'{-}z\|_{L^q(\GC)}^\gamma, \quad q, \, \gamma>1,
\end{equation}
as soon as a gradient regularizing term of the type $|\nabla z|^r$ is added to  the energy functional
$\calE_k$ (under the additional, technical condition that $\GC$ is a `flat' $(d{-}1)$-dimensional surface, so that Laplace-Beltrami operators can be avoided).
 The exponents $r,\, q,\, \gamma$ should satisfy the compatibility condition \eqref{compatibility-below}. For instance,  in the case
$\Omega \subset \R^3$, so that $\GC \subset\R^2$, with $r=2$ and $q=2$ one would have to  take $\gamma>2$. 
\par
We could perform the adhesive-to-brittle limit passage with $\delta_\spz$ from \eqref{nontriv-delta-bri} by straightforwardly adapting the arguments in the proof of 
Thm.\ \ref{th:VE-brittle}. Anyhow, we have preferred not to
do so in order to focus on the analytical difficulties related to the limit passage in the notion of $\VE$ solution.
\end{remark}

\subsection{Proof of Theorem \ref{th:VE-brittle}}
\label{ss:6.1}
Preliminarily, let us recall the $\Gamma$-convergence properties of the adhesive contact energies $(\calE_k)_k$. These properties are  at the core of  the proof of Thm.\ 
 \ref{th:VE-brittle}.
 \begin{lemma} \cite[Corollary 3.2]{RoScZa09QDP}
 \label{l:Gamma-convg-energies}
 Assume  \eqref{geometry-brittle}, 
 \eqref{WLOG-brittle},
  and \eqref{ass-ell-bri}. Then the functionals $\calE_k$ from
 \eqref{en-adh} $\Gamma$-converge as $k\to\infty$ to $\calE$ w.r.t.\ to the 
 $\sigma_\R$-topology of  $[0,T]\times H^1(\Omega{\setminus}\GC;\R^d) \times L^\infty(\GC)$
 (i.e., the weak$^*$-topology), namely there hold the
 \begin{equation}
 \label{Gamma-adh-bri}
 \begin{aligned}
 &\hspace{-0.5cm}  \hbox{$\Gamma$-$\liminf$ estimate: }
 (t_k u_k,z_k) \wsigmaR (t,u,z)  \   \Rightarrow &&  \liminf_{k\to\infty} \enetk {k}{t_k}{u_k}{z_k}\geq \enet tuz,
 \\
 &  \hspace{-0.5cm}  \text{$\Gamma$-$\limsup$ estimate: } 
 \forall\, (t,u,z) \ \exists\, (t_k,u_k,z_k)_k\, :  (t_k u_k,z_k)   \wsigmaR (t,u,z),  && \limsup_{k\to\infty} \enetk {k}{t_k}{u_k}{z_k}\leq \enet tuz\,.
 \end{aligned}
 \end{equation}
 \end{lemma}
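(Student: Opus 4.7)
The statement is standard Mosco-type $\Gamma$-convergence for a penalized constraint; the proof I have in mind proceeds by splitting $\calE_k$ into five contributions — the bulk elastic term, the unilateral indicator $I_{\mathsf{U}(x)}(\JUMP{u})$, the box indicator $I_{[0,1]}(z)$, the linear part $-a_0 z - \langle \ell(t), u{+}w_\Dir(t)\rangle$, and the penalty $\tfrac{k}{2}z|\JUMP{u}|^2$ — and showing that the first four pass to the limit in the desired direction, while the penalty drives the brittle constraint $z\JUMP{u}=0$ in the limit.

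For the $\Gamma$-$\liminf$ inequality, fix $(t_k,u_k,z_k) \wsigmaR (t,u,z)$ and assume $\liminf_k \calE_k(t_k,u_k,z_k) < \infty$ (otherwise nothing to prove). Extracting a subsequence, the sequences are bounded in the relevant norms. The bulk quadratic form $e \mapsto \tfrac12\bbC e{:}e$ is convex and continuous, hence sequentially weakly l.s.c.\ on $H^1(\Omega{\setminus}\GC;\R^d)$; the terms $\int_{\GC} I_{\mathsf{U}(x)}(\JUMP{u})\,d\mathscr{H}^{d-1}$ and $\int_{\GC} I_{[0,1]}(z)\,d\mathscr{H}^{d-1}$ are l.s.c.\ w.r.t.\ weak $H^1$- and weak$^*$-$L^\infty$-convergence because $\mathsf{U}(x)$ and $[0,1]$ are closed convex; the affine contribution passes to the limit by \eqref{ass-ell-bri}, using weak continuity of the trace operator into $L^2(\Gneu;\R^d)$. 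For the penalty, from the uniform bound $\sup_k \tfrac{k}{2}\int_{\GC} z_k|\JUMP{u_k}|^2 \, d\mathscr{H}^{d-1} \leq C$ I deduce $\int_{\GC} z_k|\JUMP{u_k}|^2\,d\mathscr{H}^{d-1} \leq 2C/k \to 0$. By compactness of the trace operator $H^1(\Omega{\setminus}\GC;\R^d) \Subset L^2(\GC;\R^d)$, the convergence $\JUMP{u_k} \to \JUMP{u}$ is strong in $L^2(\GC;\R^d)$, so $|\JUMP{u_k}|^2 \to |\JUMP{u}|^2$ strongly in $L^1(\GC)$. Coupled with $z_k \weaksto z$ in $L^\infty(\GC)$, this yields $\int_{\GC} z|\JUMP{u}|^2\,d\mathscr{H}^{d-1} \leq \liminf_k \int_{\GC} z_k|\JUMP{u_k}|^2\,d\mathscr{H}^{d-1} = 0$, so $z\JUMP{u} = 0$ a.e.\ on $\GC$, i.e.\ $I_{\mathsf{C}}(\JUMP{u},z) = 0$ a.e. Since the penalty is nonnegative, summing the five contributions gives the $\Gamma$-$\liminf$ bound.

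For the $\Gamma$-$\limsup$ inequality, given $(t,u,z)$ with $\calE(t,u,z) < \infty$, the brittle constraint $z\JUMP{u} = 0$ holds a.e.\ on $\GC$, which implies in particular $z|\JUMP{u}|^2 = 0$ a.e. Hence the \emph{constant} recovery sequence $(t_k,u_k,z_k) := (t,u,z)$ satisfies $\calE_k(t_k,u_k,z_k) = \calE(t,u,z)$ for every $k$, and taking $\limsup$ produces the required estimate with equality.

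The only nontrivial step is the handling of the penalty term in the $\Gamma$-$\liminf$: one must combine the weak$^*$-convergence of the delamination variable with the \emph{strong} $L^2(\GC)$-convergence of the jump $\JUMP{u_k}$, the latter being a consequence of the compact embedding of the trace of $H^1(\Omega{\setminus}\GC;\R^d)$ into $L^2(\GC;\R^d)$. This mixed weak-strong argument is standard but crucial, and is what forces the brittle constraint to emerge in the limit. The other ingredients (convexity, closedness of the admissible sets, continuity of the loading) are routine.
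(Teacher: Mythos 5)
Your proof is correct and follows the standard argument behind the cited result \cite[Cor.\ 3.2]{RoScZa09QDP}, which the paper itself does not reprove: the uniform bound on the penalty forces $\int_{\GC}z_k|\JUMP{u_k}|^2\dd\mathscr{H}^{d-1}\to 0$, the weak$^*$--strong pairing of $z_k$ with $|\JUMP{u_k}|^2$ (via compactness of the trace of $H^1(\Omega{\setminus}\GC;\R^d)$ into $L^2(\GC;\R^d)$) passes this to the limit and yields the brittle constraint, and the constant recovery sequence works because $z\JUMP{u}=0$ kills the penalty identically. No gaps; note only that your weak$^*$--strong argument actually gives convergence (not merely a $\liminf$ inequality) of $\int_{\GC}z_k|\JUMP{u_k}|^2\dd\mathscr{H}^{d-1}$, which is all the better.
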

 In order to pass to the limit in the $\VE$-formulation, we also need to investigate the closure,  as $k\to\infty$,  of  the stable 
 (in the Visco-Energetic sense)
 sets 
 \[
 \begin{aligned}
 \mathscr{S}_{\cmdsn{\spz}}^k: = \{ (t,u,z) \in [0,T] \times  \spu\times \spz\, : \ &  \enetk ktuz \leq \enetk kt{u'}{z'} + \mds{\spz}z{z'} + h( \mds{\spz}z{z'})
 \\
  & \quad \text{for all } (u',z') \in \spu \times \spz
  \},  \ k \in \N,
  \end{aligned}
 \]
 with $h$ as \eqref{delta-bri} (while we will denote by $\mathscr{S}_{\cmdn}$ the stable set for the brittle delamination system). 
 More precisely, we will study the 
 \emph{Kuratowski limit inferior}
 \[
 \mathrm{Li}_{k\to\infty}  \mathscr{S}_{\cmdsn{\spz}}^k: = \{ (t,u,z) \in [0,T] \times  \spu\times \spz\, : \  \exists\, (t_k,z_k,u_k) \in  
 \mathscr{S}_{\cmdsn{\spz}}^k \text{ such that } (t_k,z_k,u_k)    \wsigmaR (t,u,z) \}\,.
 \]
 Recall that, by  \eqref{propR} $ \mathscr{S}_{\cmdsn{\spz}}^k$ is the zero set of the residual stability function 
 \[
 \begin{gathered}
  \rstabtk ktz:  = \sup_{z'\in \spz} \left\{ \redk ktz - \redk kt{z'} -  \mds{\spz}z{z'} - h( \mds{\spz}z{z'})  \right\} \quad  \text{with the reduced energy  } 
  \\
  \redk ktz: = \inf_{u\in H_{\Gdir}^1(\Omega{\setminus}\GC;\R^d)} \enetk ktuz = \min_{u\in H_{\Gdir}^1(\Omega{\setminus}\GC;\R^d)} \enetk ktuz,
  \end{gathered}
  \]
  (the $\inf$ in the definition of $\rednk k $ is attained
since
  for every $(t,z)\in [0,T]\times \spz$  the functional $u\mapsto \enetk ktuz $ has sublevels bounded in 
  $H_{\Gdir}^1(\Omega{\setminus}\GC;\R^d)$ by Korn's inequality, 
  cf.\ \eqref{Korn-brittle} ahead, 
  and is lower semicontinuous w.r.t.\ $H^1$-weak convergence).
In fact,  the study of $ \mathrm{Li}_{k\to\infty}  \mathscr{S}_{\cmdsn{\spz}}^k $ is related to the $\Gamma$-$\liminf$ (w.r.t.\ $\sigma_\R$-topology) of the 
  functionals $(\rstabnamek k )_{k}$.  That is why,  we will  further obtain the $\liminf$-inequality \eqref{liminf-Rk} below. 
  Such estimate  will also play a crucial role
  for the limit passage in the Visco-Energetic energy-dissipation balance as $k\to\infty$.
  \begin{lemma}
  \label{l:closure-brittle}
   Assume  \eqref{geometry-brittle}, \eqref{WLOG-brittle},  \eqref{ass-ell-bri}.
   Then,
   \begin{equation}
\label{trivial-consequence}
 \mathrm{Li}_{k\to\infty}  \mathscr{S}_{\cmdsn{\spz}}^k \subset \mathscr{S}_{\cmdsn{\spz}}\,
\end{equation}
 and, in fact,  for every
     $(t_k,z_k)_k\subset [0,T]\times \spz$ there holds
     \begin{equation}
     \label{liminf-Rk}
      (t_k ,z_k) \wsigmaR (t,u,z)   \Rightarrow \liminf_{k\to\infty}   \rstabtk k{t_k}{z_k}  \geq \rstab tz\,.
     \end{equation}
     \end{lemma}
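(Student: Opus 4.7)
The plan is to derive \eqref{liminf-Rk} first and then deduce \eqref{trivial-consequence} as an immediate corollary. Indeed, any $(t,u,z) \in \mathrm{Li}_{k\to\infty} \mathscr{S}_{\cmdsn{\spz}}^k$ is a $\sigma_\R$-limit of points $(t_k, u_k, z_k) \in \mathscr{S}_{\cmdsn{\spz}}^k$, for which \eqref{propR} gives $\rstabtk{k}{t_k}{z_k}=0$ for every $k$; \eqref{liminf-Rk} then forces $0\geq \rstab{t}{z}\geq 0$, i.e.\ $(t,u,z)\in\mathscr{S}_{\cmdsn{\spz}}$. To prove \eqref{liminf-Rk}, I fix a test $\bar z \in \spz$ with $\rene{t}{\bar z}+\mds{\spz}{z}{\bar z}<\infty$ (other competitors contribute $-\infty$ to the supremum defining $\rstab{t}{z}$, hence can be ignored) and aim to produce a \emph{mutual recovery} $(\bar z_k)_k \subset\spz$ with $\bar z_k \wsigmaz \bar z$ satisfying
\[
  \limsup_{k\to\infty}\bigl(\redk{k}{t_k}{\bar z_k}+\mds{\spz}{z_k}{\bar z_k}+h(\mds{\spz}{z_k}{\bar z_k})\bigr)\leq \rene{t}{\bar z}+\mds{\spz}{z}{\bar z}+h(\mds{\spz}{z}{\bar z}).
\]
Combined with the base-point $\Gamma$-$\liminf$-type estimate $\liminf_k \redk{k}{t_k}{z_k}\geq \rene{t}{z}$, this yields $\liminf_k\rstabtk{k}{t_k}{z_k}\geq \rene{t}{z}-\rene{t}{\bar z}-\mds{\spz}{z}{\bar z}-h(\mds{\spz}{z}{\bar z})$, and taking the supremum over $\bar z$ completes the proof.

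The base-point $\Gamma$-$\liminf$ estimate is the easier of the two ingredients. Assuming without loss of generality that the relevant $\liminf$ is finite, I select $u_k\in\mathrm{Argmin}_{u\in\spu}\enetk{k}{t_k}{u}{z_k}$ (the minimum being attained by positive-definiteness of $\bbC$, Korn's inequality on $\Omega_\pm$, and $\sigma_U$-lower semicontinuity of $u \mapsto \enetk{k}{t_k}{u}{z_k}$). A uniform bound for $\redk{k}{t_k}{z_k}$ along a subsequence combined with coercivity then forces $\sup_k \|u_k\|_{H^1(\Omega\setminus\GC;\R^d)}<\infty$, so, along a further subsequence, $u_k\weakto u^*$ in $H_\Gdir^1(\Omega\setminus\GC;\R^d)$. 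The $\Gamma$-$\liminf$ of Lemma \ref{l:Gamma-convg-energies} then yields $\liminf_k \redk{k}{t_k}{z_k}=\liminf_k \enetk{k}{t_k}{u_k}{z_k}\geq \enet{t}{u^*}{z}\geq \rene{t}{z}$, as required.

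The main obstacle is the construction of the mutual recovery $(\bar z_k)_k$, in which two intertwined difficulties appear. First, the $\Gamma$-$\limsup$ of Lemma \ref{l:Gamma-convg-energies} supplies a recovery sequence for the full energy $\calE_k$, not for the reduced functional $\rednk{k}$; I will therefore pair $\bar z_k$ with a companion $\bar u_k$ approximating a selected minimizer $\bar u \in\mathrm{Argmin}_{u\in\spu}\enet{t}{u}{\bar z}$, so that $\redk{k}{t_k}{\bar z_k}\leq \enetk{k}{t_k}{\bar u_k}{\bar z_k}$ controls the relevant $\limsup$. Second, and more critically, weak-$\ast$ convergence $\bar z_k\weaksto \bar z$ in $L^\infty(\GC)$ does \emph{not} by itself ensure continuity of the dissipation term $\mds{\spz}{z_k}{\bar z_k}=\int_{\GC}\kappa\,|\bar z_k-z_k|\,\mathrm{d}\mathscr{H}^{d-1}$, which moreover requires the pointwise sign constraint $\bar z_k\leq z_k$ a.e.\ on $\GC$ for finiteness. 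This is exactly what Lemma \ref{prop:key-lsc-adh2bri} (the technical result proved in Section \ref{ss:6.2}) is designed to provide: the natural candidate is a truncation of the form $\bar z_k:=\min\{\bar z,z_k\}$, which automatically respects the sign constraint and, in view of the a.e.\ inequality $\bar z\leq z$ forced by $\mds{\spz}{z}{\bar z}<\infty$, converges to $\bar z$ in a sense strong enough to transfer the $L^1$-based distance to the limit; coupling this with a $\Gamma$-$\limsup$ recovery for $\bar u$ compatible with the brittle constraint $\bar z\JUMP{\bar u}=0$, and invoking Lemma \ref{prop:key-lsc-adh2bri}, closes the argument.
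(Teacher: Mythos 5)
The overall architecture is right (prove \eqref{liminf-Rk} by the base-point $\Gamma$-$\liminf$ plus a mutual recovery sequence for an arbitrary competitor $\bar z$, deduce \eqref{trivial-consequence}), and your handling of the base-point estimate via coercivity, extraction of weakly convergent minimizers, and the $\Gamma$-$\liminf$ of Lemma \ref{l:Gamma-convg-energies} is sound. But the mutual recovery sequence you propose is the wrong one for this setting, and this is the step on which the whole lemma hinges.

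The truncation $\bar z_k := \min\{\bar z, z_k\}$ is the construction used for the \emph{damage} system (cf.\ \eqref{mrs-marita}), where a gradient term $\int_\Omega|\nabla z|^r$ in the energy forces strong $L^q$-convergence of $(z_k)_k$ via compact embedding; under strong convergence, $\min$ passes to the limit. Here the internal variable $z$ lives only in $L^\infty(\GC)$ with \emph{no} gradient regularization, so all you have is $z_k \weaksto z$. The pointwise minimum does not commute with weak-$*$ limits: take $\GC = (0,1)$, $z \equiv \bar z \equiv \tfrac12$, and let $z_k$ oscillate between $0$ and $1$ with mean $\tfrac12$; then $\min\{\bar z, z_k\}$ oscillates between $0$ and $\tfrac12$ and converges weak-$*$ to $\tfrac14 \neq \bar z$. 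Worse, the dissipation $\mds{\spz}{z_k}{\bar z_k} = \int_\GC \kappa\,(z_k-\bar z)^+\,\dd\mathscr{H}^{d-1}$ then tends to $\tfrac14\int_\GC\kappa > 0 = \mds{\spz}{z}{\bar z}$, so the $\limsup$ inequality you need fails. The correct construction (borrowed from \cite[Thm.\ 3.3]{RoScZa09QDP}) is multiplicative: $z_k' := z_k\,\tfrac{\bar z}{z}$ on $\{\bar z>0\}$ and $z_k' := 0$ elsewhere. This satisfies $0 \leq z_k' \leq z_k$, preserves the brittle constraint ($z_k'\JUMP{\bar u}=0$ since $z_k'\leq\bar z$ up to a bounded factor vanishing off $\{\bar z>0\}$), and — crucially — multiplication by the \emph{fixed} bounded weight $\bar z/z$ is linear and hence commutes with weak-$*$ convergence, so $z_k' \weaksto \bar z$ and the linear dissipation $\int_\GC\kappa(z_k-z_k')$ converges.

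A secondary error: you invoke Lemma \ref{prop:key-lsc-adh2bri} as the tool supplying the recovery for the dissipation. That lemma concerns lower semicontinuity of the \emph{jump-cost} variation functionals $\Varname{\mdsn{\spz},\vecostname}$ along shrinking intervals, used in Step 3 of the proof of Theorem \ref{th:VE-brittle}; it plays no role in proving closedness of the stable sets, and in any case its proof does not construct mutual recovery sequences. Finally, note that \eqref{liminf-Rk} concerns the reduced stability function and gives $\rstab tz=0$; to conclude $(t,u,z)\in\mathscr{S}_{\cmdsn{\spz}}$ you still need $u\in\mathrm{Argmin}_{u'\in\spu}\enet t{u'}{z}$, which follows from the $\Gamma$-$\liminf$ applied to the minimality of $u_k$ but deserves a word.
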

  \begin{proof}
We start by showing \eqref{liminf-Rk}.
We use that
  \begin{subequations}
  \begin{align}
  &
  \label{def-rstabk}
   \rstabtk k{t_k}{z_k} = \sup_{z'\in \spz} \left( \redk k {t_k}{z_k} {-}   \redk k {t_k}{z'}{ -} \mds{\spz}{z_k}{z'} {-}  h(\mds{\spz}{z_k}{z'})\right),
   \\
   & 
     \label{def-rstab-bri}
      \rstabt {t}{z} = \sup_{z'\in \spz} \left( \red{t}{z} {-}   \red {t}{z'}{ -} \mds{\spz}{z}{z'} {-}  h(\mds{\spz}{z}{z'})\right)
      \end{align}
      \end{subequations}
      (with $\calI$ the reduced energy associated with $\calE$). 
In order to prove \eqref{liminf-Rk} it is therefore sufficient to exhibit, for any fixed $z'\in Z$ such that $\mds{\spz}{z}{z'}<\infty$
(i.e., $z'\leq z $ a.e.\ in $\GC$) and $\redk k{t}{z'}<\infty$, a \emph{recovery} sequence $(z_k')_k\subset \spz$
 such that 
 \begin{equation}
 \label{recovery-sequence}
 \limsup_{k\to\infty} \left(  \redk k {t_k}{z_k'}{ +} \mds{\spz}{z_k}{z_k'} {+}  h(\mds{\spz}{z_k}{z_k'}) \right)
 \leq \left(   \red{t}{z'}{ +} \mds{\spz}{z}{z'} {+}  h(\mds{\spz}{z}{z'}) \right)\,.
 \end{equation}
 Then, we will  have 
 \[
 \begin{aligned}
  \liminf_{k\to\infty}  \rstabtk k{t_k}{z_k}  &  \stackrel{\eqref{def-rstabk}}{\geq}  \liminf_{k\to\infty} \left(  \redk k{t_k}{z_k} {-}   \redk k{t_k}{z_k'}{ -} \mds{\spz}{z_k}{z_k'} {-}  h(\mds{\spz}{z_k}{z_k'}) \right) 
  \\
  &
  \stackrel{\eqref{recovery-sequence}}{\geq} \left( \red{t}{z} {-}   \red{t}{z'}{ -} \mds{\spz}{z}{z'} {-}  h(\mds{\spz}{z}{z'}) \right),
  \end{aligned}
 \]
 where we have also exploited the $\Gamma$-$\liminf$-estimate in \eqref{Gamma-adh-bri}. 
Then, \eqref{liminf-Rk} shall follow from the arbitrariness of $z'$.
 We borrow the definition of the sequence $(z_k')_k$ from the proof of \cite[Thm.\ 3.3]{RoScZa09QDP}, letting 
 \begin{equation}
 \label{def-rec-seq}
 z_k': = \begin{cases}
 z_k \frac{\displaystyle z'}{\displaystyle z} & \text{ if } z'>0,
 \\
 0 & \text{ otherwise}.
 \end{cases}
 \end{equation}
 Since $z'\leq z $ a.e.\ in $\GC$, it is immediate to verify that $0\leq z_k'\leq z_k \leq 1 $ a.e.\ in $\GC$.
 Furthermore, $z_k\weaksto z$ in $L^\infty(\GC) $ gives $z_k' \weaksto z'$ in $L^\infty(\GC) $.
 Therefore,
 \begin{equation}
 \label{recovery-dissips}
 \begin{gathered}
 \lim_{k\to\infty}  \mds{\spz}{z_k}{z_k'}  =   \lim_{k\to\infty}\int_{\GC} \kappa (z_k(x){-}z_k'(x) ) \dd \mathscr{H}^{d-1}(x) 
 =  \int_{\GC} \kappa (z(x){-}z'(x) ) \dd \mathscr{H}^{d-1}(x)  =   \mds{\spz}{z}{z'} 
 \\
\text{whence }  \lim_{k\to\infty} h( \mds{\spz}{z_k}{z_k'}) = h( \mds{\spz}{z}{z'}), 
 \end{gathered}
 \end{equation}
too.
 Let us now consider the (unique) minimizer $u' \in \spu$ for $\enet t{\cdot}{z'}$. We have 
  \begin{equation}
 \label{recovery-energies}
 \limsup_{k\to\infty}   \redk k {t_k}{z_k'} \leq  \limsup_{k\to\infty}   \enetk k {t_k}{u'}{z_k'} =\lim_{k\to\infty}   \enetk k {t_k}{u'}{z_k'} 
 \stackrel{(1)}{=}    \enet {t}{u'}{z'} = \red t{z'}.
 \end{equation}
Indeed, for
{\footnotesize (1)} we have used the fact that 
$z_k' \JUMP{u'} =0$ a.e.\ in $\GC$, which follows from 
$z'\JUMP{u'}=0$ and from the definition \eqref{def-rec-seq} of $z_k'$. 
 From \eqref{recovery-dissips} and \eqref{recovery-energies} we clearly conclude
  \eqref{recovery-sequence}, whence \eqref{liminf-Rk}.
  \par
  In order to show that every element $(t,u,z)$ in  $\mathrm{Li}_{k\to\infty}  \mathscr{S}_{\cmdsn{\spz}}^k $ fulfills the
  $\cmdsn{\spz}$-stability condition with the brittle energy functional,   for every $(u',z')$ we need to exhibit a recovery sequence $(u_k',z_k')_k $ such that 
  \[
 \limsup_{k\to\infty} \left(  \enetk k {t_k}{u_k'}{z_k'}{ +} \mds{\spz}{z_k}{z_k'} {+}  h(\mds{\spz}{z_k}{z_k'}) \right)
 \leq \left(   \enet {t}{u'}{z'}{ +} \mds{\spz}{z}{z'} {+}  h(\mds{\spz}{z}{z'}) \right)\,.
 \]
  The  sequence $(u_k',z_k')_k: = (u',z_k')_k$ with $(z_k')_k$ from  \eqref{def-rec-seq},
  does the job. 
    This finishes the proof.
  \end{proof}
\par
The \underline{\bf proof of Thm.\     \ref{th:VE-brittle}} will be carried out in the following steps:
\begin{compactenum}
\item First of all,
 we will show that the sequence  $(z_{k_j})_{j\in \N}$ of $\VE$ solutions in the  statement of the theorem  does admit a subsequence converging in the sense  of \eqref{cv:adh2bri-2} to $z$;
 \item Secondly, 
 we will prove that $z$ complies with the stability condition \eqref{stab-VE} for the brittle system 
 $\RIS$  from  \eqref{brittle-setup} and, as a byproduct, obtain  convergence
\eqref{cv:adh2bri-1}  for $(u_{k_j})_{j\in \N}$ ;
 \item Thirdly, we  will show that $(u,z)$  fulfills the upper energy-dissipation estimate  \eqref{enue-VE} for the brittle system also relying on Proposition \ref{prop:6.2} ahead;
 \item We shall  thus conclude that $(u,z) $ is a $\VE$ solution to  the brittle system $\RIS$    \eqref{brittle-setup}. 
\end{compactenum}
\par\noindent
 $\vartriangleright$ \textbf{Step $1$:}  Since the constant $C_0$ in \eqref{sup-perto} only depends on the initial data $(u_0,z_0)$,
  which in turn do not depend on ${k_j}$, for the  $\VE$ solutions $(u_{k_j},z_{k_j})_j$ to the adhesive contact system the following bounds are valid 
\[
\exists\, C>0 \ \forall\, j \in \N \ \forall\, t \in [0,T]\, : \quad \sup_{t\in [0,T]} |\enetk {k_j} t{u_{k_j}(t)}{z_{k_j}(t)}| + \Vari {\mdsn{\spz}}{z_{k_j}}0{T}\leq   C\,.
\]
In turn, it follows from the positive definiteness of $\bbC$, Korn's inequality and from \eqref{ass-ell-bri} that 
\begin{equation}
\label{Korn-brittle}
\exists\, c_1,\, c_2>0  \ \  \forall\, (t,u,z)\in [0,T]\times \spu \times \spz \, : \qquad \enetk ktuz \geq c_1 \| u\|_{H^1(\Omega{\setminus}\GC)}^2 -c_2\,.
\end{equation}
We ultimately conclude that the sequences $(u_{k_j})_j$ and $(z_{k_j})_j$ are bounded in 
$L^\infty (0,T; H_\Dir^1(\Omega{\setminus}\GC;\R^d)) $ and in $L^\infty(\GC{\times}(0,T)) \cap \BV([0,T];L^1(\GC))$, respectively.
An infinite-dimensional version of Helly's compactness theorem (cf., e.g., \cite[Thm.\ 3.2]{MaiMie05EREM}) yields that, up
 to a not relabeled subsequence, convergence \eqref{cv:adh2bri-2} 
 for $(z_{k_j})_j$  holds. As for $(u_{k_j})_j$, 
for every $t\in (0,T]$ there exist a subsequence $(k_j^t)$, possibly depending on $t$, 
 and $\tilde u(t) \in H_\Dir^1(\Omega{\setminus}\GC;\R^d)$ such that 
\begin{equation}
\label{conv-kj-t}
u_{k_j^t}(t)\weakto \tilde{u}(t) \quad \text{ in $ H_\Dir^1(\Omega{\setminus}\GC;\R^d).$}
\end{equation}
\par
Furthermore, mimicking the arguments in the proof of 
\cite[Thm.\ 7.2]{SavMin16}, we also find a finer approximation property at every $t$ in the jump set $  \mathrm{J}_z$ of $z$, namely
\begin{equation}
\label{finer-approx}
\forall\, t\in  \mathrm{J}_z \cap (0,T)  \ \exists\, (\alpha_{k_j})_j,\, (\beta_{k_j})_j \subset [0,T] \text{ such that }
\begin{cases}
 \alpha_{k_j}\uparrow t \text{ and } z_{k_j}(\alpha_{k_j}) \weaksto \lli zt \text{ in } L^\infty(\GC),
 \\
 \beta_{k_j}\downarrow t \text{ and } z_{k_j}(\beta_{k_j}) \weaksto \rli zt \text{ in } L^\infty(\GC),
 \end{cases}
\end{equation}
with  obvious modifications at $t\in \mathrm{J}_z \cap \{ 0,T\}$. 
\par\noindent
 $\vartriangleright$ \textbf{Step $2$:}
Let us introduce
 the $\limsup$ of the jump sets  $(\mathrm{J}_{z_{k_j}})_j$, i.e.\
  $\tilde{\mathrm{J}}: = \cap_{m\in N} \cup_{j\geq m} \mathrm{J}_{z_{k_j}} $. Observe that
for every $t\in [0,T]\setminus \tilde{\mathrm{J}}$ there exists $m_t\in \N$ such that for every $j\geq m_t$ we have $t\in [0,T]\setminus \mathrm{J}_{z_{k_j}}$. Therefore,
up to taking a bigger $m_t$ if necessary,
we have 
$
(t, u_{k_j^t}(t), z_{k_j^t}(t)) \in    \mathscr{S}_{\cmdn}^{k_j^t} $
for all $ j \geq m_t.$
By virtue of \eqref{trivial-consequence}, we conclude that 
\begin{equation}
\label{partial-stability}
(t,\tilde{u}(t), z(t)) \in \mathscr{S}_{\cmdn} \quad \text{for all }  t \in [0,T]\setminus \tilde{\mathrm{J}}.
\end{equation}
From \eqref{partial-stability} we gather, in particular, that $\tilde{u}(t) \in \mathrm{Argmin}_{u' \in \spu} \enet t{u'}{z(t)}$. Since the latter set is a singleton by Korn's inequality, we ultimately find that  $\tilde{u}(t)$ is uniquely determined. Therefore,
convergence \eqref{conv-kj-t} 
 holds at every  $t\in [0,T]\setminus \tilde{\mathrm{J}}$
 along the \emph{whole} sequence $(k_j)_j$. This shows   \eqref{cv:adh2bri-1} at all   
 $t\in [0,T]\setminus \tilde{\mathrm{J}}$.
 \par
 Finally, we conclude the validity of \eqref{cv:adh2bri-1}  at \emph{every} $t \in [0,T]$ by observing that,
 at every $t$ in   the \emph{countable} set 
 $\tilde{\mathrm{J}}$ we can extract a subsequence  of 
 $(k_j)_j$
 such that 
 \eqref{conv-kj-t}. With a diagonal procedure we thus construct a subsequence fitting all $t\in \tilde{\mathrm{J}}$ and \eqref{cv:adh2bri-1}  follows. 
 \par
 We now show that 
 \begin{equation}
 \label{stab-at-jumps}
 \lli zt, \, \rli zt \in  \mathscr{S}_{\cmdn}(t) \quad \text{for all } t \in (0,T),  \ \ \rli z0 \in  \mathscr{S}_{\cmdn}(0), \ \
 \lli zT \in  \mathscr{S}_{\cmdn}(T)\,.
 \end{equation}
In order to prove the assert at $t\in (0,T)$ and, e.g.,  for $\lli zt$,  we pick a sequence $(t_n)_n\subset [0,T]\setminus  \tilde{\mathrm{J}}$ with $t_n\uparrow t$ as $n\to\infty$, so that $z(t_n) \weaksto \lli zt$ in $L^\infty(\GC)$ (cf.\ Definition \ref{def:sigma-d-reg}).
From \eqref{partial-stability} we have that $\rstab {t_n}{z(t_n)} =0$ for all $n\in \N$. 
With the very same arguments as in the proof of Lemma \ref{l:closure-brittle}, it can be shown that 
$\rstabname$ is lower semicontinuous w.r.t.\ the weak$^*$-topology of   $[0,T] \times \spz$. 
 Thus, we conclude that $\rstab {t}{z(t)} =0$.
\par
From \eqref{stab-at-jumps} we clearly conclude that $(u,z)$ fulfills the stability condition \eqref{stab-VE} at all $t \in [0,T] \setminus \jump z$, which in particular yields
 the minimality property \eqref{MINIMALITY}  at all $t\in [0,T]\setminus \jump z$. All in all, 
\eqref{MINIMALITY} holds at every  $t\in [0,T] \setminus \bar{\rmJ}$ with $\bar{\rmJ} = \tilde\rmJ \cap \jump z$. 
\par\noindent
 $\vartriangleright$ \textbf{Step $3$:} Let us now take the $\liminf$ as  $k\to\infty$ in the (upper) energy-dissipation estimate \eqref{enue-VE}
 for the adhesive contact system.
 We handle the terms on the left-hand side 
 by observing that 
 \[
 \liminf_{j\to\infty} \enetk {k_j}{t}{u_{k_j}(t)}{z_{k_j}(t)} \geq \enet t{u(t)}{z(t)}
 \text{ and } \liminf_{j\to\infty} \Vari {\mdsn{\spz},\vecostname}{z_{k_j}}0{t} \geq \Vari {\mdsn{\spz},\vecostname}{z}0{t}
  \quad \text{for all } t \in [0,T],
 \]
 where the first inequality is
due to  the $\Gamma$-$\liminf$ 
 estimate \eqref{Gamma-adh-bri}, 
 and the second one follows from Proposition \ref{prop:6.2} below. 
As for the right-hand side, we observe that 
\[
\begin{aligned}
\partial_t \enetk {k_j}{t}{u_{k_j}(t)}{z_{k_j}(t)} =   - \pairing{}{H^1}{\dot\ell(t)}{u_{k_j}(t)+w_\Dir(t)} - \pairing{}{H^1}{\ell(t)}{\dot{w}_\Dir(t)}
\to  &  - \pairing{}{H^1}{\dot\ell(t)}{u(t)+w_\Dir(t)} - \pairing{}{H^1}{\ell(t)}{\dot{w}_\Dir(t)} \\ & = \partial_t \enet t{u(t)}{z(t)} 
\end{aligned}
\]
for every $t\in [0,T]$, with $|\partial_t \enetk {k_j}{t}{u_{k_j}(t)}{z_{k_j}(t)}| \leq C $ by \eqref{ass-ell-bri} and the 
previously obtained bound for $(u_{k_j})_j$  in $L^\infty(0,T;H_{\Gdir}^1(\Omega{\setminus}\GC;\R^d))$. 
Then,
\begin{equation}
\label{cvg-powers}
\lim_{j\to\infty}\int_0^t 
\partial_t \enetk {k_j}{s}{u_{k_j}(s)}{z_{k_j}(s)} \dd s  = \int_0^t  \partial_t \enet s{u(s)}{z(s)} \dd s   \quad \text{for all } t \in [0,T],
\end{equation}
and we thus conclude  the upper energy-dissipation estimate \eqref{enue-VE} for the brittle system.
\par\noindent
 $\vartriangleright$ \textbf{Step $4$, conclusion of the proof:} 
 Since we have proved the stability condition \eqref{stab-VE} and the upper
 energy-dissipation estimate  \eqref{enue-VE}, thanks to Proposition \ref{prop:charact} we conclude that $(u,z)$ is a $\VE$ solution of the brittle system. The energy convergence \eqref{energy-cv} ensues from the following standard argument:
 \[
 \begin{aligned}
 \limsup_{j\to\infty} \left( \enetk {k_j}{t}{u_{k_j}(t)}{z_{k_j}(t)}  {+} \Vari {\mdsn{\spz},\vecostname}{z_{k_j}}0{t} \right)
  &  \stackrel{(1)}{\leq} 
 \enet 0{u_0}{z_0} +\lim_{j\to\infty}  \int_0^t 
\partial_t \enetk {k_j}{s}{u_{k_j}(s)}{z_{k_j}(s)} \dd s
\\ &  
 \stackrel{(2)}{=} 
\enet 0{u_0}{z_0} +\int_0^t  \partial_t \enet s{u(s)}{z(s)} \dd s
\\ & 
  \stackrel{(3)}{=}  \enet t{u(t)}{z(t)} + \Vari {\mdsn{\spz},\vecostname}{z}0{t}\,,
  \end{aligned}
 \]
 with {\footnotesize (1)} due to \eqref{enbal-VE} for the adhesive system,  {\footnotesize (2)} due to \eqref{cvg-powers}, and {\footnotesize (3)} following from the  energy balance 
 \eqref{enbal-VE} for the brittle system.
 This finishes the proof of Thm.\     \ref{th:VE-brittle}. 
 \QED 
 \par
With the following result we obtain the key lower semicontinuity estimate for the total variation functionals exploited in Step $3$ of the proof of  Thm.\     \ref{th:VE-brittle}. 
\begin{proposition}
\label{prop:6.2}
   Assume  \eqref{geometry-brittle}, \eqref{WLOG-brittle},  and \eqref{ass-ell-bri}. Let 
   $(z_k)_k, \, z \subset L^\infty (\GC{\times}(0,T)) \cap \BV ([0,T];L^1(\Omega))$
   fulfill
   \begin{subequations}
   \label{conds-4-lsc}
   \begin{align}
      \label{conds-4-lsc-2}
   & z_k(t) \weaksto z(t) \text{ in } L^\infty(\GC)  \text{ for all } t \in [0,T],
   \\
   & 
      \label{conds-4-lsc-3}
   \forall\, t \in \mathrm{J}_z \  \exists\, (\alpha_{k})_k,\, (\beta_{k})_k \subset [0,T] \text{ such that }
\begin{cases}
 \alpha_{k}\uparrow t \text{ and } z_{k}(\alpha_{k}) \weaksto \lli zt \text{ in } L^\infty(\GC),
 \\
 \beta_{k}\downarrow t \text{ and } z_{k}(\beta_{k}) \weaksto \rli zt \text{ in } L^\infty(\GC).
 \end{cases}
\end{align}
   \end{subequations}
   Then,
   \begin{equation}
   \label{ineq:key-lsc}
   \liminf_{k\to\infty}   \Vari {\mdsn{\spz},\vecostname}{z_{k}}a{b} \geq  \Vari {\mdsn{\spz},\vecostname}{z}a{b} \quad \text{for all } [a,b]
   \subset [0,T].
   \end{equation}
\end{proposition}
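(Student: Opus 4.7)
The strategy is to decompose $\Vari{\mdsn{\spz},\vecostname}{z}{a}{b}$ into a ``continuous'' (between-jumps) contribution and a finite jump contribution, and to recover each piece in the limit $k\to\infty$ using pointwise weak-$^*$ convergence \eqref{conds-4-lsc-2} and the refined jump approximation \eqref{conds-4-lsc-3}. Throughout, additivity of the augmented total variation will be combined with the pair $\bigl(z_k(\alpha_k),z_k(\beta_k)\bigr)$ from \eqref{conds-4-lsc-3} and, crucially, with $z_k(t)$ itself (whose weak-$^*$ limit is $z(t)$ by \eqref{conds-4-lsc-2}) to split each jump interval around $t\in\jump z$ into two halves, so that the jump cost $\vecost{t}{\lli z t}{z(t)} + \vecost{t}{z(t)}{\rli z t}$ can be recovered piece by piece.

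\textbf{Reduction.} Since $\jump z$ is at most countable and $\sum_{t\in\jump z\cap[a,b]}\bigl[\vecost t{\lli zt}{z(t)}+\vecost t{z(t)}{\rli zt}\bigr]\le \Vari{\mdsn{\spz},\vecostname}{z}{a}{b}<\infty$, for any $\eta>0$ I will fix a finite set $\{t_1<\dots<t_N\}\subset \jump z\cap(a,b)$ accounting for all but at most $\eta$ of the jump mass, together with auxiliary points $a\le s_0'<t_1<s_1<s_1'<t_2<\dots<s_N'\le b$ with $s_i,s_i'\notin\jump z$, such that
\[
\Vari{\mdsn{\spz},\vecostname}{z}{a}{b}-\eta
\;\le\; \sum_{i=0}^{N}\Vari{\mdsn{\spz}}{z}{s_i'}{s_{i+1}}
\;+\;\sum_{j=1}^N\!\bigl[\vecost{t_j}{\lli z{t_j}}{z(t_j)}+\vecost{t_j}{z(t_j)}{\rli z{t_j}}\bigr]+(\text{boundary terms at }a,b),
\]
with the convention $s_{N+1}=b$, and analogous boundary contributions at $t_0=a$, $t_{N+1}=b$ handled in the same way as the interior jumps.

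\textbf{Between-jumps intervals.} On each $[s_i',s_{i+1}]$ the limit $z$ is $\sigma_\spz$-continuous at the endpoints, so given any sub-partition $s_i'=\sigma_0<\dots<\sigma_M=s_{i+1}$ one has $z_k(\sigma_m)\wsigmaz z(\sigma_m)$ by \eqref{conds-4-lsc-2}. The weak-$^*$ lower semicontinuity of $\mdsn{\spz}$ (which reduces here to lower semicontinuity of $\calR(\cdot)$ on $L^\infty(\GC)$-bounded sequences) gives
\[
\liminf_{k\to\infty}\sum_m \mdsn{\spz}(z_k(\sigma_{m-1}),z_k(\sigma_m))\;\ge\;\sum_m \mdsn{\spz}(z(\sigma_{m-1}),z(\sigma_m)).
\]
Taking the supremum over sub-partitions and using that $\Vari{\mdsn{\spz},\vecostname}{z_k}{s_i'}{s_{i+1}}\ge\Vari{\mdsn{\spz}}{z_k}{s_i'}{s_{i+1}}$, this yields the between-jumps liminf inequality.

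\textbf{Jump contributions (the main obstacle).} At each selected jump $t_j$ I invoke \eqref{conds-4-lsc-3} to pick $\alpha_k^j\uparrow t_j$ and $\beta_k^j\downarrow t_j$ with $z_k(\alpha_k^j)\wsigmaz \lli z{t_j}$ and $z_k(\beta_k^j)\wsigmaz \rli z{t_j}$, and I \emph{also} use that $z_k(t_j)\wsigmaz z(t_j)$ by \eqref{conds-4-lsc-2}. Additivity of the augmented total variation gives
\[
\Vari{\mdsn{\spz},\vecostname}{z_k}{\alpha_k^j}{\beta_k^j}=\Vari{\mdsn{\spz},\vecostname}{z_k}{\alpha_k^j}{t_j}+\Vari{\mdsn{\spz},\vecostname}{z_k}{t_j}{\beta_k^j},
\]
so it suffices to establish the two ``half-jump'' inequalities
\[
\liminf_{k\to\infty}\Vari{\mdsn{\spz},\vecostname}{z_k}{\alpha_k^j}{t_j}\;\ge\;\vecost{t_j}{\lli z{t_j}}{z(t_j)},\qquad
\liminf_{k\to\infty}\Vari{\mdsn{\spz},\vecostname}{z_k}{t_j}{\beta_k^j}\;\ge\;\vecost{t_j}{z(t_j)}{\rli z{t_j}}.
\]
The idea is that the restriction $z_k|_{[\alpha_k^j,t_j]}$, re-interpreted on a compact subset $E_k\subset[\alpha_k^j,t_j]$ obtained by excising the (at most countable) interior $\jump{z_k}$-points and inserting $\delta_\spz$-gap terms there, furnishes an admissible $\mathrm{C}_{\sigma_\spz,\mdsn{\spz}}$-transition curve joining $z_k(\alpha_k^j)$ to $z_k(t_j)$ whose transition cost at time $t_j$ is controlled, after correcting by residual terms, by the augmented variation. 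Since $(u_k,z_k)$ is a $\VE$ solution of the adhesive system, $\rstabtk{k}{s}{z_k(s)}=0$ outside $\jump{z_k}$, and the jump-cost contributions in the augmented variation match exactly the optimal jump transition costs at each interior jump of $z_k$. The required lower bound on $\vecost{t_j}{\lli z{t_j}}{z(t_j)}$ is then a direct consequence of the Kuratowski-type liminf inequality \eqref{liminf-Rk} for residuals combined with the $\Gamma$-$\liminf$ estimate \eqref{Gamma-adh-bri} for energies; this lower-semicontinuity property for transition costs along the adhesive-to-brittle limit is precisely what Lemma \ref{prop:key-lsc-adh2bri} provides.

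\textbf{Conclusion.} Summing the between-jumps estimates, the two half-jump inequalities at each $t_j$, and the analogous boundary contributions at $a,b$, one obtains
\[
\liminf_{k\to\infty}\Vari{\mdsn{\spz},\vecostname}{z_k}{a}{b}\;\ge\;\Vari{\mdsn{\spz},\vecostname}{z}{a}{b}-\eta.
\]
Letting $\eta\downarrow 0$ concludes the proof of \eqref{ineq:key-lsc}. The main analytical obstacle is the jump-by-jump recovery in the third paragraph, since neither the transition cost $\vecostname$ nor the residual $\rstabname$ is continuous with respect to the time parameter under weak-$^*$ convergence, and the time of evaluation ($t_j$) differs from the ``internal'' time variable of the approximating curve $z_k|_{[\alpha_k^j,t_j]}$; it is exactly this mismatch that is resolved by Lemma \ref{prop:key-lsc-adh2bri} and its proof in Section \ref{ss:6.2}.
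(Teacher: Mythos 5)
Your route is genuinely different from the paper's. The paper does not partition $[a,b]$ by hand: it regards $\eta_k([a,b]):=\Vari{\mdsn{\spz},\vecostname}{z_k}{a}{b}$ as non-negative Borel measures, extracts a weak$^*$ limit $\eta$, and then shows separately that $\eta$ dominates the diffuse part $\Vari{\mdsn{\spz}}{z}{\cdot}{\cdot}$ (via upper semicontinuity of weak$^*$ convergence on closed sets and lower semicontinuity of the plain $\mdsn{\spz}$-variation) and that each atom $\eta(\{t\})$ dominates the jump cost (via Lemma \ref{prop:key-lsc-adh2bri}), deferring the final assembly to the arguments of \cite[Thm.\ 4]{RS17} and \cite[Prop.\ 7.3]{MRS13}. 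Your partition-based decomposition avoids the measure-theoretic machinery at the price of the bookkeeping with the auxiliary points $s_i,s_i'$ (you should also arrange that the excised neighbourhoods of the selected jumps carry at most $\eta$ of the \emph{continuous} part of the variation of $z$, which is possible since that part is non-atomic); this is a legitimate and arguably more elementary alternative.

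There is, however, one genuine gap in your jump step. The limit's augmented variation charges each interior jump with $\vecost{t}{\lli zt}{z(t)}+\vecost{t}{z(t)}{\rli zt}$, which by concatenation is $\geq \vecost{t}{\lli zt}{\rli zt}$ and can be strictly larger. Your two ``half-jump'' inequalities are therefore exactly what you need, but they are \emph{not} what Lemma \ref{prop:key-lsc-adh2bri} states: the lemma only yields $\liminf_k \Vari{\mdsn{\spz},\vecostname}{z_k}{\alpha_k}{\beta_k}\geq\vecost{t}{\lli zt}{\rli zt}$, i.e.\ the combined cost, so quoting it as a ``direct consequence'' overstates it. To close the gap you must rerun the construction of Section \ref{ss:6.2} (reparameterization, gluing of optimal jump transitions, the compactness result of \cite[Thm.\ 5.4]{SavMin16}, and the $\Gamma$-$\liminf$ estimate \eqref{liminf-Rk}) on each half-interval $[\alpha_k^j,t_j]$ and $[t_j,\beta_k^j]$ separately, using \eqref{conds-4-lsc-2} to identify the missing endpoint: $z_k(t_j)\weaksto z(t_j)$ plays the role that $z_k(\beta_k)\weaksto\rli z{t}$ plays in the lemma. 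Nothing in that proof obstructs this adaptation, but it has to be stated and carried out; as written, your middle paragraph asserts the conclusion rather than derives it. (The paper faces the same issue and resolves it inside the measure-theoretic argument it borrows from \cite{RS17,MRS13}, where the interval $[\alpha_k,\beta_k]$ is likewise split at $t$.)
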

The \emph{proof} follows the very same lines as the argument for \cite[Thm.\ 4]{RS17}, to which  we shall refer for all details. 
Let us just outline it: up to an extraction we may suppose that $\sup_{k\in \N} \Vari {\mdsn{\spz},\vecostname}{z_{k}}0{T} \leq C$.
Therefore,
the non-negative and bounded Borel measures $\eta_k$ on $[0,T]$ defined by
$\eta_k([a,b]): = \Vari {\mdsn{\spz},\vecostname}{z_{k}}a{b} $ for all $[a,b]\subset[0,T]$ 
weakly$^*$ converge (in the duality with $\rmC^0([0,T])$) to a measure $\eta$. We observe that 
\begin{equation}
\label{2diffuse}
\eta([a,b]) \stackrel{(1)}{\geq} \limsup_{k\to\infty} \eta_k([a,b]) \stackrel{(2)}{\geq}  \limsup_{k\to\infty}
\Vari {\mdsn{\spz}}{z_{k}}a{b}  \stackrel{(3)}{\geq}\Vari {\mdsn{\spz}}{z}a{b},
\end{equation}
with {\footnotesize (1)} due to the upper semicontinuity of the weak$^*$ convergence of measures on closed sets, 
 {\footnotesize (2)} due to the fact that $\Varname{\mdsn{\spz},\vecostname} \geq \Varname{\mdsn{\spz}}  $, and 
 {\footnotesize (3)} due to  \eqref{conds-4-lsc-2}. 
 It follows from Lemma \ref{prop:key-lsc-adh2bri} ahead that, at any $t\in \mathrm{J}_z$ and for all  sequences
 $(\alpha_k)_k,\, (\beta_k)_k \subset [0,T]$
 fulfilling   \eqref{conds-4-lsc-3} there holds
 \begin{equation}
\label{2jump}
 \eta(\{ t\}) \geq  \limsup_{k\to\infty} \eta_k([\alpha_k,\beta_k]) \geq \vecost t{\lli zt}{\rli zt}\,.
 \end{equation}
 Combining \eqref{2diffuse}
 and \eqref{2jump} and arguing in the very same way as in the proof of \cite[Thm.\ 4]{RS17}
 (cf.\ also \cite[Prop.\ 7.3]{MRS13}), we  establish \eqref{ineq:key-lsc}.
 \QED
 We conclude this section by stating a crucial  lower estimate for the 
 Visco-Energetic total variation of a sequence $(z_k)_k$ 
 of solutions to the adhesive contact system (for notational simplicity, we 
 drop the subsequence $(k_j)_j$ and 
 revert to the  original sequence of indexes $(k)$).
  The total variation of the curves $z_k$ is considered on  a sequence of intervals shrinking as
 $k\to\infty$ to a jump point of the limit curve $z$. 
\begin{lemma}
\label{prop:key-lsc-adh2bri}
   Assume  \eqref{geometry-brittle}, \eqref{WLOG-brittle},  and \eqref{ass-ell-bri}. Let 
   $(z_k)_k, \, z \subset L^\infty (\GC{\times}(0,T)) \cap \BV ([0,T];L^1(\Omega))$
   fulfill \eqref{conds-4-lsc}. For any $t\in \mathrm{J}_z$ pick two sequences $(\alpha_k)_k$ and
   $(\beta_k)_k$ converging to $t$ and fulfilling  \eqref{conds-4-lsc}. Then,
   \begin{equation}
   \label{basta-lsc}
  \liminf_{k\to\infty} \Vari {\mdsn{\spz},\vecostname}{z_{k}}{\alpha_k}{\beta_k} \geq \vecost t{\lli zt}{\rli zt}\,.
   \end{equation}
\end{lemma}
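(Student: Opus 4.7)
The plan is to adapt the strategy from \cite[Prop.\ 7.3]{MRS13} (mentioned as the template for Prop.\ \ref{prop:6.2}): I will construct an admissible transition curve $\theta_z : E \to Z$ between $\lli zt$ and $\rli zt$ at time $t$, with $E \Subset \mathbb{R}$, whose $\VE$ transition cost $\tcost{\VE}{t}{\theta_z}{E}$ is dominated by the liminf in \eqref{basta-lsc}. Without loss of generality, set $L := \liminf_k \Vari{\mdsn{\spz},\vecostname}{z_k}{\alpha_k}{\beta_k} < \infty$ and extract a subsequence realizing it.

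First, for each integer $M \geq 1$ and each $k$, I would choose intermediate instants $\alpha_k = s_k^0 < s_k^1 < \ldots < s_k^M = \beta_k$, refined so that no $s_k^i$ is a jump point of $z_k$ (hence $(s_k^i, z_k(s_k^i)) \in \mathscr{S}_{\cmdsn{\spz}}^k$) and so that the restrictions of $z_k$ to each $[s_k^{i-1}, s_k^i]$ decompose in terms of the optimal jump transitions of \cite[Thm.\ 3.14]{SavMin16}. The uniform bound $L < \infty$ together with $\|z_k\|_{L^\infty(\GC \times [0,T])} \leq 1$ gives a uniform bound on $\Vari{\mdsn{\spz}}{z_k}{\alpha_k}{\beta_k}$; by Helly's theorem and a diagonal extraction, a (not relabeled) subsequence satisfies $z_k(s_k^i) \weaksto \theta^i$ in $L^\infty(\GC)$ for each $i = 0, \ldots, M$, with $\theta^0 = \lli zt$ and $\theta^M = \rli zt$ by hypothesis \eqref{conds-4-lsc-3}.

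Second, from the additivity of the augmented total variation and the structure of $\VE$ solutions arising from \eqref{tim-VE} (so that jumps of $z_k$ are traversed by optimal jump transitions carrying, on each hole, the viscous correction $\delta_\spz$), one obtains the partition-based lower bound
\begin{equation*}
\Vari{\mdsn{\spz},\vecostname}{z_k}{\alpha_k}{\beta_k} \geq \sum_{i=1}^M \bigl(\mds{\spz}{z_k(s_k^{i-1})}{z_k(s_k^i)} + \delta_\spz(z_k(s_k^{i-1}), z_k(s_k^i))\bigr) + \sum_{i=0}^{M-1} \rstabtk{k}{s_k^i}{z_k(s_k^i)},
\end{equation*}
which is precisely the discrete analogue of $\tcost{\VE}{t}{\cdot}{E}$ over the sample points $\{s_k^i\}_{i=0}^M$. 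Passing to the liminf as $k \to \infty$, I would use: (i) the lower semicontinuity of $\mdsn{\spz}$ under weak$^*$ convergence in $L^\infty(\GC)$ (a Fatou-type argument on $\calR$) to get $\liminf_k \mds{\spz}{z_k(s_k^{i-1})}{z_k(s_k^i)} \geq \mds{\spz}{\theta^{i-1}}{\theta^i}$; (ii) the key $\liminf$-estimate \eqref{liminf-Rk} from Lemma \ref{l:closure-brittle}, combined with $s_k^i \to t$, to conclude $\liminf_k \rstabtk{k}{s_k^i}{z_k(s_k^i)} \geq \rstab{t}{\theta^i}$; (iii) the lower semicontinuity of $\delta_\spz$, inherited from the continuity of $h$ and the lower semicontinuity of $\mdsn{\spz}$, for the viscous gap contribution.

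Finally, defining the transition curve $\theta_z : E \to Z$ on $E = \{0, 1, \ldots, M\}$ by $\theta_z(i) := \theta^i$, each of the three terms making up $\tcost{\VE}{t}{\theta_z}{E} = \Vars{\mdsn{\spz}}{\theta_z}{E} + \Gap{\delta_\spz}{\theta_z}{E} + \sum_{i=0}^{M-1} \rstab{t}{\theta^i}$ is dominated by the corresponding sum on the right-hand side of the display, so $\tcost{\VE}{t}{\theta_z}{E} \leq L$. Taking the infimum over admissible transitions gives $\vecost{t}{\lli zt}{\rli zt} \leq L$, which is \eqref{basta-lsc}. The main technical obstacle will be justifying the displayed partition-based lower bound; this requires a careful bookkeeping of the contributions of $\mathrm{J}_{z_k}$ inside each $[s_k^{i-1}, s_k^i]$, using that on subintervals $\Vari{\mdsn{\spz},\vecostname}{z_k}{s_k^{i-1}}{s_k^i}$ is at least $\mds{\spz}{z_k(s_k^{i-1})}{z_k(s_k^i)} + \delta_\spz(z_k(s_k^{i-1}), z_k(s_k^i))$ (the viscous correction captured from the holes of $E$), while the residual terms at each $s_k^i$ vanish identically when $s_k^i \notin \mathrm{J}_{z_k}$ and can therefore be freely inserted in the estimate before letting $k \to \infty$.
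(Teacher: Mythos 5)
There is a genuine gap, and it sits exactly where you flag it: the displayed ``partition-based lower bound'' is false, and the overall finite-sampling strategy cannot be repaired to yield \eqref{basta-lsc}. The augmented total variation is, by \eqref{augm-tot-var}--\eqref{jump-Delta-e}, the ordinary variation $\Vari{\mdsn{\spz}}{z_k}{\alpha_k}{\beta_k}$ plus the \emph{increments} $\Delta_{\vecostname}$ at the actual jump points of $z_k$; the only $\corrsn{\spz}$-contributions it contains are those coming from the holes of the optimal jump transitions of $z_k$ at its own jump times, evaluated at the values of \emph{those transitions} at the holes' endpoints. Nothing forces $\Vari {\mdsn{\spz},\vecostname}{z_{k}}{s_k^{i-1}}{s_k^i}$ to dominate $\corrs{\spz}{z_k(s_k^{i-1})}{z_k(s_k^i)}$: if $z_k$ moves continuously and monotonically on $[s_k^{i-1},s_k^i]$ with $\Vari{\mdsn{\spz}}{z_k}{s_k^{i-1}}{s_k^i}=\mds{\spz}{z_k(s_k^{i-1})}{z_k(s_k^i)}$, the left-hand side equals $\mds{\spz}{z_k(s_k^{i-1})}{z_k(s_k^i)}$ while your right-hand side adds the strictly positive term $h(\mds{\spz}{z_k(s_k^{i-1})}{z_k(s_k^i)})$. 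Since your transition set $E=\{0,\dots,M\}$ is finite, \emph{every} consecutive pair is a hole, so $\Gap{\delta_\spz}{\teta_z}{E}$ necessarily charges $\corrs{\spz}{\teta^{i-1}}{\teta^i}$ for all $i$, and your candidate transition's cost is therefore not controlled by $L$. (The residual terms are harmless but also vacuous: since $s_k^i\notin\jump{z_k}$ they vanish for each $k$, so by \eqref{liminf-Rk} the limit points $\teta^i$ are all stable and contribute nothing.)

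The deeper issue is structural: the jump of $z$ at $t$ arises as the concentration of possibly countably many jumps and continuous pieces of $z_k$ on the shrinking intervals $[\alpha_k,\beta_k]$, and the limiting transition that realizes (or nearly realizes) $\vecost t{\lli zt}{\rli zt}$ generally lives on a compact set with infinitely many holes, with the residual-stability contribution coming from the viscous, pure-jump parts of the optimal transitions of $z_k$ at its jump points --- none of which is visible to a fixed finite sample. The paper instead performs an arc-length-type reparameterization of $z_k$ on $[\alpha_k,\beta_k]$ (incorporating time, the augmented variation, and a counting weight $\sum_m 2^{-m}$ over the jumps of $z_k$), glues in suitably rescaled optimal jump transitions $\ojt zmk$ on the images of the jump gaps, and then invokes the compactness theorem \cite[Thm.\ 5.4]{SavMin16} for curves on varying compact sets (Kuratowski convergence of the domains $\mathfrak{C}_k$, approximation of holes). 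Only after that does it prove lower semicontinuity of the three pieces of $\tcost{\VE}{t}{\cdot}{\cdot}$ separately --- using \eqref{liminf-Rk} precisely at the jump times $t_m^k$, where $\rstabnamek{k}$ is positive --- and concludes by comparing with the infimum defining $\vecostname$. Your proposal retains the last comparison step but replaces the reparameterization-and-compactness core with a sampling argument that does not produce a transition of admissible cost; to fix it you would essentially have to reconstruct the paper's Steps 1--3.
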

The  \emph{proof} will be given in Sec.\ \ref{ss:6.2}. 

\subsection{Proof of Lemma \ref{prop:key-lsc-adh2bri}}
\label{ss:6.2}
 Let us  briefly outline the proof, partially borrowed from that of \cite[Prop.\ 3]{RS17}:
\begin{compactenum}
\item
for every $k\in \N$,  the curve $z_k$ has countably many jump points $(t_m^k)_{m\in M_k}$ between  $\alpha_k$ and $\beta_k$.
 Along the footsteps of \cite{RS17}, we will suitably reparameterize both the continuous pieces of the trajectory $z_k$ and 
  the optimal transitions 
$\ojt zmk$ connecting the left and right limits $\lli {z_k}{t_m^k}$ and  $\rli {z_k}{t_m^k}$ at a jump point $t_m^k$. We will then glue
the (reparameterized) continuous pieces  and  the (reparameterized) jump transitions
together. 
\item
In this way, we shall  obtain a sequence of curves $(\invcur k)_k$, defined on  compact sets $(\mathfrak{C}_k)_k$, 
to which we will apply a refined compactness argument from \cite{SavMin16}, yielding the existence of 
a limiting Lipschitz  curve $\invcu$, defined on a compact set $\mathfrak{C} \Subset \R$, connecting the left and the right limits $\lli zt$ and $\rli zt$. 
\item We will then show that 
\begin{equation}
\label{crucial2show}
  \liminf_{k\to\infty} \Vari {\mdsn{\spz},\vecostname}{z_{k}}{\alpha_k}{\beta_k}  \geq \tcost{\VE}{t}{\invcu}{\mathfrak{C}}\,.
\end{equation}
\item From \eqref{crucial2show} we shall
conclude \eqref{basta-lsc}. 
\end{compactenum}
\par\noindent
$\vartriangleright $  \textbf{Step $1$ (reparameterization):}
We set 
\[
\newmresc k: = \beta_k -\alpha_k +  \Vari{\mdsn{\spz},\vecostname}{z_{k}}{\alpha_k}{\beta_k}+ \sum_{m \in M_k} 2^{-m}\,
\]
and define the  rescaling function $\nresc k : [\alpha_k, \beta_k]\to [0,\newmresc k]$   by 
 \[
 \nresc k(t): 
 = t-\alpha_k +    \Vari{\mdsn{\spz},\vecostname}{z_{k}}{\alpha_k}{t}+ \sum_{\{m \in M_k :\, t_m^k \leq t\}} 2^{-m}\,.
 \]
 Observe that $\nresc k $ is strictly increasing, with jump set $\jump{\nresc k} = (t_m^k)_{m\in M_k}$. We set
 \[
 I_m^k: =  (\nresc k(t_m^k-), \nresc k(t_m^k+)),
  \quad I_k: = \cup_{m\in M_k}  I_m^k, \quad  \Lambda_k: =[ \nresc k (\alpha_k), \nresc k (\beta_k)]. 
 \]
 On   $\Lambda_k\setminus I_k$ 
    the inverse $\ninvresc k: \Lambda_k \setminus I_k \to [\alpha_k, \beta_k]$ 
    of $\nresc k $ 
    is well defined and Lipschitz continuous. We introduce
    \begin{equation}
    \label{invcurk-1}
    \invcur k (s): = (u_k \circ \ninvresc k)(s) \qquad \text{for all } s \in \Lambda_k \setminus I_k
    \end{equation}
    and observe that $\invcur k$ is Lipschitz as well. 
    \par
    We now reparameterize the `jump pieces' of the trajectory. Recall that at every 
    jump point $t_m^k$ there exists an optimal jump transition $\ojt zmk \in \mathrm{C}_{\sigma_\spz,\mdsn{\spz}}(E_m^k;\spz)$,
    fulfilling 
\begin{equation}
\label{tight-OJT}
\begin{aligned}
&
\lli z{t_m^k} = \ojt zmk ((E_m^k)^-), \qquad \rli z{t_m^k} = \ojt zmk ((E_m^k)^+), \qquad z(t_m^k)\in \ojt zmk(E_m^k), \\
&
\enetk k{t_m^k}{\lli u{t_m^k}}{\lli z{t_m^k}} - \enetk k{t_m^k}{\rli u{t_m^k}} {\rli z{t_m^k}}    = \vecost{t_m^k}{\lli z{t_m^k}}{\rli z{t_m^k}}  = \tcost{\VE}{t_m^k}{\ojt zmk}{E_m^k}\,.
 \end{aligned}
\end{equation}
We define the rescaling function $\rresc mk $ on $ E_m^k $ 
 by
\[
\begin{aligned}
 \rresc mk (t): = &  \frac1{2^m} \frac{t-(E_m^k)^-}{(E_m^k)^+ - (E_m^k)^-} +  \Vars {\mdsn{\spz}}{\ojt zmk} {E_m^k \cap [(E_m^k)^-,t]} 
 \\ & 
 + \Gap{\delta_\spz}{\ojt zmk} {E_m^k \cap [(E_m^k)^-,t]}  +\sum_{r\in  [(E_m^k)^-,t]\setminus (E_m^k)^+} 
  \rstabtk k{t_m^k}{\ojt zmk(r)}+\nresc k(t_m^k-) \quad \text{for all $ t \in E_m^k$.}
 \end{aligned}
\]
It can be checked that $\rresc mk$ is continuous and strictly increasing,
with  image a compact set  $S_m^k\subset I_m^k $ such that 
$
(S_m^k)^\pm =  \rresc mk ((E_m^k)^\pm) =\nresc k(t_m^k\pm) $. 
Its inverse function   $\siresc mk :  S_m^k  \to E_m^k$ is Lipschitz continuous.
\par
Finally, we introduce the \emph{compact} set
\[
\mathfrak{C}_k: = (\Lambda_k {\setminus} I_k) \cup (\cup_{m\in M_k} S_m^k) \subset \Lambda_k \subset [0,\newmresc k]
\]
and
 extend  the functions $\ninvresc k$ and 
 $\invcur k $,  so far defined on $\Lambda_k \setminus I_k$, only, to the set $\mathfrak{C}_k$ 
by setting
\[
\ninvresc k(s) \equiv t_m^k \quad\text{and} \quad
\invcur k (s) : = \teta_m^k( \siresc mk(s)) \qquad \text{whenever } s \in S_m^k \text{ for some  } m \in M_k. 
\] 
It has been checked 
in \cite{RS17}
that 
the extended curve 
 $\invcur k $ is in $ \mathrm{C}_{\sigma_\spz,\mdsn{\spz}}(\mathfrak{C}_k;\Xs)  \cup \BV_{\mdsn{\spz}}(\mathfrak{C}_k;\Xs)$, with
 \begin{equation}
 \label{variations-k}
 \begin{aligned}
 &
\Vari{\mdsn{\spz}}{\invcur k}{s_0}{s_1} \leq 
\Vari{\mdsn{\spz}}{z_k}{\ninvresc k(s_0)}{\ninvresc k(s_1)} +(\ninvresc k(s_1){-}\ninvresc k(s_0))  \quad \text{for all } s_0,\, s_1 \in \Lambda_k \setminus I_k \text{ with } s_0<s_1,
  \\
  & 
\Vars {\mdsn{\spz}}{\invcur k} {S_m^k}=
 \Vars {\mdsn{\spz}}{\ojt zmk} {E_m^k} , \qquad  \Gap{\delta_\spz}{\invcur k} {S_m^k} =\Gap{\delta_\spz}{\ojt zmk} {E_m^k}, 
 \\
 &
 \sum_{s\in S_m^k{\setminus}\{(S_m^k)^+\}}  \rstabtk k {t_m^k}{\invcur k(s)} =  \sum_{r\in E_m^k{\setminus}\{(E_m^k)^+\}} 
 \rstabtk k{t_m^k}{\ojt zmk(r)}.
\end{aligned}
 \end{equation}
\par\noindent
$\vartriangleright $ {\bf Step $2$  (a priori estimates and compactness):}
 We refer to the proof of \cite[Prop.\ 3]{RS17} for the calculations leading to these a priori estimates:
 \begin{equation}
 \label{a-prio-ests}
 \exists\, \overline{C}>0 \ \forall\, k \in \N \, : \qquad 
 \begin{cases}
 \mathfrak{C}_k^+\leq \overline{C},
\\
 \Vars{\mdsn \spz}{\invcur k}{\mathfrak{C}_k} \leq \overline{C},
 \\
 \Vars{\mdsn\spz}{\invcur k}{\mathfrak{C}_k \cap [s_0,s_1]} \leq (s_1{-}s_0) \quad \text{for all } s_0,s_1 \in \mathfrak{C}_k \text{ with } s_0<s_1,
 \\
 \sup_{s\in \mathfrak{C}_k} \pertok k  {\mathfrak{u}_k (s)}{\invcur k(s)}\leq \overline{C},
 \end{cases}
 \end{equation}
 where $\mathfrak{u}_k(s) $  is the unique element in  $ \mathrm{Argmin}_{u\in \spu} \enetk k{s}{u}{\invcur k(s)}$ and $\pertokname k$ the perturbed functional associated with 
 $\calE_k$ via 
 \eqref{pert-func}.
 \par
 Therefore, we are in a position to apply the compactness result from 
 \cite[Thm.\ 5.4]{SavMin16} and conclude that 
  there exist a (not relabeled) subsequence,  a compact set   $\mathfrak{C}\subset [0, \overline{C}]$ with $\overline C$ as in 
  \eqref{a-prio-ests},
  and a function $\invcu \in \mathrm{C}_{\sigma_\spz, \mdsn{\spz}}(\mathfrak{C};\Xs)$ such that, as $k\to\infty$, there hold
\begin{compactenum}
\item $\mathfrak{C}_k \to \mathfrak{C}$ \`a la Kuratowski, namely
\[
\begin{aligned}
&
\mathrm{Li}_{k\to\infty} \mathfrak{C}_k = 
\mathrm{Ls}_{k\to\infty} \mathfrak{C}_k  =\mathfrak{C}
\quad \text{
with }
\\
& \quad
\mathrm{Li}_{k\to\infty} \mathfrak{C}_k:= \{ t \in [0,\infty)\, : \exists\, t_k \in \mathfrak{C}_k  \text{ s.t. } t_k\to t\},
\\
& \quad
\mathrm{Ls}_{k\to\infty} \mathfrak{C}_k := \{ t \in [0,\infty)\, : \exists\,
j\mapsto k_j \text{ increasing and } t_{k_j} \in \mathfrak{C}_{k_j} 
  \text{ s.t. } t_{k_j}\to t\};
  \end{aligned}
\]
\item 
for every $s\in \mathfrak{C}$ there exists a sequence $(s_k)_k$, with $s_k\in \mathfrak{C}_k$ for all $k\in \N$, such that 
$s_k\to s$ and $\invcur k(s_k) \wsigmaz \invcu (s)$ in $\spz$ as $k\to\infty$; 
\item whenever $s_{k} \in  \mathfrak{C}_k$ converge to $s\in \mathfrak{C}$, then $\invcur {k}(s_{k}) \wsigmaz  \invcu(s)$ in $\spz$;
\item 
$\invcur {k}((\mathfrak{C}_k)^\pm )\wsigmaz \invcu(\mathfrak{C}^\pm)$;
\item for every $I \in \hole {\mathfrak{C}}$ (recall \eqref{holes}) there exists a sequence $(J_k)_k$ with 
\begin{equation}
\label{approximation-holes}
J_k \in \hole{\mathfrak{C}_k} \text{ for all } k \in \N \text{ and } J_k^+\to I^+, \ J_k^-\to I^-\,.
\end{equation}
\end{compactenum}
Therefore, $\invcu(C^-) =  \lli z t$, and $\invcu(C^+) = \rli z t$. 
Finally, for later use    we observe that
\begin{equation}
\label{singleton}
\lim_{k\to\infty}  \sup_{s\in \mathfrak{C}_k} | \ninvresc k(s)- t | =0, 
\end{equation} 
since 
  the functions $\ninvresc k$ take values in the intervals $[\alpha_k,\beta_k]$ shrinking to the singleton $\{t\}$.
\par\noindent
$\vartriangleright $
{\bf Step $3$ (proof of  \eqref{crucial2show}):}
Repeating the very same arguments as in the proof of  \cite[Thm.\ 5.3]{SavMin16}, 
from the above convergence properties 
we conclude 
\begin{equation}
\label{liminf-total-variations}
\Vars{\mdsn{\spz}}{\invcu}{\mathfrak{C}} \leq \liminf_{k\to\infty}\Vars{\mdsn{\spz}}{\invcur k}{\mathfrak{C}_k}\,.
\end{equation}
\par
Let us now address the term in the transition cost  involving the residual stability function.
To this end, we fix a finite set $\{ \mathfrak{C}^-= :\sigma^0<\sigma^1<\ldots <\sigma^N : = \mathfrak{C}^+\} \subset \mathfrak{C}$ such that 
$\rstabt {\sigma^n}{\invcu(\sigma^n)}>0$ for all $n=1,\ldots, N-1$.
We use that for every $n\in \{1,\ldots, N\}$ there exists a sequence $(\sigma_k^n)_k$ with 
$\sigma_k^n \in \mathfrak{C}_k$ for all $k\in \N$, $\sigma_k^n\to \sigma^n$ and $\invcur k(\sigma_k^n) \wsigmaz \invcu(\sigma^n)$
as $k\to\infty$. Furthermore,  in view of \eqref{singleton} we have that $ \ninvresc k(\sigma_k^n) \to t$ as $k\to\infty$ for all $n\in \{0,\ldots,N\}$. 
By the 
$\Gamma$-$\liminf$ estimate 
\eqref{liminf-Rk}, we infer that 
\[
\liminf_{k\to\infty} \rstabtk k{ \ninvresc k(\sigma_k^n)}{\invcur k(\sigma_k^n)} \geq \rstabt  {t}{\invcu(\sigma^n)} \quad \text{for all } n \in \{0,\ldots,N\},
\]
therefore there exist $c>0$ and an index $\bar k \in \N$ such that 
\[
 \rstabtk k{ \ninvresc k(\sigma_k^n)}{\invcur k(\sigma_k^n)}\geq c>0 \quad  \text{for all } n \in \{1,\ldots,N-1\}\,.
\]
This entails that 
for every
$n \in \{1,\ldots,N-1\}$ and 
 $k \geq \bar k$ there exists $m_k^n \in M_k$ (the countable set of jump points of $z_k$ between $\alpha_k$ and $\beta_k$) such that 
 $ \ninvresc k(\sigma_k^n) = t_{m_k^n}$. 
 All in all, we conclude that 
 \[
 \begin{aligned}
 \sum_{n=1}^{N-1}  \rstabt  {t}{\invcu(\sigma^n)} \leq   \sum_{n=1}^{N-1}  \liminf_{k\to\infty} 
  \rstabtk k{ t_{m_k^n}}{\invcur k(\sigma_k^n)}  & \leq \liminf_{k\to\infty}   \sum_{n=1}^{N-1}
  \rstabtk k{ t_{m_k^n}}{\invcur k(\sigma_k^n)} \\ &  \leq  \liminf_{k\to\infty}  \sum_{m\in M_k}    \sum_{s\in S_m^k{\setminus}\{(S_m^k)^+\}}  \rstabtk k {t_m^k}{\invcur k(s)}  \\ & =  \liminf_{k\to\infty}  \sum_{m\in M_k}  \sum_{r\in E_m^k{\setminus}\{(E_m^k)^+\}} 
 \rstabtk k{t_m^k}{\ojt zmk(r)},
 \end{aligned}
 \]
the latter identity due to \eqref{variations-k}. Taking the supremum of the left-hand side
over all finite subsets of $\mathfrak{C} \setminus \{\mathfrak{C}^+\}$, we then conclude that 
\begin{equation}
\label{liminf-Rk-variations}
\sum_{\sigma \in \mathfrak{C} {\setminus}  \{ \mathfrak{C}^+ \}}   \rstabt  {t}{\invcu(\sigma)} \leq  \liminf_{k\to\infty}  \sum_{m\in M_k}  \sum_{r\in E_m^k{\setminus}\{(E_m^k)^+\}} 
 \rstabtk k{t_m^k}{\ojt zmk(r)}\,.
\end{equation}
\par
Finally, 
\eqref{approximation-holes} and,
 again, the very same arguments as in the proof of  \cite[Thm.\ 5.3]{SavMin16} yield that 
\begin{equation}
\label{liminf-GapVar}
\begin{aligned}
\Gap{\mdsn\spz}\invcu C= \sum_{I\in \hole{\mathfrak{C}}} \delta_\spz(\invcu(I^-),\invcu(I^+)) 
 & \leq \liminf_{k\to\infty} \sum_{J\in \hole{\mathfrak{C}_k}} \delta_\spz(\invcur k(J^-),\invcur k(J^+)) 
 \\ & =  \liminf_{k\to\infty} \sum_{m\in M_k}  \Gap{\delta_\spz}{\invcur k} {S_m^k} \\ & \stackrel{(1)}{=}
  \liminf_{k\to\infty} \sum_{m\in M_k}  \Gap{\delta_\spz}{\ojt zmk} {E_m^k}
  \end{aligned}
\end{equation}
with {\footnotesize (1)} due to \eqref{variations-k}. 
Combining \eqref{liminf-total-variations}, 
\eqref{liminf-Rk-variations} and \eqref{liminf-GapVar}, 
we  deduce  \eqref{crucial2show}.
\par\noindent
$\vartriangleright $ {\bf Step $4$ (conclusion):} 
Observe that 
\[
\vecost {t}{\lli zt}{\rli zt} \leq  \tcost{\VE}{t}{\invcu}{\mathfrak{C}}\,.
\]
Therefore, \eqref{basta-lsc} follows from \eqref{crucial2show}. This finishes the proof of Lemma \ref{prop:key-lsc-adh2bri}.  \QED

\bibliographystyle{alpha}
\bibliography{ricky_lit}
\end{document}